\numberwithin{equation}{section}
\numberwithin{equation}{section}
\newtheorem{thm}{Theorem}[section]
\newtheorem{prop}[thm]{Proposition}
\newtheorem{lem}[thm]{Lemma}
\newtheorem{cor}[thm]{Corollary}
\newtheorem{rem}[thm]{Remark}
\newtheorem{definition}[thm]{Definition}
\newtheorem{example}[thm]{Example}
\newtheorem*{cor*}{Corollary}
\newcommand{\nc}{\newcommand}
\nc{\bA}{\mathbb A}
\nc{\bC}{\mathbb C}
\nc{\bc}{{\bf c}}
\nc{\bD}{\mathbb D}
\nc{\bd}{\mathbb d}
\nc{\bG}{\mathbb G}
\nc{\bi}{\bold i}
\nc{\bL}{\mathbb L}
\nc{\bN}{\mathbb N}
\nc{\bP}{\mathbb P}
\nc{\bQ}{\mathbb Q}
\nc{\bR}{\mathbb R}
\nc{\bu}{\mathbb u}
\nc{\bv}{\bold v}
\nc{\bw}{\bold w}
\nc{\bW}{\mathbb W}
\nc{\bZ}{\mathbb Z}
\nc{\cA}{\mathcal A}
\nc{\cC}{\mathcal C}
\nc{\cD}{\mathcal D}
\nc{\cE}{\mathcal E}
\nc{\cF}{\mathcal F}
\nc{\cG}{\mathcal G}
\nc{\cH}{\mathcal H}
\nc{\cI}{\mathcal I}
\nc{\cK}{\mathcal K}
\nc{\cL}{\mathcal L}
\nc{\cN}{\mathcal N}
\nc{\cO}{\mathcal O}
\nc{\cP}{\mathcal P}
\nc{\cT}{\mathcal T}
\nc{\cU}{\mathcal U}
\nc{\cV}{\mathcal V}
\nc{\cW}{\mathcal W}
\nc{\al}{\alpha}
\nc{\be}{\beta}
\nc{\la}{\lambda}
\nc{\La}{\Lambda}
\nc{\ve}{\varepsilon}
\nc{\om}{\omega}
\nc{\Om}{\Omega}
\nc{\gl}{\mathfrak{gl}}
\nc{\fsl}{\mathfrak{sl}}
\nc{\ff}{\mathfrak{f}}
\nc{\g}{\mathfrak{g}}
\nc{\gh}{\widehat\g}
\nc{\h}{\mathfrak{h}}
\nc{\fl}{\mathfrak{l}}
\nc{\fm}{{\mathfrak m}}
\nc{\fM}{{\mathfrak M}}
\nc{\fp}{{\mathfrak p}}
\nc{\fh}{{\mathfrak h}}
\nc{\fg}{{\mathfrak g}}
\nc{\fgh}{{\widehat{\mathfrak g}}}
\nc{\fb}{{\mathfrak b}}
\nc{\fn}{{\mathfrak n}}
\nc{\fQ}{\mathfrak{Q}}
\nc{\ft}{\mathfrak t}
\nc{\Aut}{\operatorname{Aut}}
\nc{\ch}{{\mathop {\rm ch}}}
\nc{\tr}{{\mathop {\rm tr}\,}}
\nc{\im}{{\mathop {\rm im}}}
\nc{\id}{{\mathop {\rm id}}}
\nc{\ad}{{\mathop {\rm ad}}}
\nc{\gr}{\mathrm{gr}}
\nc{\ord}{\mathrm{ord}}
\nc{\red}{\mathrm{red}}
\nc{\End}{\operatorname{End}}
\nc{\Spec}{\operatorname{Spec}}
\nc{\Spf}{\operatorname{Spf}}
\nc{\Proj}{\operatorname{Proj}}
\nc{\Pic}{\operatorname{Pic}}
\nc{\Lie}{\operatorname{Lie}}
\nc{\Der}{\operatorname{Der}}
\nc{\Coh}{\mathrm{Coh}}
\nc{\coh}{\mathrm{coh}}
\nc{\qcoh}{\mathrm{Qcoh }}
\nc{\Gal}{\operatorname{Gal}}
\nc{\Hom}{\mathrm{Hom}}
\nc{\Rhom}{\mathrm{RHom}}
\nc{\cHom}{\mathop{\mathcal{H}\! \mathit{om}}\nolimits}
\nc{\Ext}{\mathrm{Ext}}
\nc{\Ann}{\mathrm{Ann}}
\nc{\Vect}{\mathrm{Vect}}
\nc{\wt}{\mathrm{wt}}
\nc{\hw}{\mathrm{hw}}
\nc{\rk}{\operatorname{rank}}
\nc{\Gr}{{\mathrm {Gr}}}
\nc{\Fl}{\mathrm{Fl}}
\nc{\spn}{\mathrm{span}}
\nc{\Rep}{\operatorname{Rep}}
\nc{\Irrep}{\mathrm{Irrep }}
\nc{\supp}{\operatorname{supp}}
\nc{\tp}{\mathrm{top}}
\nc{\codim}{\mathrm{codim}}
\nc{\IC}{\operatorname{IC}}
\nc{\Res}{\mathrm{Res}}
\nc{\modules}{\mathrm{-mod}}
\nc{\Perv}{\mathrm{Perv}}
\nc{\Forg}{\operatorname{Forg}}
\nc{\shhom}{\mathop{\mathcal{H}\! \mathit{om}}\nolimits}
\nc{\chom}{\mathop{\mathcal{H}\! \mathit{om}}\nolimits}
\nc{\cEnd}{\mathop{\mathcal{E}\! \mathit{nd}}\nolimits}
\nc{\mods}{\mathrm{-mod}}
\nc{\exo}{\mathrm{exo}}
\nc{\Hol}{\operatorname{Hol}}
\nc{\torus}{\bC^\times}
\nc{\an}{\mathrm{an}}
\nc{\Out}{\operatorname{Out}}
\nc{\Tan}{\operatorname{Tan}}
\nc{\pt}{\mathrm{pt}}
\nc{\GfL}{{(G \times \bC^\times, \fl \oplus \bC D)}}
\nc{\Gfl}{{(G \times \bC^\times, \fl \oplus \bC D)}}
\nc{\eO}{\EuScript{O}}
\nc{\bra}{\langle}
\nc{\ket}{\rangle}
\nc{\pa}{\partial}
\nc{\ld}{\ldots}
\nc{\cd}{\cdots}
\nc{\hk}{\hookrightarrow}
\nc{\T}{\otimes}
\nc{\ov}{\overline}
\nc{\wh}{\widehat}
\nc{\wti}{\widetilde}
\nc{\svee}{{\!\scriptscriptstyle\vee}}
\nc{\ula}{{\underline{\la}}}
\nc{\umu}{{\underline{\mu}}}
\nc{\conv}{{\widetilde \times}}
\nc{\lach}{{\la^\svee}}
\nc{\alch}{{\al^\svee}}
\nc{\omch}{{\omega^\svee}}
\nc{\much}{{\mu^\svee}}
\nc{\md}{\text {-mod}}
\nc{\et}{\mathrm{\acute{e}t}}
\nc{\GL}{\mathfrak{GL}}
\nc{\Tr}{{\mathop {\rm Tr}\,}}
\nc{\Id}{{\mathop {\rm Id}}}
\nc{\msl}{\mathfrak{sl}}
\nc{\mgl}{\mathfrak{gl}}
\nc{\U}{\mathrm U}
\nc{\Q}{\mathfrak Q}
\nc{\on}{\operatorname} \nc\ol{\overline} \nc\ul{\underline}
\nc{\BA}{{\mathbb{A}}} \nc{\BC}{{\mathbb{C}}} \nc{\BF}{{\mathbb{F}}}
\nc{\BD}{{\mathbb{D}}} \nc{\BG}{{\mathbb{G}}} \nc{\BQ}{{\mathbb{Q}}}
\nc{\BM}{{\mathbb{M}}} \nc{\BN}{{\mathbb{N}}} \nc{\BO}{{\mathbb{O}}}
\nc{\BP}{{\mathbb{P}}} \nc{\BR}{{\mathbb{R}}}
\nc{\BZ}{{\mathbb{Z}}} \nc{\BS}{{\mathbb{S}}} \nc{\BW}{{\mathbb{W}}}
\nc{\CA}{{\mathcal{A}}} \nc{\CL}{{\mathcal{L}}} \nc{\CV}{{\mathcal{V}}} \nc{\CW}{{\mathcal{W}}}
\nc{\CalD}{{\mathcal{D}}}
\nc{\sic}{{\on{sc}}}
\nc{\add}{{\on{add}}}
\title{Perverse coherent sheaves on symplectic singularities}
\author{Ilya Dumanski}
\address{Ilya Dumanski:\newline
		Department of Mathematics, MIT, Cambridge, MA 02139, USA,
		{\it and }\newline
		Department of Mathematics, National Research University Higher School of Economics, Russian Federation,
		Usacheva str. 6, 119048, Moscow.
	}
	\email{ilyadumnsk@gmail.com}
\begin{document}





\begin{abstract}
We propose the notion of perverse coherent sheaves for symplectic singularities and study its properties. 
In particular, it gives a basis of simple objects in the Grothendieck group of Poisson sheaves. We show that perverse coherent bases for the nilpotent cone and for the affine Grassmannian arise as particular cases of our construction.
\end{abstract}

\maketitle

\tableofcontents

\section{Introduction}
\subsection{Perverse coherent sheaves} \label{intro subsec: perverse coherent start}
Perverse sheaves were introduced by Beilinson--Bernstein--Deligne in \cite{BBD82} and are ubiquitous in geometric representation theory.
Bezrukavnikov and Arinkin proposed the coherent counterpart of this notion in \cite{Bez00, AB10}. 
	
Perverse coherent sheaves is a much more restrictive notion than their original constructible version. The reason is, perverse coherent sheaves behave nicely (the IC-extension functor is defined) only in the case when one considers the category of coherent sheaves equivariant under an algebraic group action, and, moreover, this action has a finite number of orbits such that the dimensions of adjacent orbits differ at least by 2, see \cite{Bez00} (or, in the language of \cite{AB10}, we deal with a stack with a finite number of geometric points, and the perversity function on the space of points is strictly monotone and strictly comonotone).

There are two main examples of such situation in geometric representation theory: affine Grassmannian $\Gr_G$ with the action of the current group $G(\cO)$, and the nilpotent cone $\cN$ with action of the group $G$. The category of perverse coherent sheaves is a fruitful object of study in both of these instances. We now briefly sketch recent developments in these areas.

The category of perverse coherent $G(\cO)$- (or $G(\cO) \rtimes \torus$)-equivariant sheaves on $\Gr_G$ is called the {\it coherent Satake category}. Its study was initiated in \cite{BFM05}. In~\cite{CW19}, this category was connected to line defects in $4d$ $\cN = 2$ pure gauge theory (this later was extended to arbitrary gauge theory of cotangent type in~\cite{CW23}). It was also proved for $G = GL_n$ in~\cite{CW19} that this category is a cluster monoidal categorification. In \cite{Dum24} we suggested partial progress towards the same result for arbitrary simply-laced $G$. In \cite{FF21}, the basis of simple perverse coherent sheaves in $K^{GL_n(\cO) \rtimes \torus}(\Gr_{GL_n})$ was related to Lusztig’s dual canonical basis (see Section~\ref{subsec: affine grassmannian slices} for discussion on conjectural generalization of this result to other types).


The category of perverse coherent $G$- (or $G \times \torus$)-equivariant sheaves on the nilpotent cone $\cN \subset \g$ of a semisimple Lie algebra has a long history of study. 
It was used in \cite{Bez03} to establish the Lusztig--Vogan bijection, originally conjectured in \cite{Lus85}, \cite{Vog98}.
It was used in \cite{Bez06a} to describe the cohomology of the small quantum group at the root of~1, with coefficients in a tilting module, and prove a conjecture of \cite{Hum95}. Both these applications can be conceptually explained by the following facts, which are actually the main reasons for interest in the category under discussion.
The perverse t-structure on $\cN$ is closely related (and may be used to define) the so-called exotic t-structure on the category of equivariant sheaves on $\wti \cN \simeq T^* G/B$, as well as on $\wti \cN \times_\cN \wti \cN$. 
As proved in \cite{BM13}, the exotic t-structure is closely related to localization of $\g$-modules in positive characteristic \cite{BMRR08}, and it is responsible for the existence of canonical bases in K-theory, conjectured by Lusztig in \cite{Lus99}. 
Moreover, the exotic t-structure plays an important role in Bezrukavnikov's theory of coherent--constructible equivalences. 
Specifically, this is the t-structure, corresponding to the (constructible) perverse t-structure on the affine flag variety of the Langlands-dual group $G^\vee$ under Bezrukavnikov's equivalences (see \cite[Theorem 2]{Bez09}, \cite[6.2.1]{BM13}, \cite[Theorem 54]{Bez16}).
It follows that the basis of simple perverse coherent sheaves in $K^{G \times \torus}(\cN)$ is a part of the Kazhdan--Lusztig canonical basis for the affine Hecke algebra of $G^\vee$. See \cite{Ach12, ACR18, AHR22} for other results in the area.

\subsection{Symplectic singularities}
One of the leading slogans of geometric representation theory of the past decade is ``to generalize known results from the case of the nilpotent cone to other conical symplectic singularities'', (see e.g. \cite{BPW12, BLPW14}).
This class of varieties includes Kleinian singularities, hypertoric varieties, Hilbert schemes of points, slices in affine Grassmannian, Nakajima quiver varieties, as well as Higgs and Coloumb branches of 3-dimensional supersymmetric gauge theories, which provide physical background and motivation for the area (see \cite{WY23} for an overview of physical background and connection to 3d-mirror symmetry).

Thus, it is natural to seek a suitable perverse coherent t-structure for a general symplectic singularity. This endeavor can be viewed as a step towards building a theory of canonical bases for general symplectic singularities, as well as a tiny step towards understanding the extent to which Bezrukavnikov's theory has a place for symplectic singularities beyond the nilcone. 

There is, however, an obvious obstruction in building this theory for an arbitrary symplectic singularity. 
As we explained above, the existing definition works only for the case of group-equivariant sheaves, s.t. the group has a finite number of orbits with dimensions of adjacent orbits differing at least by 2. 
Such an action does not exist for symplectic singularities other than closures of nilpotent orbits and their coverings.

The natural stratification, however, exists --- the one by symplectic leaves. It is finite due to Kaledin~\cite{Kal06}, and dimensions of all strata are even (because they are symplectic), so at least at a first glance, it suits the requirements for existence of the category of coherent sheaves, perverse with respect to this stratification (note that for the nilpotent cone this stratification coincides with the one by group orbits).

Symplectic leaves are orbits not of an algebraic group, but rather of a Lie algebroid (of Hamiltonian vector fields). Thus, it seems natural first to develop the theory of perverse coherent modules over Lie algebroids. In order to also account existing group equivariance, we actually need a more general notion of Harish-Chandra (HC) Lie algebroid. 

We now come to the point of stating the main results of the present paper.

\subsection{Main results}
\subsubsection{Existence of perverse t-structure for modules over HC Lie algebroid} \label{intro subsubsec: perverse over Lie algebroids}
Although the main object of our interest is the category of sheaves on symplectic singularities, equivariant with respect to a particular Lie algebroid (we describe it below), we start with a general setting of modules over an arbitrary Harish-Chandra Lie algebroid.

Namely, we assume that $X$ is a variety, $(G, \cL)$ is a HC Lie algebroid on $X$, and we are interested in the category of $\cO_X$-coherent $(G, \cL)$-modules $\Coh^{(G, \cL)} X$, and the corresponding bounded derived category of quasi-coherent sheaves with coherent cohomology $D^b_{\coh} \qcoh^{(G, \cL)} X$ (see Section~\ref{section: Harish-Chandra Lie algebroids} for definitions). 
As in the group-equivariant case~\cite{Bez00}, we assume that there is a finite number of $(G, \cL)$-orbits on $X$, and that dimensions of adjacent (meaning one lies in the closure of the other) orbits differ by at least 2. Moreover, we assume that there is a dualizing object in the derived category $D^b_{\coh} \qcoh^{(G, \cL)} X$ (its existence will be clear in the examples we are interested in). 

Then we have a complete analog of the main results of~\cite{Bez00, AB10}, which guarantees the existence of perverse coherent t-structure under  the above assumptions. Moreover, there is the IC-extension functor from each $(G, \cL)$-orbit, and we get a classification of simple modules in the heart of perverse t-structure as IC-extensions of simple objects on an orbit. The resulting abelian category is Artinian and Noetherian. This is proved in Section~\ref{sec: perverse coherent modules over HC Lie algebroids}. All main results of this section are summarized in Theorem~\ref{main theorem on perverse coherent sheaves}.

We should note that these results are not hard, since the proofs are in many ways parallel to the group-equivariant case of~\cite{Bez00, AB10}. There are some modifications though, the main one being the following: unlike the group-equivariant case, it is not true (even for the case $X = \pt$, which is well-known) for Lie algebroids equivariance that any quasi-coherent equivariant sheaf is a union of its coherent equivariant subsheaves. 
Hence, the following two useful features used in~\cite{Bez00, AB10} do not hold in our situation. First, there is no equivalence of triangulated categories $D^b \Coh^{(G, \cL)} X$ and $D^b_{\coh} \qcoh^{(G, \cL)} X$ (the first category being ``smaller'', see Example~\ref{example: g-modules on point}). We work with the second category for technical reasons. 
Second, it is not true (or at least the standard proof does not apply) that any coherent equivariant sheaf on an open subscheme has some coherent extension to the whole scheme. We get around this difficulty by using the codimension 2 assumption for orbits from the very beginning (then the usual non-derived pushforward provides the desired extension, see Lemma~\ref{serre quotient by subcategory of sheaves supported on closed subset}). So whenever either of these facts is used in~\cite{Bez00, AB10}, we find a detour. 

\subsubsection{Definition of the category for symplectic singularities} \label{intro subsubsec: definition of the category}

Our main results concern symplectic singularities. We propose a definition of the category of perverse coherent (or perverse Poisson) sheaves on an arbitrary symplectic singularity (not necessarily admitting a symplectic resolution). This is done in Section~\ref{Section: symplectic singularities} of the main body of the text; we now provide a brief sketch.

Let $X$ be a symplectic singularity. We assume it is conical, with conical action denoted $\torus_\hbar \curvearrowright X$. The Poisson structure on $X$ equips the sheaf of K\"ahler differentials $\Om_X$ with structure of a Lie algebroid. Modules over this algebroid are called Poisson sheaves or Poisson modules. Orbits of this algebroid are the symplectic leaves of $X$.

It is the category of $\Om_X$-modules (Poisson sheaves) we suggest to define the perverse t-structure on. More precisely, we consider the triangulated category $D^{b}_{\coh} \qcoh^{\Om_X} X$; the results of~\ref{intro subsubsec: perverse over Lie algebroids} apply to this case, allowing us to define the corresponding category of perverse coherent sheaves $\cP_\coh^{\Om_X} X$.

One may also want to account for the contracting $\torus_\hbar$-action equivariance and consider the category $\cP_\coh^{\torus_\hbar, \Om_X} X$. There is also a group $G$ of Hamiltonian graded automorphisms of $X$, and one may want to impose a condition of integrability of sheaves along $G$, leading to the category $\cP_\coh^{\torus_\hbar \times G, \Om_X} (X)$. We believe all these notions are meaningful.
 
Note that in \cite{Cul10} there was an attempt to ``glue'' categories of Poisson sheaves on each orbit of the nilpotent cone into a single category  with perverse t-structure. This is not what we do: restricting to a symplectic leaf $S$, the category we get is the category of modules over $\Om_X \vert_S$, which is not isomorphic to $\Om_S$. For example, on the closed leaf $\{0\} \in X$, the category that arises in our construction is the category of modules over the Lie algebra $\Om_X\vert_0 = T^*_0 X$, and not the category of Poisson sheaves on a point, which is just $\Vect_\bC$.

It is not immediately clear why the particular notion we suggest is reasonable. We justify it by studying it in some examples, see~\ref{intro subsubsec: examples}.

\subsubsection{Simple modules on a symplectic leaf}

As explained above, the simple objects in $\cP_\coh^{\torus_\hbar, \Om_X}(X)$ are the IC-extensions of (cohomologically shifted) simple modules on a symplectic leaf of $X$. So, the problem of describing simple modules on a leaf arises.

Note that in the group-equivariant case of~\cite{Bez00}, this question is completely elementary: simple $G$-equivariant coherent sheaves on a $G$-orbit are the same as irreducible representations of the stabilizer of a point on that orbit, which is reasonably explicit and computable.

In contrast, for our Lie algebroid case, no similar elementary argument seems possible. As explained above, for a symplectic leaf $S \subset X$, the category we are trying to describe is that of $\Om_X \vert_S$-modules on $S$. We were unable to give a complete description of this category in full generality. However, we obtain some partial results, which we now explain.

More generally, one can ask how to describe the category of $\cO$-coherent modules over a transitive Lie algebroid. The first observation is that such a category is Tannakian, hence equivalent to the category of representations of some pro-algebraic group, see Section~\ref{subsection differential galois group}.

The tautological example of a transitive Lie algebroid on a smooth variety $Y$ is the tangent algebroid $\cT_Y$. The category of $\cO$-coherent modules over it (D-modules) is known to be hard to describe in general in the setting of algebraic geometry. This is in contrast with smooth or analytic settings, where it is well-known to be equivalent to the category of representations of fundamental group $\pi_1(Y)$.

So, as a first step, we consider a locally free transitive Lie algebroid in the holomorphic setting, and give a description of the category of $\cO$-coherent modules over it, see Section~\ref{subsec: modules over transitive is equivariantization}, with the main result being Theorem~\ref{thm: L-modules is equivariantization}.
It turns out, the answer in general involves not only $\pi_1(Y)$, but the homotopy groupoid $\pi_{\leq 2}(Y)$ and the inertia bundle of the Lie algebroid. Our method for dealing with this question involves higher categories.

As a second step, we consider the analytification functor, and prove that, in the case of graded modules (i.e. modules over the HC-pair $(\torus_\hbar, \Om_X|_S)$), it is fully faithful. This can be thought of as the fact that all $(\torus_\hbar, \Om_X|_S)$-modules on $S$ have regular singularities (theory of regular singularities for modules over Lie algebroids was developed in~\cite{Kae98}). As often in the theory of regular singularities, we reduce it to the case of projective variety and use GAGA. This is done in Proposition~\ref{prop: gaga for open of codim 2} and in the proof of Theorem~\ref{thm: on simple modules on a symplectic leaf}.

This detour through holomorphic setting allows us to provide the desired estimation of the category of interest, see Theorem~\ref{thm: on simple modules on a symplectic leaf}.

\subsubsection{Examples} \label{intro subsubsec: examples}
Let $\cN$ be the nilpotent cone of a semisimple Lie algebra $\g$, and let $G$ be the corresponding simply connected group. Then there is the category $\cP_\coh^{G \times \torus_\hbar}(\cN)$ of perverse coherent $G \times \torus_\hbar$-equivariant sheaves on $\cN$, as defined in~\cite{Bez00, AB10}.
On the other hand, we have the category $\cP_\coh^{\torus_\hbar, \Om}(\cN)$, suggested in this paper.

These categories are different; however, they turn out to be closely related. Namely, the action of $G$ on $\cN$ is Hamiltonian, meaning that the action of $\g$ factors through $\Om_\cN$. Thus we have the restriction functor $\Coh^{\torus_\hbar, \Om}(\cN) \rightarrow \Coh^{\torus_\hbar, \g}(\cN) \simeq \Coh^{G \times \torus_\hbar}(\cN)$. We show that the corresponding derived functor is t-exact with respect to the perverse t-structures; moreover, it commutes with IC-extension from any orbit; moreover, it defines a bijection between the classes of simple objects. Hence, in particular, there is an isomorphism of the Grothendieck groups of these categories, which also preserves perverse bases:
\begin{equation*}
K^{G \times \torus_\hbar}(\cN) \simeq K^{\torus_\hbar, \Om}(\cN),
\end{equation*}
see Section~\ref{subsec: nilcone} for details.
There are also variants of this result in case $G$ is not necessarily simply connected, and also not including the contracting action, see Theorem~\ref{perverse bases for nilcone are the same}.

Another example of perverse coherent category studied before is $\cP_\coh^{G(\cO) \rtimes \torus_\hbar}(\ol \Gr^\la_G)$, as well as the colimit $\cP_\coh^{G(\cO) \rtimes \torus_\hbar}(\Gr_G)$ (the notations are standard, see Section~\ref{subsec: affine grassmannian slices}).

Let us assume $G$ is simply connected. Then, in particular, there is an open conical symplectic singularity in $\ol \Gr^\la_G$ --- the transversal slice to the zero orbit, denoted $\cW^\la_0$. The action of the Lie algebra $\g[t]$ on $\ol \Gr^\la$ restricts to the open $\cW^\la_0$, and we show that it factors through $\Om_{\cW^\la_0}$.
This allows us to connect our category with previously studied objects. In particular, we show that the restriction functor induces an inclusion of Grothendieck groups
\begin{equation*}
K^{\torus_\hbar, \Om}(\cW^\la_0) \hookrightarrow K^{G(\cO) \rtimes \torus_\hbar} (\ol \Gr^\la_G),
\end{equation*}
which maps perverse basis to (a part of) perverse basis. Moreover, in the colimit $\la \rightarrow \infty$, these two K-groups are isomorphic, and the isomorphism respects perverse bases. This is shown in Section~\ref{subsec: affine grassmannian slices}.

So, the construction we propose in the present paper generalizes both previously studied perverse coherent bases. Note that in both these cases this basis is known or expected to be canonical --- in the sense of Lusztig or Kazhdan--Lusztig, see~\ref{intro subsec: perverse coherent start}. We expect that for other symplectic singularities, our basis should share similar properties, to some extent. This is partially confirmed by our studies of the basis for Slodowy slices and affine Grassmannian slices to a nonzero orbit, see Section~\ref{subsec: other examples}.

\subsection{Directions for further research} \label{intro subsubsec: further questions}
Below, we briefly list a few possible directions for further research.

\subsubsection{More examples}
It would be interesting to investigate properties of perverse coherent basis for other examples of symplectic singularities, such as hypertoric varieties or quiver varieties.

\subsubsection{Lifting to symplectic resolutions}
Let $\wti \cN \rightarrow \cN$ be the Springer resolution. In this case, there is a natural way to ``lift'' the perverse t-structure from $D^b \Coh^G(\cN)$ to $D^b \Coh^G(\wti \cN)$: namely, there is the noncommutative Springer resolution $A$ \cite{Bez06b}, which can be considered as a sheaf of algebras on $\cN$, and one can define the category of perverse $A$-modules. This is a t-structure on $D^b \Coh^G(\wti \cN)$, called {\it perversely-exotic} and the basis of simple objects in it can be identified with the KL canonical basis in the anti-spherical module for the affine Hecke algebra of $G^\vee$, see~\cite[6.2]{BM13}.

Noncommutative resolutions exist for other symplectic resolutions, see~\cite{Kal08}. It would be very interesting to ``lift'' the basis or the t-structure we proposed for a symplectic singularity, to a symplectic resolution, whenever it exists.

\subsubsection{Comparison with bases in K-theory}
Classes of simple perverse coherent sheaves on a symplectic singularity $X$ form a basis in the Grothendieck group of Poisson sheaves on $X$.
As we explain in Section~\ref{sec: examples}, in some examples, this basis is actually related to a basis in equivariant K-theory of (possibly different!) variety. There are known examples of bases in equivariant K-theory, such as K-theoretic stable envelopes \cite{MO12, OS22} and Hikita's canonical basis~\cite{Hik20}. It would be interesting to investigate the relation between these constructions and ours.

\subsubsection{Restriction to Lagrangians}
Koppensteiner \cite{Kop15} proved that $\cF \in D^b(\Coh^G X)$ is perverse if and only if $i_Z^! \cF$ is concentrated in a single cohomological degree for sufficiently many \textit{measuring subvarieties} $Z$ of $X$. We expect that the same argument should also apply to our notion of perversity. For symplectic singularities, a natural choice of a measuring subvariety would be a Lagrangian subvariety. Lagrangian subvarieties may arise as supports of holonomic modules over quantizations (see \cite{Los17}). It would be interesting to investigate this further.

\subsubsection{Bimodules over quantizations}
Poisson sheaves, which we work with, are the semi-classical variant of Harish-Chandra bimodules over quantizations. It would be interesting to study a quantum analog of the notion we propose here, possibly in the modular setting. See~\cite[Section~4.5]{Los23},~\cite{Los21}.

\subsection{The paper is organized as follows}
In Section~\ref{section: Harish-Chandra Lie algebroids}, we collect all the required facts about HC Lie algebroids and modules over them. While some results are standard, others have not appeared in the literature to the best of our knowledge, and may be of independent interest. 

In Section~\ref{sec: perverse coherent modules over HC Lie algebroids}, we construct the perverse t-structure for modules over HC Lie algebroids, define the IC-extension functor, and describe simple objects in the heart of the perverse t-structure. The main results of this Section are summed up in Theorem~\ref{main theorem on perverse coherent sheaves}. This Section largely follows the papers~\cite{Bez00, AB10}, adapting them to our setting.

In Section~\ref{Section: symplectic singularities}, we recall the required properties of conical symplectic singularities, and propose a definition of the category of perverse coherent sheaves on them, Definition~\ref{def: category for sympl sing}. We then study simple modules on a symplectic leaf, Theorem~\ref{thm: on simple modules on a symplectic leaf}.

In Section~\ref{sec: examples}, we examine the suggested general notion in particular examples. We first study the case of the nilpotent cone and prove that our basis coincides with the one of~\cite{Bez00}, Theorem~\ref{perverse bases for nilcone are the same}. We then turn to the case of affine Grassmannian slice to the zero orbit and relate our basis to the known one, Theorem~\ref{thm: basis for slice in affine gr}. Finally, we speculate about other examples in Sections~\ref{subsec: other examples} and~\ref{subsec: double affine grass}.

\subsection*{Acknowledgements}
None of what appears in this paper would have been possible without the patient guidance of Roman Bezrukavnikov. It is a pleasure to thank him. 

Different parts of the paper also owe their existence to discussions with many mathematicians. I am particularly indebted to the following people for help with the following parts of the paper: Alexandra Utiralova for Section~\ref{subsection differential galois group}; Pavel Etingof, Alexander Petrov, and Ekaterina Bogdanova for Section~\ref{subsec: modules over transitive is equivariantization}; Ivan Losev and Andrei Ionov for Section~\ref{subsec: gaga}; Dmytro Matvieievskyi for Section~\ref{subsec: symplectic singularities}; Vasily Krylov for Lemma~\ref{lem: centralizer acts faithfully on slice}; Dinakar Muthiah for Section~\ref{subsec: double affine grass}.

I also thank  Ivan Losev, Michael Finkelberg, and Vasily Krylov for reading a draft of this article and helping improve the presentation.

\section{Lie algebroids} \label{section: Harish-Chandra Lie algebroids}

In this section, we collect results about Harish-Chandra Lie algebroids and modules over them, required for the purposes of the present article. 
For a more thorough overview, see \cite{Kae98, BB93} for the more relevant to us algebraic case, or \cite{Mac05, Mei17} for the better studied smooth and analytic cases.

\subsection{Lie algebroids}
Let $X$ be a connected scheme of finite type over the field $\bC$ of complex numbers.
Unless otherwise specified, we do not assume that $X$ is smooth. The tangent sheaf $\cT_X$ is defined as the dual to the sheaf of K\"ahler differentials $\Om_X$.

\begin{definition}
A \textbf{Lie algebroid} $(\cL, \rho)$ on $X$ is a quasi-coherent sheaf $\cL$, equipped with a Lie bracket $[\cdot, \cdot]: \cL \otimes_{\bC} \cL \rightarrow \cL$ and an $\cO_X$-linear anchor map $\rho: \cL \rightarrow \cT_X$ to the tangent sheaf, such that $\rho$ intertwines the Lie brackets, and for any local sections $\ell_1, \ell_2 \in \cL(U)$, $f \in \cO_X(U)$, we have $[\ell_1, f \ell_2] = f[\ell_1, \ell_2] + \rho(\ell_1)(f) \ell_2$.
\end{definition}

For any $\cL$, the kernel of the anchor map $\h := \ker \rho \subset \cL$ is an $\cO_X$-linear sheaf of Lie algebras. We call $\h$ the {\bf inertia sheaf}.

Given $\cL$, one can form the universal enveloping sheaf of algebras $\cU(\cL)$ in the obvious way. See \cite{BB93} for details, where algebras of the form $\cU(\cL)$ are called the {\bf D-algebras}.

\begin{definition}
An $\cL$\textbf{-orbit} on X is a maximal locally closed connected subscheme $S \subset X$ such that for any point $s \in S$, the image of $\rho_s$ is equal to $T_s S \subset T_s X$.
\end{definition}

Orbits do not necessarily exist in general.
An algebroid $\cL$ is called {\bf transitive} if the anchor map $\rho$ is surjective (equivalently, if $X$ is the only orbit of $\cL$).

We call a locally closed subvariety $Y \subset X$ {\bf $\cL$-invariant} if at any $y \in Y$, $\im \rho_y \subset T_y Y \subset T_y X$.

\begin{lem}
Any orbit of a Lie algebroid $\cL$ is a smooth variety.
\end{lem}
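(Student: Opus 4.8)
The plan is to prove the stronger statement that the local ring $\cO_{S,s}$ is \emph{regular} at every $s\in S$; over $\bC$ this is equivalent to smoothness and in particular forces $S$ to be reduced, and since $S$ is connected, smoothness makes it an irreducible variety. Regularity at $s$ is the equality $\dim_s S=\dim_{\bC}T_sS$, and the defining property of an orbit already supplies $\dim_{\bC}T_sS=\dim_{\bC}\im\rho_s=:r$. So the whole problem collapses to the two inequalities $\dim_s S\le r$ and $\dim_s S\ge r$. (I take $S$ with its reduced structure, which is the standard convention for orbits; the argument below recovers reducedness a posteriori.)

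The upper bound is the general fact that Krull dimension is at most embedding dimension: $\dim_s S\le\dim_{\bC}(\fm_s/\fm_s^2)=\dim_{\bC}T_sS=r$. The content is the lower bound, which I would extract from formal flows of the anchored vector fields. Choose local sections $\ell_1,\dots,\ell_r$ of $\cL$ near $s$ whose images $v_i:=\rho(\ell_i)\in\cT_X$ have values $v_i(s)$ forming a basis of $\im\rho_s=T_sS$. Because $S$ is reduced and each $v_i$ is fibrewise tangent to $S$, a one-line check (for $f\in\cI_S$ one has $v_i(f)(s')=\langle df(s'),v_i(s')\rangle=0$ at every closed point $s'\in S$, hence $v_i(f)\in\cI_S$) shows each $v_i$ preserves $\cI_S$ and thus defines a derivation of $\cO_{S,s}$. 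In characteristic $0$ the formal exponential $\exp(t_i v_i)\colon g\mapsto\sum_n\frac{t_i^n}{n!}v_i^n(g)$ is a well-defined algebra homomorphism $\cO_{S,s}\to\cO_{S,s}[[t_i]]$. Composing them and evaluating at $s$ yields a local homomorphism
\[
\bar\Phi\colon \cO_{S,s}\longrightarrow \bC[[t_1,\dots,t_r]],\qquad
\bar\Phi(g)\equiv g(s)+\sum_i t_i\,v_i(g)(s)\pmod{(t_1,\dots,t_r)^2}.
\]
Dually this is a formal map $\Spf\bC[[t_1,\dots,t_r]]\to\wh S_s$ whose tangent map sends $\partial_{t_i}\mapsto v_i(s)$ and is therefore injective. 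Hence the cotangent map is surjective, and by complete Nakayama the induced $\wh{\cO}_{S,s}\twoheadrightarrow\bC[[t_1,\dots,t_r]]$ is surjective, giving $\dim_s S=\dim\wh{\cO}_{S,s}\ge r$. Combined with the upper bound this forces $\dim_s S=r=\dim_{\bC}T_sS$, so $\cO_{S,s}$ is regular.

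The main obstacle is precisely this lower bound: the passage from the infinitesimal data ``the $v_i(s)$ are independent tangent vectors'' to the geometric statement ``$S$ has dimension at least $r$ at $s$,'' which is delicate because an algebraic vector field has no genuine flow. The formal-flow construction above is the cleanest way around it, producing directly a surjection of $\wh{\cO}_{S,s}$ onto a power-series ring. Alternatively, since the paper later uses analytification and GAGA freely, one may argue analytically: the $v_i$ are holomorphic vector fields on $S^{\an}$ near $s$, tangent to $S^{\an}$, their local one-parameter flows exist and preserve $S^{\an}$, and the accessibility map $(t_1,\dots,t_r)\mapsto\phi^{t_1}_{v_1}\cdots\phi^{t_r}_{v_r}(s)$ has injective differential at the origin, carving out an $r$-dimensional piece of $S^{\an}$ through $s$. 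Either route completes the proof, after which smoothness plus connectedness gives that $S$ is an irreducible smooth variety.
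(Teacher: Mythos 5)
Your proof is correct, but it takes a genuinely different route from the paper's. The paper's argument is a two-line semicontinuity trick: on an orbit $S$, the function $s \mapsto \dim T_s S$ is upper semicontinuous while $s \mapsto \dim \im \rho_s$ is lower semicontinuous, and since the orbit condition forces them to coincide at every point, $\dim T_s S$ is locally constant, hence constant on the connected $S$; smoothness then follows because in characteristic zero the (reduced) variety $S$ has a dense smooth locus where $\dim T_s S$ already equals the local dimension. You instead prove pointwise regularity: the inequality $\dim_s S \le \dim_\bC T_s S$ is the trivial embedding-dimension bound, and the substantive lower bound $\dim_s S \ge r$ comes from exponentiating the anchored vector fields formally to produce a surjection $\wh{\cO}_{S,s} \twoheadrightarrow \bC[[t_1,\dots,t_r]]$. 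Your route is heavier but buys more: it avoids any appeal to generic smoothness or to constancy along $S$ (it would show regularity at a single point where the orbit condition holds and tangency along $S$ is known), and it makes explicit the accessibility mechanism --- formal flows of $\rho(\ell_i)$ carving out an $r$-dimensional formal subscheme --- that underlies why orbits are smooth at all; the paper's argument is shorter but leans on the characteristic-zero fact that the smooth locus of a variety is dense. All the delicate steps in your write-up check out (the verification that $v_i$ preserves $\cI_S$ via vanishing at closed points, locality of $\bar\Phi$, the cotangent surjectivity, and complete Nakayama). One small correction: you do not ``recover reducedness a posteriori'' --- you \emph{use} the reduced structure essentially, since the inference from $v_i(f)(s') = 0$ at all closed points $s'$ to $v_i(f) \in \cI_S$ is exactly the Nullstellensatz for the reduced ideal; this is harmless because taking $S$ reduced is the standard convention (and the paper's own proof makes the same implicit choice), but the parenthetical claim should be dropped or rephrased.
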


\begin{proof}
On an orbit $S$, the function $\dim T_s S$ is upper semicontinuous, while $\dim  (\im(\rho_s) )$ is lower semicontinuous. Since they coincide, $\dim T_s S$ is constant, and hence $S$ is smooth.
\end{proof}

\begin{definition} \label{def: module over Lie algebroid}
A {\bf module} over a Lie algebroid $\cL$ is a quasi-coherent sheaf $M$ together with a morphism $\cL \rightarrow \End_\bC (M)$ such that $\ell(fm) = \rho(\ell)(f) m + (f\ell)m$, $(f\ell)m = f(\ell m)$ for local sections $\ell \in \cL(U), f \in \cO_X(U), m \in M(U)$.
\end{definition}

Let us give a few examples of Lie algebroids relevant to the present paper:
\begin{example} \label{example of lie algeboroids}
\begin{enumerate}[a)]
	\item  The tangent sheaf $\cT_X$ is tautologically a Lie algebroid on $X$. In case $X$ is smooth, $\cT_X$ is transitive, $\cT_X$-modules are called D-modules, and $\cU(\cT_X)$ is the sheaf of differential operators on $X$.
	\item An action of a Lie algebra $\g$ on $X$ is, by definition, a Lie algebroid structure on the trivial sheaf of Lie algebras $\cO_X \otimes \g$. If an algebraic group $G$ acts on $X$, it induces the action of its Lie algebra $\g$ on $X$. Any $G$-equivariant sheaf is automatically an $\cO_X \T \g$-module. If $G$ is connected, $\cO_X \otimes \g$-orbits coincide with $G$-orbits.

    	\item \label{example: cotangent algebroid} A Poisson structure on $X$ naturally equips the sheaf of K\"ahler differentials $\Om_X$ with the structure of Lie algebroid. The Lie bracket is determined by $[df, dg] = d \{f, g\}$ (locally); the anchor map $\Om_X \rightarrow \cT_X$ is given by the Poisson bivector. The orbits of this algebroid are called the symplectic leaves. Modules over this Lie algebroid are called Poisson sheaves or Poisson modules. See \cite{Pol97} for details on this example.
    We call it the Poisson Lie algebroid.

    Note that if $X$ is smooth and the Poisson structure comes from a symplectic structure, then the anchor map $\Om_X \xrightarrow{\sim} \cT_X$ is an isomorphism; in particular, Poisson sheaves are just D-modules (and this notion does not depend on the symplectic structure).
\end{enumerate}
\end{example}

To clarify the notion of module over a Lie algebroid, we suggest the following example:
\begin{example} \label{example: adjoint modules}
Let $(\cL, \rho)$ be a Lie algebroid on $X$.
\begin{enumerate}[a)]
    \item The adjoint action of $\cL$ on itself does \emph{not} define an $\cL$-module, because it is not $\cO_X$-linear.
    \item \label{example: adjoint action on ker rho} However, it is elementary to see that the adjoint action of $\cL$ on the inertia sheaf $\h = \ker \rho \subset \cL$ does define an $\cL$-module.
\end{enumerate}
\end{example}

When we say that an $\cL$-module is coherent, we always mean that it is $\cO_X$-coherent (as opposed to being coherent over the universal enveloping sheaf of algebras $\cU(\cL)$).

\begin{lem} \label{coherent module is locally free}
A coherent module over a transitive Lie algebroid is locally free.
\end{lem}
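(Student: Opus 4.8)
The plan is to show that a coherent module $M$ over a transitive Lie algebroid $\cL$ has locally constant rank, which for a coherent sheaf is equivalent to being locally free. The key idea is that the $\cL$-action provides enough symmetry to propagate the fiber dimension of $M$ across all of $X$, and transitivity of $\cL$ means this symmetry reaches every direction in the tangent space.

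**First I would** recall the standard criterion: a coherent sheaf $M$ on a (possibly singular) scheme $X$ is locally free if and only if the function $x \mapsto \dim_{\kappa(x)} M_x \otimes \kappa(x)$ is locally constant. Since $X$ is connected, it then suffices to prove this fiber-dimension function is locally constant; it is automatically upper semicontinuous for any coherent sheaf, so the real content is lower semicontinuity, i.e. that the dimension cannot jump up on closed subsets. I would attempt to rule out such jumps using the connection-like structure coming from the anchor. Concretely, the flatness (equivalently local freeness) of $M$ can be detected infinitesimally: I would try to produce, from the $\cL$-module structure and the surjectivity of $\rho$, enough "flat sections" or enough infinitesimal parallel transport to trivialize $M$ in a formal or étale neighborhood of each point.

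**The cleanest route** is probably to pass to the completed local ring. Fix $x \in X$ and work over $\widehat{\cO}_{X,x}$. The surjectivity of $\rho \colon \cL \to \cT_X$ means that for the completed module $\widehat M$ we have an action of vector fields spanning the full tangent space, i.e. an integrable connection on $\widehat M$ relative to the relevant directions. The formal analog of the Cauchy–Kovalevskaya / Frobenius theorem then implies that $\widehat M$ is a free $\widehat{\cO}_{X,x}$-module, generated by flat sections: choosing a lift of a basis of the fiber $M \otimes \kappa(x)$ and using the flat structure to extend, one gets generators whose images are independent, so $\widehat M$ is free of rank $\dim_{\kappa(x)} M\otimes\kappa(x)$. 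Faithful flatness of completion then upgrades freeness of $\widehat M$ to local freeness of $M$ at $x$. Since $x$ was arbitrary and $X$ is connected, $M$ is locally free of constant rank.

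**The main obstacle** I anticipate is that $X$ is \emph{not assumed smooth}, so $\cT_X$ and hence $\cL$ need not be locally free, and the naive "integrate the connection" argument has to be justified carefully over a singular base; the anchor being surjective onto $\cT_X$ is the hypothesis that lets integration proceed in every direction despite singularities. I would need to check that the formal integrability argument only uses the Leibniz compatibility $\ell(fm) = \rho(\ell)(f)m + (f\ell)m$ from Definition~\ref{def: module over Lie algebroid} together with surjectivity of $\rho$, and does not secretly require smoothness of $X$. An alternative, perhaps more robust, formulation avoids Frobenius entirely: compare $M$ with $\cO_X^{\oplus r}$ ($r = \dim M\otimes\kappa(x)$) by constructing an $\cL$-equivariant map that is an isomorphism on the fiber at $x$; equivariance forces its kernel and cokernel to be $\cL$-submodules supported on a proper closed $\cL$-invariant subset, but transitivity forbids proper nonempty closed $\cL$-invariant subsets through $x$, giving an isomorphism near $x$. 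I expect the delicate point in either approach to be the singular base, and I would handle it by reducing to $\widehat{\cO}_{X,x}$ where the flat-section construction is purely algebraic.
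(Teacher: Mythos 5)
There is a genuine gap at the heart of your first route. Surjectivity of the anchor lets you lift commuting coordinate vector fields $\partial_1,\dots,\partial_n$ to sections $\kappa_1,\dots,\kappa_n$ of $\cL$, but nothing forces the lifts to commute: $[\kappa_i,\kappa_j]$ is a section of the inertia sheaf $\h=\ker\rho$, which acts $\cO$-linearly but in general nontrivially on $M$. So the $\cL$-action does \emph{not} give an integrable connection on $\widehat M$, and the ``formal Frobenius / Cauchy--Kovalevskaya'' step does not apply: the operators $\nabla_i=\kappa_i$ have curvature equal to the $\h$-action, and formal flat sections need not exist. The formal freeness you want is in fact true, but proving it requires an extra input you never supply, namely a (formal or local) splitting of $\cL$ as $\cT\oplus\h$ as Lie algebroids --- exactly the device the paper invokes elsewhere (\cite[Theorem~8.5.1]{DZ05} analytically, \cite[Theorem~A.7.3]{Kap07} formally), and a nontrivial statement in its own right. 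Your fallback route has the same problem in sharper form: an $\cL$-equivariant map $\cO_X^{\oplus r}\to M$ out of the trivial module requires $r$ flat \emph{algebraic} sections, which typically do not exist even Zariski-locally (for $M=\cO_{\bA^1}\,e$ with $\nabla e = e\,dx$ the flat sections are multiples of $e^{-x}$); moreover the support of the kernel of such a map could be all of $X$, so invariance of supports alone cannot close that argument.

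The paper's proof (following \cite[Theorem~1.4.10]{HT07}) bypasses integrability entirely, and this is what makes the lemma elementary: lift a basis of $M_x/\fm_x M_x$ to generators $s_1,\dots,s_n$ of $M_x$ by Nakayama; given a relation $\sum_i\phi_i s_i=0$, minimize $\nu=\min_i \ord_x\phi_i$ over all relations; pick a \emph{single} vector field $\mu$ with $\ord_x\mu(\phi_1)<\nu$, lift it through the surjective anchor to $\kappa$, and apply $\kappa$ to the relation --- the Leibniz rule produces a new relation whose coefficient of $s_1$ has order $<\nu$, a contradiction. Only Leibniz and surjectivity of $\rho$ are used; no commutation of lifts, no flat sections. (This same one-derivation-at-a-time argument proves freeness of the stalk $M_x$ directly, so if you want to keep your completed-local-ring formulation, replace the Frobenius step by it.) Two smaller points: your opening criterion ``locally constant fiber dimension $\Rightarrow$ locally free'' requires $X$ reduced, which is not assumed; and your worry about singular $X$ dissolves because a transitive algebroid makes $X$ its unique orbit, and the paper has already shown orbits are smooth --- which is also what legitimizes the existence of the order-lowering vector field $\mu$.
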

\begin{proof}
This statement is well known for the case of D-modules. The proof of the general case is identical to the proof for D-modules given in \cite[Theorem 1.4.10]{HT07}. We provide the full proof for the reader's convenience.

Let $\cL$ be a transitive Lie algebroid on $X$, and $M$ be a coherent module over it.

Let $x \in X$ be a closed point, and consider the stalk $M_x$, which is a module over the local ring $\cO_x$. Let $\fm_x \subset \cO_x$ be the maximal ideal, and $\bar s_1, \hdots, \bar s_n$ be a basis of $M_x / \fm_x M_x$. Lift these to elements $s_1, \hdots, s_n \in M_x$. By Nakayama's lemma, $s_1, \hdots, s_n$ generate $M_x$. We have to show that they are linearly independent over $\cO_x$.

Assume, for contradiction, that we have $\sum_{i = 1}^n \phi_i s_i = 0$ for $\phi_i \in \cO_x$. Define the function $\ord_x$ by setting $\ord_x \phi = n$ if $\phi \in \fm_x^n$ but $\phi \notin \fm_x^{n + 1}$. Let $\nu = \min_i (\ord_x \phi_i)$. We may assume $\nu = \ord_x \phi_1$. As $\bar s_i$ are linearly independent, we have $\nu \geq 1$. Assume $\nu$ takes the minimal possible value among all choices of $\phi_i$.

It is clear that there exists a vector field $\mu$, defined locally around $x$, such that $\mu(\phi_1) \neq 0$ and $\ord_x \mu(\phi_1) < \nu$.

Now consider $\cL_x = \cL \otimes \cO_x$ and $\rho_x: \cL_x \rightarrow \Der (\cO_x)$. 
Since $\cL$ is transitive, $\rho_x$ is surjective, so there is $\kappa \in \cL_x$ such that $\rho_x(\kappa) = \mu$. Then we have:
\[
0 = \kappa (\sum_{i = 1}^n \phi_i s_i) = \sum_{i = 1}^n \mu(\phi_i) s_i + \sum_{i = 1}^n \phi_i \kappa( s_i).
\]
Since $s_i$ generate $M_x$, we have $\kappa(s_i) = \sum_j a_{ij} s_j$ for some $a_{ij} \in \cO_x$. Thus,
\[
0 = \sum_i (\mu(\phi_i) + \sum_j \phi_j a_{ji})s_i.
\]
The coefficient of $s_1$ is $\mu(\phi_1) + \sum_j \phi_j a_{j1}$. We have $\ord_x \mu(\phi_1) < \nu$ and $\ord_x(\sum_j \phi_j a_{ij}) \geq \nu$. Therefore, the entire coefficient has order strictly less than $\nu$. This is a contradiction.
\end{proof}
In particular, the above proposition tells that for a general Lie algebroid $\cL$, coherent $\cL$-modules are ``smooth along the stratification by $\cL$-orbits'', which will be important for the construction of perverse coherent t-structure later.

\subsection{Harish-Chandra Lie algebroids} \label{subseq: HC lie algebroids}

Let $G$ be an algebraic group acting on $X$, and let $\g$ be the Lie algebra of $G$.

\begin{definition}
A {\bf Harish-Chandra (HC) Lie algebroid} $(G, \cL)$ is a Lie algebroid $\cL$, equipped with a $G$-equivariant structure, and a $G$-equivariant morphism $i: \cO_X \T \g \rightarrow \cL$ of Lie algebras, such that the Lie bracket and the anchor map of $\cL$ are $G$-equivariant, and, moreover, two natural actions of $\g$ on $\cL$ --- one coming from $G$-equivariance and the other from $i$ --- coincide.
\end{definition}

One naturally defines the notion of $(G, \cL)$-module. Throughout the paper, when we refer to a ``$(G, \cL)$-module'', we always mean {\bf strongly-equivariant} module, see \cite[Section~1.8]{BB93}.

$(G, \cL)$-orbits are, by definition, the $\cL$-orbits. Lemma~\ref{coherent module is locally free} guarantees that any $\cO$-coherent module over a transitive Harish-Chandra Lie algebroid is locally free, since it is a module over the underlying Lie algebroid.

We denote by $\Coh^{(G, \cL)} X$ the category of $\cO_X$-coherent $(G, \cL)$-modules. This category is abelian, and one naturally defines the notion of kernels, cokernels, direct sum, and tensor product (over $\cO_X$) in it.

We also denote by $\qcoh^{(G, \cL)} X$ the category of quasi-coherent $(G, \cL)$-modules.

\subsection{Differential Galois group} \label{subsection differential galois group}

It was observed by Katz \cite{Kat72, Kat82, Kat87} that the category of $\cO$-coherent D-modules is Tannakian in the sense of Deligne--Milne \cite{DM82}. In fact, the same holds for modules over an arbitrary transitive HC Lie algebroid.

Assume that $(G, \cL)$ is a transitive HC Lie algebroid on $X$. Consider the tensor category $\Coh^{(G, \cL)}(X)$ of coherent $(G, \cL)$-modules. From Lemma~\ref{coherent module is locally free}, it is evident that the functor $\cF \mapsto \shhom_{\cO_X}(\cF, \cO_X)$ endows this category  with a rigid structure.
Moreover, take any closed $x \in X$, and consider the functor $F_x: \Coh^{(G, \cL)}(X) \rightarrow \Vect$, $M \mapsto M_x$, which maps $M$ to its fiber at $x$. Recall that $X$ is connected.


\begin{lem}
$\Coh^{(G, \cL)}(X)$ is a Tannakian category with $F_x$ being a fiber functor.
\end{lem}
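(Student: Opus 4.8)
The plan is to verify the hypotheses of the Tannakian recognition theorem of Deligne--Milne \cite{DM82}: a rigid abelian $\bC$-linear tensor category $\cC$ with $\End(\mathbf 1) = \bC$ that admits an exact, faithful, $\bC$-linear tensor functor to $\Vect$ is neutral Tannakian, the given functor becoming its fiber functor. Thus the work splits into two parts: (a) showing that $\Coh^{(G,\cL)}(X)$ is a rigid abelian tensor category with $\End(\cO_X) = \bC$, and (b) showing that $F_x$ is an exact, faithful, $\bC$-linear symmetric tensor functor. The whole argument rests on one input, Lemma~\ref{coherent module is locally free}, which guarantees that every object is locally free of finite rank.

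For (a): the abelian symmetric tensor structure (unit $\cO_X$, product $\T_{\cO_X}$, the standard associativity and commutativity constraints) is inherited from $\cO_X$-modules and is already in place. Rigidity is the point already observed in the text: since every object is locally free by Lemma~\ref{coherent module is locally free}, the functor $\shhom_{\cO_X}(-, \cO_X)$ produces honest duals with the usual evaluation and coevaluation, so the category is rigid. To see $\End(\cO_X) = \bC$, I would note that a $(G,\cL)$-endomorphism of $\cO_X$ is multiplication by a global section $f$ with $\rho(\ell)(f) = 0$ for every local section $\ell$ of $\cL$; as $\cL$ is transitive, $\rho$ surjects onto $\cT_X$, so $f$ is annihilated by all vector fields, hence locally constant, hence constant since $X$ is connected.

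For (b): $\bC$-linearity is immediate, and the tensor compatibility $F_x(M \T_{\cO_X} N) \cong F_x(M) \T_\bC F_x(N)$ together with $F_x(\cO_X) = \bC$ is just the base-change isomorphism for the fiber $M \T_{\cO_X} \kappa(x)$ at the closed point $x$, with the constraints matched in the routine way. Exactness of $F_x$ again uses Lemma~\ref{coherent module is locally free}: every object is locally free, hence flat, so $M \mapsto M \T_{\cO_X} \kappa(x)$ sends short exact sequences in the category to short exact sequences of vector spaces (the obstruction $\mathrm{Tor}_1(\kappa(x), -)$ vanishes). Faithfulness is where connectedness enters decisively: given $\phi \colon M \to N$ with $F_x(\phi) = 0$, its image $I = \im \phi$ is again an object of the category, hence locally free; by exactness $F_x(I) = \im F_x(\phi) = 0$, so the fiber of the locally free sheaf $I$ vanishes at $x$, forcing $I$ to have rank $0$ at $x$. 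Since the rank of a locally free sheaf is locally constant and $X$ is connected, $I = 0$, i.e. $\phi = 0$.

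The main obstacle is really the single point underlying every step, namely that coherent $(G,\cL)$-modules are locally free, which is exactly Lemma~\ref{coherent module is locally free} and where transitivity is used. Granting this, rigidity, exactness, and faithfulness all reduce to standard facts about locally free sheaves on a connected scheme, and the Deligne--Milne criterion then yields that $\Coh^{(G,\cL)}(X)$ is neutral Tannakian with $F_x$ as a fiber functor. I expect no serious difficulty beyond invoking that lemma and the connectedness of $X$ correctly.
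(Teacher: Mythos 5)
Your proof is correct, but it takes a genuinely different route from the paper's. The paper first reduces to $\Coh^{\cL}(X)$ by forgetting the $G$-action, then proves exactness of $F_x$ by composing with the completion functor and invoking Kapranov's equivalence $\Coh^{\cL^{\wedge x}}(X^{\wedge x}) \simeq (\ker \rho)_x\mods$, and proves faithfulness by showing that the map from global flat sections $\Gamma(\cF^\cL) \rightarrow \cF_x$ is injective, reducing via the subsheaf $\cF^{\ker\rho}$ to the classical statement for $\cT_X$-modules. You instead squeeze everything directly out of Lemma~\ref{coherent module is locally free}: local freeness gives flatness, hence $\mathrm{Tor}_1(M'', \kappa(x)) = 0$ and exactness of the fiber functor; and for faithfulness you observe that $\im \phi$ is again an object of the category, hence locally free, so a vanishing fiber at $x$ plus constancy of rank on connected $X$ forces $\im\phi = 0$. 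This is more elementary and self-contained than the paper's argument (no formal completion, no external input beyond the local-freeness lemma), and it works equally well; what the paper's heavier route buys is the identification of the completed category with modules over the inertia Lie algebra at $x$, which is reused elsewhere in the paper (e.g.\ in the proofs of Theorems~\ref{perverse bases for nilcone are the same} and~\ref{thm: basis for slice in affine gr}). Two small points worth making explicit in your write-up: short exact sequences in $\Coh^{(G,\cL)}(X)$ are computed on underlying $\cO_X$-modules (so your Tor argument really applies), and in your computation of $\End(\cO_X)$ the step ``annihilated by all vector fields, hence locally constant'' uses that $X$ is smooth --- which does hold here, since transitivity makes $X$ the unique $\cL$-orbit and orbits are smooth.
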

\begin{proof}
The only nontrivial parts are exactness and faithfulness of $F_x$, which we now establish. First, note that $F_x$ factors through the functor, forgetting the $G$-action $\Coh^{(G, \cL)}(X) \rightarrow \Coh^\cL(X)$, which is exact and faithful, so it suffices to verify. our claims for $\Coh^\cL(X)$.

To show exactness, first consider the completion functor $\Coh^\cL(X) \rightarrow \Coh^{\cL^{\wedge x}}(X^{\wedge x})$, which is exact on coherent sheaves. For $X^{\wedge x}$, it was shown in \cite[Theorem~A.7.3]{Kap07} that $F_x$ factors through an abelian equivalence $\Coh^{\cL^{\wedge x}}(X^{\wedge x}) \simeq (\ker \rho)_x\mods$, and hence is exact.

To establish faithfulness, we show that for any $\cF \in \Coh^\cL(X)$, the natural map $\Gamma(\cF^\cL) \rightarrow \cF_x$ is injective (here $\cF^\cL$ denotes the subsheaf of $\cL$-invariants, so $\Gamma(\cF^\cL)$ stands for global ``flat'' sections). 
Applying this to $\cF = \shhom_{\cO_X}(M, N)$ we get the faithfulness of $F_x$, since 
\[\Hom_\cL(M, N) = \Gamma((\shhom_{\cO_X}(M, N))^\cL).\]
To prove injectivity, we reduce it to the case of $\cT_X$-modules, where it is well known. Note that for a module $\cF$ over an algebroid $(\cL, \rho)$, the subsheaf $\cF^{\ker \rho}$ is naturally a $\cT_X$-module. Thus, we have
\begin{equation*}
\Gamma(\cF^\cL) = \Gamma(((\cF)^{\ker \rho})^{\cT_X}) \hookrightarrow (\cF^{\ker \rho})_x \hookrightarrow \cF_x,
\end{equation*}
where the last embedding follows from the exactness of $F_x$ shown above, applied to the embedding of $\cL$-modules $\cF^{\ker \rho} \hookrightarrow \cF$.
\end{proof}

We denote by $\textrm {Gal}^{(G, \cL)}(X)$ the Tannakian group of the category $\Coh^{(G, \cL)}(X)$. This is a pro-algebraic group, for which there is a canonical equivalence of tensor categories
\[
\Rep \Gal^{(G, \cL)}(X) \simeq \Coh^{(G, \cL)}(X).
\]
For a smooth $X$ and $\cL = \cT_X$, the group $\Gal^{\cT_X}(X)$ is what is sometimes called the \textbf{differential Galois group} of $X$ (hence our notation). In this case $ \Coh^{\cT_X}(X)$ is the category of algebraic local systems on~$X$.

\subsection{Direct and inverse images} \label{subsec: direct and inverse image}
For a general morphism of varieties $\pi: Y \rightarrow X$ and a Lie algebroid $(\cL, \rho)$ on $X$, define
\[
\pi^+ \cL = \pi^* \cL \times_{\pi^* \cT_X} \cT_Y,
\]
locally given by sections 
\[
\{(\ell, v) \in \pi^* \cL \oplus \cT_Y \vert (\pi^* \rho) (\ell) = d\pi(v)  \}.
\]
It is a Lie algebroid on $Y$ with a natural bracket and anchor map, see \cite[2.4.5]{Kae98} for details.

In \cite[3.4 -- 3.5]{Kae98}, the functors of inverse and direct images are defined between categories $\qcoh^{\cL}(X)$ and $\qcoh^{\pi^+ \cL}(Y)$. Note that, at the level of sheaves, 
the direct image $\pi_+$ does not coincide with the usual sheaf-theoretic pushforward, 
but rather involves an intermediate sheaf $\cD_{Y \rightarrow X}$, in style of a similar definition for D-modules:
\begin{equation*}
\pi_+ M = \pi_*(\pi^*\cU(\cL) \otimes_{\cU(\pi^+ \cL)} M).
\end{equation*}

Note that in fact, in \cite{Kae98} one deals only with derived versions of these functors, since, similarly to D-modules, non-derived direct image is not well behaved in general. We write $\pi_+$ in the non-derived sense, since our goal in Lemmata~\ref{lem: pushforward for open embedding},~\ref{direct and inverse images for locally closed embedding} below is to show that in certain good situations, non-derived versions are also nice.

First, we note that in the case of open embedding, $\pi_+$ agrees with the usual quasi-coherent pushforward $\pi_*$ (more formally, pushforward commutes with the functor that forgets the Lie algebroid action):
\begin{lem} \label{lem: pushforward for open embedding}
Suppose $\pi: Y \rightarrow X$ is an open embedding, and $\cL$ is a Lie algebroid on $X$. Then $\pi^* \cL \simeq \pi^+ \cL$. 

If $N$ is a $\pi^*\cL$-module on $Y$, then $\pi_* N \simeq \pi_+ N$ as sheaves. 
\end{lem}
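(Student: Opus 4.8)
The plan is to exploit that an open immersion is an isomorphism at the level of tangent sheaves and that every construction in sight (Kähler differentials, tangent sheaf, universal enveloping sheaf, and $\pi^*$) is local, hence compatible with restriction to an open. First I would recall that for an open embedding $\pi: Y \hk X$ the differential $d\pi: \cT_Y \to \pi^* \cT_X$ is an isomorphism of $\cO_Y$-modules, since $\Om$ and $\cT$ are defined locally and simply restrict along $\pi$. Feeding this into the definition $\pi^+ \cL = \pi^* \cL \times_{\pi^* \cT_X} \cT_Y$, one of the two structure maps to $\pi^*\cT_X$ (namely $d\pi$) is an isomorphism, so the fiber product projects isomorphically onto its first factor. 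This yields a canonical isomorphism $\pi^+ \cL \xrightarrow{\sim} \pi^* \cL$; that it intertwines the anchor maps and brackets is immediate, as both structures on $\pi^+\cL$ are by construction the restrictions of those on $\cL$. This settles the first assertion.

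For the second assertion I would unwind $\pi_+ N = \pi_*\bigl(\pi^*\cU(\cL) \otimes_{\cU(\pi^+ \cL)} N\bigr)$ and show that the transfer module is trivial. The key point is that forming the universal enveloping sheaf commutes with restriction to an open: realizing $\cU(\cL)$ as the quotient of the tensor algebra $T_{\cO_X}(\cO_X \oplus \cL)$ by the relations encoding the bracket and anchor — all constructions compatible with localization — gives $\pi^* \cU(\cL) = \cU(\cL)|_Y \simeq \cU(\cL|_Y) = \cU(\pi^* \cL) \simeq \cU(\pi^+ \cL)$, using the identification of the first part. Moreover, for an open embedding $\pi^{-1}\cO_X = \cO_Y$, so $\pi^*\cU(\cL) = \pi^{-1}\cU(\cL)$ is exactly $\cU(\pi^+\cL)$ viewed as the regular (right) module over itself. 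Consequently $\pi^*\cU(\cL) \otimes_{\cU(\pi^+ \cL)} N \simeq \cU(\pi^+\cL) \otimes_{\cU(\pi^+ \cL)} N \simeq N$ canonically, and therefore $\pi_+ N \simeq \pi_* N$ as sheaves, as claimed.

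The only genuine obstacle is the bookkeeping in the previous paragraph: verifying that $\cU$ commutes with open restriction and, crucially, that the bimodule structure on $\pi^*\cU(\cL)$ is such that the right $\cU(\pi^+\cL)$-action (the one entering the tensor product) is precisely the regular action, so that the residual left action producing the $\cU(\cL)$-module structure downstairs matches the tautological one. These are formal once the construction of $\cU$ is written out explicitly, but care is needed to keep the left and right actions straight. Everything else — the identification $\cT_Y \simeq \pi^*\cT_X$ and the collapse of the fiber product — is immediate from the locality of the constructions involved.
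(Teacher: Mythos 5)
Your proof is correct and follows essentially the same route as the paper: the first claim via the isomorphism $\cT_Y \simeq \pi^*\cT_X$ collapsing the fiber product defining $\pi^+\cL$, and the second via the identification $\pi^*\cU(\cL) \simeq \cU(\pi^*\cL) = \cU(\pi^+\cL)$ (by locality of the enveloping construction), which trivializes the transfer bimodule so that $\pi_+ N \simeq \pi_* N$. Your extra bookkeeping on the left/right actions is a fuller write-up of the same one-line argument the paper gives.
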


\begin{proof}
The first claim follows from the isomorphism $\pi^* \cT_X \simeq \cT_Y$. The second claim follows from the isomorphism $\cD_{Y \rightarrow X} = \pi^*(\cU(\cL)) = \cU(\pi^* \cL)$, since it implies $\pi_+ N \simeq \pi_*(N \T_{\pi^* \cU(\cL)} \cU(\pi^* \cL)) \simeq \pi_* N$. 
\end{proof}

Next, we show that the things simplify when $Z \rightarrow X$ is a locally closed inclusion of an $\cL$-invariant subvariety:
\begin{lem} \label{direct and inverse images for locally closed embedding}
Let $i_Z: Z \rightarrow X$ be a locally closed embedding of an $\cL$-invariant subscheme. 
\begin{enumerate}[a)]
    \item \label{direct and inverse a)} $i_Z^* \cL$ has a natural structure of a Lie algebroid on $Z$ and there is a canonical isomorphism $i_Z^*\cL \simeq i^+ \cL$.

    \item \label{direct and inverse b)} For an $\cL$-module $M$, the pullpack $i^* M$ is naturally a $i_Z^*\cL$-module. This agrees with the definition in \cite[Section 3.4]{Kae98}: under the identification $i_Z^*\cL = i_Z^+\cL$, we have $i_Z^*M \simeq i_Z^+ M$ canonically for any $\cL$-module $M$.

    \item \label{direct and inverse c)} For an $i^*\cL$-module $N$, the pushforward $(i_Z)_* N$ is naturally a $\cL$-module. This agrees with the definition in \cite[Section 3.5]{Kae98}: under identification $i_Z^*\cL = i_Z^+\cL$, we have $(i_Z)_*N \simeq i_+ N$ canonically for any $i^* \cL$-module $N$.
\end{enumerate}
In particular, $(i_Z^*, (i_Z)_*)$ is an adjoint pair of functors between categories $\qcoh^\cL X$ and $\qcoh^{i_Z^* \cL} Z$.
\end{lem}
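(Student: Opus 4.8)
The plan is to reduce everything to the key algebraic consequence of $\cL$-invariance and then descend or transport structures along the closed part of the embedding. First I would factor $i_Z$ as an open embedding followed by a closed embedding; Lemma~\ref{lem: pushforward for open embedding} already disposes of the open case (there $\pi^*\cL = \pi^+\cL$ and $\pi_* = \pi_+$), so it suffices to treat a closed embedding, which I may check locally on an affine chart $X = \Spec R$, $Z = \Spec(R/I)$ with $I = \cI_Z$. The crucial reformulation of $\cL$-invariance I would use is that the anchor carries $\cL$ into vector fields tangent to $Z$, i.e. $\rho(\ell)(I) \subset I$ for all local $\ell \in \cL$.

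For part~\ref{direct and inverse a)}: since each $\rho(\ell)$ preserves $I$, it descends to a derivation $\bar\rho(\ell)$ of $\cO_Z$, giving an $\cO_Z$-linear anchor $\bar\rho \colon i_Z^*\cL = \cL/I\cL \to \cT_Z$. The Lie bracket descends as well: the Leibniz identity $[\ell_1, f\ell_2] = f[\ell_1,\ell_2] + \rho(\ell_1)(f)\ell_2$ together with $\rho(\ell_1)(f) \in I$ for $f \in I$ shows $[\cL, I\cL] \subset I\cL$, so the bracket on $\cL$ passes to $\cL/I\cL$; the Jacobi identity and the algebroid Leibniz rule are then inherited, making $i_Z^*\cL$ a Lie algebroid. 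To match it with the fibered product $i_Z^+\cL = i_Z^*\cL \times_{i_Z^*\cT_X} \cT_Z$ of~\cite{Kae98}, I would use the universal property: the pulled-back anchor $i_Z^*\rho$ and the descended anchor $\bar\rho$ agree after mapping to $i_Z^*\cT_X$ (precisely because $\rho(\ell)$ is tangent to $Z$), producing a canonical map $i_Z^*\cL \to i_Z^+\cL$ splitting the projection. I expect this comparison to be the main obstacle: one must show the fibered product collapses onto $i_Z^*\cL$, which reduces to the injectivity of $d\,i_Z \colon \cT_Z \to i_Z^*\cT_X$ on the relevant sections; this is exactly where tangency (and a local computation in the chart) is needed.

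Parts~\ref{direct and inverse b)} and~\ref{direct and inverse c)} are then formal transports of structure. For the pullback, the same Leibniz-plus-invariance computation shows the $\cL$-action on $M$ preserves $IM$, hence descends to an $i_Z^*\cL$-action on $i_Z^*M = M/IM$; one checks $I\cL$ acts by zero, so this is genuinely an $i_Z^*\cL$-module. For the pushforward, an $\cO_X$-section of $(i_Z)_*N$ is a section of $N$, and I let $\ell \in \cL$ act through its image $\bar\ell \in i_Z^*\cL$; the required $\cO_X$-Leibniz rule $\ell(fn) = \rho(\ell)(f)\,n + f(\ell n)$ is precisely the $i_Z^*\cL$-Leibniz rule for $N$ after reducing $f$ and $\rho(\ell)(f)$ modulo $I$. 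In both cases, the identification with the $i_Z^+$ and $i_+$ of~\cite{Kae98} follows by unwinding the transfer bimodule $\cD_{Z \rightarrow X}$ under the isomorphism $i_Z^*\cL \simeq i_Z^+\cL$ of~\ref{direct and inverse a)}, which makes $\cU(i_Z^*\cL)$ and the relevant tensor products degenerate to the naive $\cO$-pullback and $\cO$-pushforward.

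Finally, for the adjunction I would start from the standard $\cO$-module adjunction $\Hom_{\cO_Z}(i_Z^*M, N) \simeq \Hom_{\cO_X}(M, (i_Z)_*N)$ for a (locally) closed embedding, and verify it restricts to an isomorphism on algebroid-module homomorphisms: a map is $i_Z^*\cL$-linear on the left exactly when the corresponding map is $\cL$-linear on the right, which is immediate from the explicit descriptions of the two actions given above. This yields the asserted adjoint pair between $\qcoh^\cL X$ and $\qcoh^{i_Z^*\cL} Z$.
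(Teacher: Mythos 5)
Your proposal is correct and follows essentially the same route as the paper's proof: part \ref{direct and inverse a)} via tangency of the anchor, which forces sections of $i_Z^+\cL$ to have the form $(\ell, i_Z^*\rho(\ell))$ and collapses the fibered product onto $i_Z^*\cL$, and part \ref{direct and inverse c)} via the degeneration of the transfer bimodule, which is precisely the paper's key isomorphism $i_Z^*\,\cU(\cL) \simeq \cU(i_Z^*\cL)$ (also verified there separately for open and closed embeddings, matching your factorization). Your version merely makes explicit, in terms of the ideal sheaf $\cI_Z$, the descent checks (e.g. $[\cL, \cI_Z\cL] \subset \cI_Z\cL$) that the paper dismisses as ``follows essentially from the definitions.''
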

Note that in \cite{Kae98}, the functor $i_+$ is defined only in the derived setting and under the assumption of $X, Z$ being smooth; in our case of consideration (locally closed inclusion of an invariant subscheme), neither of these requirements is necessary.
\begin{proof}
Part \ref{direct and inverse a)} follows essentially from the definitions. We have an embedding $\cT_Z \subset i_Z^* \cT_X$, and the image of $i_Z^* \rho$ lands in $\cT_Z$. Hence $(i_Z^*\cL, i_Z^* \rho)$ is an algebroid on $Z$. For the same reason, local sections of $i^+ \cL$ have the form $(\ell, i_Z^*\rho(\ell))$, where $\ell$ is a local section of $i_Z^* \cL$, so we get a canonical identification $i_Z^* \cL \simeq i_Z^+ \cL$.

Part \ref{direct and inverse b)} is immediate.

For part \ref{direct and inverse c)}, using that $Z$ is $\cL$-invariant, one can check that the natural surjection $i_Z^* \cU(\cL) \rightarrow \mathcal{U}(i_Z^* \cL)$ of algebras on $Z$ is actually an isomorphism (this can be easily verified separately for open and closed embeddings).
Hence, $(i_Z)_+ N \simeq (i_Z)_*(N \T_{U(i^* \cL)} U(i^* \cL)) \simeq (i_Z)_* N$, and this is compatible with the natural algebroid actions.
\end{proof}

It is straightforward to see that the above lemma generalizes to the case of a HC algebroid $(G, \cL)$ (the functors naturally respect the group equivariance), and we get adjoint functors between the categories $\qcoh^{(G, \cL)} X$ and $\qcoh^{(G, i_Z^* \cL)} Z$, for a $(G, \cL)$-invariant locally closed subscheme $Z$ of $X$.

Abusing notations, when the context is clear, we write $\cL$ instead of $i_Z^* \cL$ to denote the restricted algebroid on $Z$.


We note that the functor $i_Z^*: \qcoh^{(G, \cL)}(X) \rightarrow \qcoh^{(G, \cL)}(Z)$ is right-exact and preserves $\cO$-coherence.

The functor $i_{Z*}: \qcoh^{(G, \cL)} Z \rightarrow\qcoh^{(G, \cL)} X$ is left-exact and preserves coherence in case $i_Z$ being a closed embedding.

We also define the functor $i_Z^!: \qcoh^{(G, \cL)}(X) \rightarrow \qcoh^{(G, \cL)}(Z)$ as follows. For $i_Z$ being an open embedding, it coincides with $i_Z^*$; for $i_Z$ being a closed embedding, it is defined as $\Hom_{\cO_X} (\cO_Z, -)$ (note that $Z$ is $\cL$-invariant here, so $\cO_Z$ is naturally a $(G, \cL)$-module); for $i_Z$ being locally closed, it is defined as the composition of these (exact and left-exact) functors. This functor also preserves coherence.

\subsection{Modules over transitive Lie algebroid in holomorphic setting} \label{subsec: modules over transitive is equivariantization}

Now let $X$ be a complex analytic space. All notions and results of previous subsections have straightforward analogs in this situation (see \cite{Kae98}, where these set-ups are treated in parallel).

For this subsection, we assume $X$ is a connected smooth complex manifold, and $\cL$ is a locally free transitive Lie algebroid of finite rank on $X$. Our goal here is to describe the category $\Coh^{\cL}(X)$ of $\cO_X$-coherent (equivalently, locally free) $\cL$-modules in this setting.

Let $\h = \ker \rho$ be the inertia sheaf. Then $\h$ is a locally trivial bundle of $\cO_X$-linear Lie algebras by \cite[Proposition~3.6]{Mei21}. In particular, the fibers $\h_x$ are isomorphic as Lie algebras for all points $x \in X$.

\subsubsection{$\cL$-modules as a local system of categories}
Here and throughout this section, by ``category'' we always mean ``1-category''.
Recall the notion of a {\it local system of categories}. For the $\infty$-setting, see \cite[Appendix~A]{Lur17}. We make things as explicit as possible for the relevant to us case of 1-categories.
\begin{definition}
Fix a good open cover $\{U_i\}$ of $X$.
The \textbf{local system of categories} on $X$ is the following data: 
\begin{itemize}
\item A category $\cC_i$, assigned to each open $U_i \subset X$; 
\item An equivalence $F_{ij}: \cC_i \rightarrow \cC_j$ for each double intersection $U_i \cap U_j$;
\item An invertible natural transformation $\phi_{ijk}: F_{ij} \circ F_{jk} \xrightarrow{\sim} F_{ik}$ for every triple intersection $U_i \cap U_j \cap U_k$,
\end{itemize}
such that for any quadruple intersection $U_i \cap U_j \cap U_k \cap U_\ell$, the two natural transformations $\phi_{ij\ell} \circ (\id \times \phi_{jk\ell}),~ \phi_{ik\ell} \circ (\phi_{ijk} \times \id): F_{ij} \circ F_{jk} \circ F_{k \ell}  \rightarrow F_{i\ell}$ are equal.

If all $\cC_i$ are (non-canonically) equivalent to a fixed category $\cC$, we say that this is a \textbf{local system with fiber $\cC$}.

\end{definition}

Given a local system of categories defined on some fixed cover, one easily defines its sections over an arbitrary open $U \subset X$, see \cite[Appendix A]{Lur17}.

Our first observation is the following:
\begin{lem} \label{lem: L-modules is local system of categories}
Consider the sheaf of categories on $X$, that assigns to each open $U \subset X$ the category $\Coh^{\cL\vert_U}(U)$ of $\cL$-modules on $U$. This defines a local system of categories with fiber $(\h_x\mods)$ --- the category of finite-dimensional modules over the Lie algebra $\h_x$.
\end{lem}
\begin{proof}
By \cite[Theorem~8.5.1, Corollary~8.5.5]{DZ05}, in some small neighborhood $U$ of any point $x \in X$, the Lie algebroid $\cL$ decomposes as a direct sum $\cL\vert_U \simeq \cT_U \oplus (\cO_U \T \h_x)$\footnote{We chose to cite \cite{DZ05}, because it is explicitly claimed there that this holds in the holomorphic setting; the proof is the same as in other sources that treat only the case of real manifolds}. 
Then one easily sees that $\Coh^{\cL\vert_U}(U) \simeq \h_x\mods$: indeed, any $\cL\vert_U$-module $V$ is trivialized by means of $\cT_U$-action, and is isomorphic to $\cO_{U} \T V_x$, where $V_x$ is a representation of $\h_x$ (see e.g.~\cite[Theorem~6.5.12]{Mac05} for the case of real manifolds; the claim we make here is very easy to verify for the holomorphic setting as well).

For two $U_1, U_2$ as above, an equivalence between modules over trivializations on $\cL\vert_{U_1 \cap U_2}$ is determined by an isomorphism of trivializations of $\cL$ on $U_1$ and $U_2$. The data of cocycle condition is determined from the gluing data of~$\cL$.
\end{proof}

We denote this local system of categories by $\cL$-$\mathfrak{mod}$. The category $\Coh^{\cL}(X)$ is equivalent to the global sections of this local system.

Thus, it is natural to pose a more general question of describing the global sections of a local system of categories. Note that its 0-categorical analog is familiar: for a local system with fiber $V$ (a vector space), its global sections are isomorphic to $V^{\pi_1(X)}$. What follows is a 1-categorical generalization.

\subsubsection{Higher categories notations and constructions} \label{subsubsec: higher categories}
Below we introduce some notations. All of them are standard in the setting of $\infty$-categories, see e.g. \cite{Lur17}. We elaborate and make them explicit in the 1- and 2-categorical cases required for us.

By $\pi_{\infty}(X)$ we mean the homotopy $\infty$-groupoid of $X$ (also sometimes denoted $\operatorname{Sing} X$ --- the singular simplicial set associated to $X$). We denote by $\pi_{\leq 2} (X)$ its 2-truncation. Explicitly, $\pi_{\leq 2} (X)$ is equivalent to the 2-groupoid with one object (recall that $X$ is connected); its 1-morphisms are elements of the fundamental group of $X$ with natural composition; its 2-morphisms are classes of homotopies between loops. We write $\pi_{\leq 1}(X)$ for the fundamental groupoid and use standard notation $\pi_1(X)$, $\pi_2(X)$, or $\pi_1(X, x), \pi_2(X, x)$ for the corresponding homotopy groups.

For any 1-category $\cC$, there is the monoidal category $\Aut \cC$; equivalently, $\Aut \cC$ is a 2-groupoid with one object, see e.g.~\cite[Example~2.12.6]{EGNO15}; that is how we treat it going forward.

Given a 2-groupoid $\cG$ with one object and a 2-functor $\cG \rightarrow \Aut \cC$, we call this an {\bf action of $\cG$ on $\cC$}. Explicitly, an action assigns to each 1-morphism in $\cG$ an auto-equivalence of $\cC$, and to each 2-morphism in $\cG$, an invertible natural transformation between the corresponding auto-equivalences.

1-categories with $\cG$-action form a 2-category. We describe its 1-morphisms. A 1-morphism between 1-categories $\cC$ and $\cD$ with $\cG$-action is a pair $(F, u)$, where $F: \cC \rightarrow \cD$ is a functor such that for any object $c \in \cC$, the two natural actions on the object $F(c)$, coincide; 
$u$ is a family of invertible natural transformations $u_g: F \circ g \rightarrow g \circ F$ for any 1-morphism $g$ of $\cG$, such that the natural diagram 
\begin{equation} \label{eq: diag for composition of nat transforms}
\begin{tikzcd}
	{F \circ g \circ h} && {g \circ h \circ F} \\
	& {g \circ F \circ h}
	\arrow["{u_{gh}}", from=1-1, to=1-3]
	\arrow["{u_{g} \circ h}"', from=1-1, to=2-2]
	\arrow["{g \circ u_{h}}"', from=2-2, to=1-3]
\end{tikzcd}
\end{equation}
commutes for all $g, h$.

Given an action of $\cG$ on a 1-category $\cC$ as above, one can form the {\it equivariantization} (a.k.a. category of equivariant objects), denoted $\cC^\cG$. It can be defined as the category of 1-morphisms $\Hom_{\cG}(\mathrm{Triv}, \cC)$ in the 2-category of 1-categories with $\cG$-action, described above. Here $\operatorname{Triv}$ is the trivial category with one object. Explicitly, it is described as follows.

$\cC^\cG$ is a 1-category; its objects are of the form $(c, u)$, where $c \in \cC$ is such that the image of the 2-morphisms group $\Hom_{\cG}(\id, \id)$ between identity 1-morphisms of $\cG$ in $\Aut_{\cC}(c)$ (given by the action), is trivial; $u = \{ u_g: c \rightarrow g(c) \}$ is a family of isomorphisms for all 1-morphisms $g$ of $\cG$, with an analog of diagram~\eqref{eq: diag for composition of nat transforms} to hold. The morphisms in $\cC^\cG$ are defined to be commuting with~$\{u_g\}$.
This structure is closely analogous to a more standard equivariantization w.r.t. a (1-)group action, see e.g. \cite[Definition~2.7.2]{EGNO15}).

\subsubsection{Local systems of 1-categories}
Let $\cC$ be a category. 

\begin{lem} \label{lem: local systems are 2-reps of pi_2}
Local systems of categories on $X$ with fiber $\cC$ are in correspondence with actions of $\pi_{\leq 2}(X)$ on $\cC$, that is 2-functors $\pi_{\leq 2} (X) \rightarrow \Aut \cC$.

Moreover, the 2-category of local systems of categories on $X$ is equivalent to the 2-category of categories with a $\pi_{\leq 2}(X)$-action.
\end{lem}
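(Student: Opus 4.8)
The plan is to realize both sides of the desired equivalence as functors out of a homotopy type and then apply a truncation principle. Throughout, I write $\mathrm{Cat}$ for the $2$-category of $1$-categories, functors, and natural transformations, and $\mathrm{Cat}^\simeq$ for its maximal sub-$2$-groupoid (categories, equivalences, invertible natural transformations); viewed this way, the one-object $2$-groupoid $\Aut \cC$ is exactly the connected component of $\cC$ inside $\mathrm{Cat}^\simeq$, which I denote $B\Aut\cC$. I would prove the ``moreover'' statement in full and then deduce the first assertion by restricting to the component determined by a fixed fiber $\cC$.

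First I would reformulate local systems as functors out of the fundamental $\infty$-groupoid. Fixing the good cover $\{U_i\}$, the data $(\cC_i, F_{ij}, \phi_{ijk})$ subject to the quadruple-overlap relation is precisely normalized descent (\v Cech) data for a $\mathrm{Cat}^\simeq$-valued locally constant sheaf, and a morphism $(F,u)$ of local systems is precisely a morphism of such descent data. Since the cover is good, its \v Cech nerve models the homotopy type of $X$, so the straightening/monodromy equivalence of \cite[Appendix~A]{Lur17} identifies the $2$-category of local systems of categories on $X$ with the functor $2$-category
\[
\mathrm{Fun}\big(\pi_\infty(X), \mathrm{Cat}\big),
\]
where objects land automatically in $\mathrm{Cat}^\simeq$ (because $\pi_\infty(X)$ is a groupoid, so invertible morphisms go to equivalences), and those with fiber $\cC$ correspond to functors valued in $B\Aut\cC$.

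The next step is truncation. The $2$-category $\mathrm{Cat}$ is $2$-truncated: its hom-objects are $1$-categories and it carries no nontrivial $k$-morphisms for $k>2$. Hence restriction along the Postnikov map $\pi_\infty(X) \to \pi_{\leq 2}(X)$ induces an equivalence
\[
\mathrm{Fun}\big(\pi_{\leq 2}(X), \mathrm{Cat}\big) \xrightarrow{\ \sim\ } \mathrm{Fun}\big(\pi_\infty(X), \mathrm{Cat}\big),
\]
which is the standard fact that mapping into an $n$-truncated target sees only the $n$-truncation of the source (here $n=2$): the homotopies of $X$ of dimension $\geq 3$ have nowhere nontrivial to be sent. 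Finally I would unwind the left-hand side: since $X$ is connected, $\pi_{\leq 2}(X)$ is a one-object $2$-groupoid, so a functor $\pi_{\leq 2}(X)\to\mathrm{Cat}$ is a single category $\cC$ together with a $2$-functor $\pi_{\leq 2}(X)\to \Aut\cC$, i.e.\ exactly an action of $\pi_{\leq 2}(X)$ on $\cC$; a pseudonatural transformation unwinds (the structure $2$-cells being invertible because $\pi_{\leq 2}(X)$ is a groupoid) to precisely a pair $(F,u)$ satisfying diagram~\eqref{eq: diag for composition of nat transforms}, and a modification to a $2$-morphism of categories-with-action. Composing these identifications yields the claimed equivalence of $2$-categories, and restricting to the component $B\Aut\cC$ gives the first assertion.

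The hard part will be the faithful execution of the first step, not the formal truncation argument. The genuine content is checking that the paper's explicit, cover-dependent Definition really computes the homotopy-invariant $\mathrm{Fun}(\pi_\infty(X),\mathrm{Cat})$: that a good cover's \v Cech nerve presents the correct homotopy type, that the cocycle/coherence data matches descent data for a locally constant stack of categories, and that the whole construction is independent of the chosen good cover up to canonical equivalence. All of this is subsumed by the cited $\infty$-categorical machinery, but transcribing it into the strict $1$/$2$-categorical bookkeeping used here --- in particular matching the quadruple-intersection relation with the associativity coherence of the action, and the $u_g$-diagram with pseudonaturality --- is where essentially all the work sits, and I would isolate these compatibilities and verify them by direct comparison of the cocycle conditions on the generators and relations of $\pi_{\leq 2}(X)$.
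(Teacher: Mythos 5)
Your proposal is correct and follows essentially the same route as the paper: the paper's proof likewise invokes \cite[Theorem~A.1.15]{Lur17} to identify local systems with fiber $\cC$ with functors $\pi_\infty(X) \to \Aut\cC$, and then observes that since $\cC$ is a 1-category the functor factors through the truncation $\pi_{\leq 2}(X)$. Your write-up merely spells out in more detail the descent-data matching and the unwinding of pseudonatural transformations that the paper leaves implicit.
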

Of course, this lemma should be viewed as a categorical analog of the fact that local systems with fiber $V$ (a vector space) are in correspondence with representations of $\pi_1(X)$ on $V$. Note that as we lift the categorical level, $\pi_2(X)$ also starts playing a role.
\begin{proof}
For $\cC$ being a $\infty$-category, it is proven in \cite[Theorem~A.1.15]{Lur17} that local systems with fiber $\cC$ are in correspondence with $\infty$-functors $\pi_{\infty}(X) \rightarrow \Aut \cC$. When  $\cC$ happens to be a 1-category, the functor factors through the 2-truncation $\pi_{\leq 2}(X)$, and we obtain the claim.
\end{proof}

We now describe the global sections of a local system in the above terms.

\begin{lem} \label{lem: global sections of loc sys is equivariantization}
Let $\mathfrak L$ be a local system of categories with fiber $\cC$ on $X$. Under the correspondence of Lemma~\ref{lem: local systems are 2-reps of pi_2}, the global sections of $\mathfrak L$ are equivalent to the equivariantization: $\Gamma(\mathfrak L) \simeq \cC^{\pi_{\leq 2}(X)}$.
\end{lem}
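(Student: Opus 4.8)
The plan is to realize the global sections as a homotopy limit over the homotopy type of $X$ and then unwind this limit, at the level of $1$-categories, into the explicit equivariantization of Section~\ref{subsubsec: higher categories}. This is the exact categorification of the statement that the global flat sections of a local system with fiber a vector space $V$ are the invariants $V^{\pi_1(X)}$, i.e. the limit of the associated $\pi_1(X)$-representation.

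First I would pass through the $\infty$-categorical formalism of \cite[Appendix~A]{Lur17} already used in the proof of Lemma~\ref{lem: local systems are 2-reps of pi_2}. Under that correspondence $\mathfrak{L}$ is encoded by a functor $\pi_\infty(X) \to \Aut\cC$ (which factors through $\pi_{\leq 2}(X)$), and the global sections of a locally constant sheaf of categories are computed as the limit over the homotopy $\infty$-groupoid, $\Gamma(\mathfrak{L}) \simeq \lim_{\pi_\infty(X)} \mathfrak{L}$. Because $\mathfrak{L}$ takes values in $1$-categories (equivalently, $\Aut\cC$ is a $2$-groupoid), the diagram and hence the limit factor through the $2$-truncation $\pi_{\leq 2}(X)$, and no information in homotopical degrees $\geq 3$ contributes.

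Next I would compute this limit by hand, using that $X$ is connected, so $\pi_{\leq 2}(X)$ is a one-object $2$-groupoid. The limit of a functor out of a one-object groupoid is exactly the homotopy fixed points, and I would spell out its $1$-categorical incarnation: an object is a value $c \in \cC$ at the unique object together with isomorphisms $u_g \colon c \to g(c)$ indexed by the $1$-morphisms $g \in \pi_1(X)$, subject to the cocycle constraint expressed by the commuting triangle \eqref{eq: diag for composition of nat transforms}; the $2$-morphisms of $\pi_{\leq 2}(X)$, that is the elements of $\pi_2(X)$, force the induced automorphisms of $c$ to be trivial. Matching term by term, this is precisely the description of $\cC^{\pi_{\leq 2}(X)}$ recalled in Section~\ref{subsubsec: higher categories}, and the morphisms on both sides are exactly those compatible with the family $\{u_g\}$, yielding the asserted equivalence.

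The main obstacle is the bookkeeping in this last unwinding: one must verify that the $\pi_2$-data enters exactly as the triviality of the $\Hom_{\cG}(\id,\id)$-action in the definition of the equivariantization, and that the associator $\phi_{ijk}$ of the local system produces precisely the cocycle \eqref{eq: diag for composition of nat transforms} for the family $\{u_g\}$ rather than a twisted variant. Since all higher coherences above level $2$ vanish for a system valued in $1$-categories, the limit genuinely truncates, so this is a finite, if somewhat involved, diagram chase rather than an infinite tower of compatibilities; I would carry it out concretely on a good cover, transporting a germ at a basepoint along loops to produce the $u_g$ and reading off the constraints from triple and quadruple overlaps.
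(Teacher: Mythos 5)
Your proposal is correct and takes essentially the same route as the paper: the homotopy limit over $\pi_{\infty}(X)$ (factoring through $\pi_{\leq 2}(X)$) that you compute is precisely what the paper expresses as $\Hom_{\pi_{\leq 2}(X)}(\operatorname{Triv}, \cC)$, morphisms from the trivial local system, which is by definition the equivariantization $\cC^{\pi_{\leq 2}(X)}$. Your explicit term-by-term unwinding of the fixed-point data (the $u_g$, the cocycle diagram~\eqref{eq: diag for composition of nat transforms}, and the triviality of the $\pi_2$-action) is exactly the content the paper delegates to its Section~\ref{subsubsec: higher categories}.
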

The 0-categorical analog of this claim is familiar: the global flat sections of a local system with fiber $V$ (vector space) are isomorphic to invariants $V^{\pi_1(X)}$.
\begin{proof}

Taking global sections is the same as considering 1-morphisms from the trivial local system of trivial categories. Hence, we get $\Gamma(\mathfrak L) \simeq \Hom_{\pi_{\leq 2}(X)}(\operatorname{Triv}, \cC) \simeq \cC^{\pi_{\leq 2}(X)}$ (see Section~\ref{subsubsec: higher categories} for definitions and explanations).
\end{proof}

\subsubsection{Main result}
Summing up all of the above and returning to the initial question, we get:
\begin{thm} \label{thm: L-modules is equivariantization}
The category $\Coh^{\cL}(X)$ is equivalent to the equivariantization of the category $\h_x\mods$ under the action of 2-groupoid $\pi_{\leq 2}(X)$, determined by $\cL$:
\begin{equation*}
\Coh^\cL(X) \simeq (\h_x\mods)^{\pi_{\leq 2}(X)}.
\end{equation*}
\end{thm}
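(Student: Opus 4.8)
The plan is to obtain the desired equivalence as the composite of the three lemmas established earlier in this subsection; the statement of the theorem is essentially their concatenation, and the only genuine task is to check that the intermediate identifications are mutually compatible.

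First I would recall that, by Lemma~\ref{lem: L-modules is local system of categories}, the assignment $U \mapsto \Coh^{\cL|_U}(U)$ defines a local system of categories on $X$, denoted $\cL$-$\mathfrak{mod}$, whose fiber is the category $\h_x\mods$ of finite-dimensional representations of the inertia Lie algebra $\h_x$ at a fixed base point $x \in X$; as noted immediately after that lemma, its global sections recover the category of interest, so that $\Coh^\cL(X) \simeq \Gamma(\cL\text{-}\mathfrak{mod})$. Next I would invoke Lemma~\ref{lem: local systems are 2-reps of pi_2} to reinterpret $\cL$-$\mathfrak{mod}$ as an action of the $2$-groupoid $\pi_{\leq 2}(X)$ on the fiber $\h_x\mods$. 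Concretely, this is the action \emph{determined by $\cL$}: a homotopy class of loops based at $x$ acts by the autoequivalence obtained from parallel transport of the local trivializations $\cL|_U \simeq \cT_U \oplus (\cO_U \T \h_x)$ used in the proof of Lemma~\ref{lem: L-modules is local system of categories}, while a class of homotopies between loops (an element of $\pi_2$) supplies the corresponding invertible natural transformation. Finally I would apply Lemma~\ref{lem: global sections of loc sys is equivariantization}, which identifies the global sections of any local system with fiber $\cC$ with the equivariantization $\cC^{\pi_{\leq 2}(X)}$; taking $\cC = \h_x\mods$ gives $\Gamma(\cL\text{-}\mathfrak{mod}) \simeq (\h_x\mods)^{\pi_{\leq 2}(X)}$. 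Composing the two equivalences yields the theorem.

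The one point I would take care to verify is that the fiber appearing in Lemma~\ref{lem: L-modules is local system of categories} and the fiber over which one equivariantizes in Lemma~\ref{lem: global sections of loc sys is equivariantization} are identified compatibly --- that is, that the $\pi_{\leq 2}(X)$-action produced by transporting the fiber functor $M \mapsto M_x$ along paths is exactly the action recorded by the gluing cocycle of $\cL$-$\mathfrak{mod}$. This is a base-point bookkeeping matter rather than a computation: since $X$ is connected, all choices of $x$ give equivalent fibers, and the coherence data of the local system (the transformations $\phi_{ijk}$ on triple overlaps, together with their compatibility on quadruple overlaps) is precisely what Lemma~\ref{lem: local systems are 2-reps of pi_2} converts into the $2$-functor $\pi_{\leq 2}(X) \to \Aut(\h_x\mods)$. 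Consequently I do not expect any serious obstacle at this final step: all the real content --- the local splitting of $\cL$ from \cite{DZ05}, the higher monodromy statement of \cite[Theorem~A.1.15]{Lur17}, and the description of global sections as $1$-morphisms out of the trivial local system --- has already been carried out in the preceding lemmas, and the theorem is their formal consequence.
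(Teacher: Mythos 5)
Your proposal is correct and matches the paper's own argument, which is exactly the composition of Lemma~\ref{lem: L-modules is local system of categories} (realizing $\Coh^{\cL}(X)$ as global sections of the local system $\cL$-$\mathfrak{mod}$ with fiber $\h_x\mods$) with Lemma~\ref{lem: global sections of loc sys is equivariantization} under the correspondence of Lemma~\ref{lem: local systems are 2-reps of pi_2}. Your extra base-point compatibility check is a reasonable piece of diligence, but the paper treats the theorem as the immediate formal consequence you describe.
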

\begin{proof}
This follows by applying Lemma~\ref{lem: global sections of loc sys is equivariantization} to the local system of categories $\cL$-$\mathfrak{mod}$, described in Lemma~\ref{lem: L-modules is local system of categories}.
\end{proof}

One can note that the equivalence in Theorem~\ref{thm: L-modules is equivariantization} is actually an equivalence of tensor (and, more generally, Tannakian) categories.

\begin{rem}
Let us try to make an equivalence in Theorem~\ref{thm: L-modules is equivariantization} as explicit as possible, unraveling definitions from Section~\ref{subsubsec: higher categories}. First, take the universal cover $\nu: \wti X \rightarrow X$. Then $\nu^* \cL$ is a Lie algebroid, and there is an equivalence between $\cL$-modules on $X$ and $\pi_1(X)$-equivariant $\nu^* \cL$-modules on $\wti X$. In particular, $\Coh^\cL (X)$ is the $\pi_1(X)$-equivariantization of $\Coh^{\nu^* \cL}(\wti X)$ (note that this is just an equivariantization under the action of a group, as defined e.g. in \cite[Chapters~2.7,~4.15]{EGNO15}, which is conceptually easier than the 2-groupoid equivariantization).

Now, on $\wti X$, we have the restriction to point functor $\Coh^{\nu^* \cL}(\wti X) \rightarrow \h_x\mods$, which is fully faithful. The group $\pi_2(\wti X, x)$ acts by automorphisms of the identity endofunctor of $\h_x\mods$, and $\Coh^{\nu^* \cL}(\wti X)$ is identified with the full subcategory of $\h_x\mods$, consisting of objects $V$, for which the image of $\pi_2(\wti X, x)$ in $\Aut_{\h_x} V$ is trivial.

Let us also point out that due to Whitehead, the 2-groupoid $\pi_{\leq 2}(X)$ admits an explicit combinatorial model in terms of a cross module, see~\cite{Noo07} for an overview.
\end{rem}

\begin{rem} \label{rem: action of pi_2 on vect is trivial}
Let us verify that in the case $\cL = \cT_X$, Theorem~\ref{thm: L-modules is equivariantization} indeed reduces to the well-known fact $\Coh^{\cT_X}(X) \simeq \Rep \pi_1(X)$.

In this case $\h_x\mods \simeq \Vect$. Automorphisms of the identity endofunctor of $\Vect$ are in correspondence with nonzero scalars. So if the action of $\pi_2(X)$ on the identity functor $\id_{\Vect}$, appearing in Theorem~\ref{thm: L-modules is equivariantization} is nontrivial, it acts nontrivially on every object of $\Vect$. This would mean that $\Vect^{\pi_{\leq 2}(X)}$ is empty. However, $\Coh^{\cT_X}(X)$ is nonempty, since $\cO_X$ is an object in it. It follows that the action of $\pi_2(X)$ is trivial, and our equivalence reduces to $\Coh^{\cT_X}(X) \simeq \Vect^{\pi_1(X)}$, as required.
\end{rem}

\begin{rem}
We stated the result for holomorphic (complex-analytic) setting due to our later applications, but in fact the proof works just as well for the case of smooth real manifolds. Our result might be of independent interest in this context. 
\end{rem}

\begin{rem}
Theorem~\ref{thm: L-modules is equivariantization} can be thought of as an analogy with the description of bundles, equivariant with respect to a transitive action of a simply connected Lie group $H$. This category is equivalent to the category of representations of the stabilizer subgroup $H_x$ at a point $x \in X$. 
Note that $\pi_1(H) = \pi_2(H) = 0$ implies that $\pi_1(X) = \pi_0(H_x)$, $\pi_2(X) = \pi_1(H_x)$. 
Under this analogy, $\h_x$ appearing in Theorem~\ref{thm: L-modules is equivariantization}, should be thought of as the Lie algebra of $H_x$, and one needs to add corrections, involving $\pi_1(X) = \pi_0(H_x)$ and $\pi_2(X) = \pi_1(H_x)$, which mimic the fact that $H_x$ could be non connected and non simply connected.


Assume $\pi_2(X) = 0$. Then Theorem~\ref{thm: L-modules is equivariantization} reduces to the equivalence $\Coh^\cL(X) \simeq (\h_x\mods)^{\pi_1(X, x)}$.
Note that (in the smooth real setting) an obstruction to integrability of $\cL$ to a Lie groupoid is closely related to $\pi_2(X)$, see \cite{CF03}. It would be interesting to investigate how this ties into the picture described above.
\end{rem}

\subsection{Regular singularities and GAGA} \label{subsec: gaga}
For D-modules, the theory of regular singularities is a classical subject initiated by Deligne, see \cite{Del70}. This theory was generalized to Lie algebroids by K\"allstr\"om, see \cite{Kae98}.

We fix a Lie algebroid $\cL$ on a complex algebraic variety $X$. Let $X^\an$ be the corresponding complex analytic space, and let $\cL^\an$ be the corresponding holomorphic Lie algebroid. We have a natural analytification functor $\Coh^\cL X \rightarrow \Coh^{\cL^\an} X^\an$, $\cF \mapsto \cF^\an$. The following result will be sufficient for the purposes of the present paper. 
\begin{prop} \label{prop: gaga for open of codim 2}
Suppose $\ov X$ is a normal projective complex algebraic variety, and $X \subset \ol X$ is open with complementary of codimension at least 2. Suppose ${(G, \cL)}$ is a HC Lie algebroid on $\ol X$, and $X$ is contained in its open orbit. Then the analytification functor
\begin{equation*}
\Coh^{(G, \cL)} X \rightarrow \Coh^{(G^\an, \cL^\an)} X^\an
\end{equation*}
is fully faithful.

Moreover, if $M \in \Coh^{(G, \cL)} X$, and $N^\an \subset M^\an$ is its holomorphic submodule, then there exists (algebraic) $N \in \Coh^{(G, \cL)} X$ whose analytification is isomorphic to $N^\an$.
\end{prop}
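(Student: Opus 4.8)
The plan is to reduce both statements to Serre's GAGA on the projective variety $\ov X$, transporting information between $X$ and $\ov X$ across the codimension-$\geq 2$ complement. Write $j: X \hookrightarrow \ov X$ for the open embedding. Since $X$ lies in the open orbit, the restricted algebroid $\cL|_X$ is transitive, so by Lemma~\ref{coherent module is locally free} every object of $\Coh^{(G,\cL)} X$ is locally free; the same holds analytically. For a locally free $\cG$ on $X$, normality of $\ov X$ together with $\codim(\ov X \setminus X) \geq 2$ makes $j_* \cG$ a reflexive coherent sheaf on $\ov X$, and by Lemma~\ref{lem: pushforward for open embedding} it is again a $(G,\cL)$-module, which I denote $\bar\cG = j_+ \cG$. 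The analogous facts hold for the analytic space $\ov X^\an$, which is normal with $\ov X^\an \setminus X^\an$ of codimension $\geq 2$, and one checks that analytification commutes with this extension, $(\bar\cG)^\an \simeq j^\an_* (\cG^\an)$, since both sides are reflexive coherent sheaves on $\ov X^\an$ restricting to $\cG^\an$ on $X^\an$.

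For full faithfulness I would first reduce $\Hom$ to flat sections exactly as in the proof of the Tannakian Lemma: setting $\cG = \shhom_{\cO_X}(M, N)$, a locally free $(G,\cL)$-module, one has $\Hom_{(G,\cL)}(M, N) = \Gamma(X, \cG^\cL)^G$, and likewise analytically. It therefore suffices to compare global $\cL$-invariants of $\cG$ with $\cL^\an$-invariants of $\cG^\an$. Writing invariants as the kernel of the action map $\cG \to \shhom_{\cO_X}(\cL, \cG)$, I would show that a flat global section of $\cG$ on $X$ extends uniquely to a global section of $\bar\cG$ on $\ov X$ (as $\Gamma(\ov X, \bar\cG) = \Gamma(X, \cG)$) and stays flat, because its defect lies in the torsion-free sheaf $\shhom_{\cO_X}(\cL, \bar\cG)$ and vanishes on the dense open $X$; hence $\Gamma(X, \cG^\cL) = \Gamma(\ov X, \bar\cG^\cL)$, and symmetrically on the analytic side. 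On the projective $\ov X$, GAGA identifies the global sections of the coherent sheaves $\bar\cG$ and $\shhom_{\cO_X}(\cL, \bar\cG)$ with their analytic counterparts compatibly with the (algebraic) action map, so the kernels agree. Combining these identifications and taking $G$-invariants (analytification being $G$-equivariant) yields $\Hom_{(G,\cL)}(M, N) \xrightarrow{\sim} \Hom_{(G^\an, \cL^\an)}(M^\an, N^\an)$.

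For the extension of submodules I would extend $M$ to $\bar M = j_+ M$ on $\ov X$ and work with $\bar M^\an \simeq j^\an_* (M^\an)$. As $N^\an$ is an $\cL^\an$-submodule it is locally free, so its analytic direct image $\bar N^\an := j^\an_*(N^\an)$ is a coherent subsheaf of $\bar M^\an$ on the compact space $\ov X^\an$. Since GAGA is an equivalence of abelian categories $\Coh(\ov X) \simeq \Coh(\ov X^\an)$, the subobject $\bar N^\an \hookrightarrow \bar M^\an$ descends to a unique coherent subsheaf $\bar N \hookrightarrow \bar M$ over $\ov X$. This $\bar N$ is automatically a $(G,\cL)$-submodule: the composite algebraic map $\cL \otimes \bar N \to \bar M \to \bar M / \bar N$ analytifies to zero (as $\bar N^\an$ is $\cL^\an$-stable), hence vanishes by the faithfulness part of GAGA, and similarly for the $G$-action. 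Restricting back, $N := i_X^* \bar N \in \Coh^{(G,\cL)} X$ satisfies $N^\an = \bar N^\an|_{X^\an} = N^\an$, as required.

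The main obstacle I anticipate is the interface between the codimension-$2$ extension and the analytic category: establishing that the analytic direct image $j^\an_* N^\an$ is coherent on $\ov X^\an$ (the holomorphic analogue of the reflexive-extension theorem across a codimension-$\geq 2$ analytic subset of a normal space) and that analytification commutes with these extensions. The GAGA step itself is standard once everything is phrased over the projective $\ov X$; the genuine subtlety lies entirely in the passages $X \leftrightarrow \ov X$ and $X^\an \leftrightarrow \ov X^\an$, which is precisely where normality and the codimension hypothesis are used.
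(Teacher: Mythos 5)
Your proposal is correct in substance and follows the same global strategy as the paper --- extend across the codimension-$\geq 2$ complement to the projective closure, apply GAGA there, restrict back --- but it differs genuinely in how the algebroid structure is transported through GAGA. The paper runs the argument through two black boxes: K\"allstr\"om's GAGA for Lie algebroid modules on projective varieties \cite[Theorem~4.1.1]{Kae98}, which immediately gives full faithfulness of analytification at the level of $(G,\cL)$-module Homs on $\ol X$ and, via its essential surjectivity, settles the submodule claim once $(j^\an)_* N^\an$ is known to be coherent; and Serre's prolongement theorem \cite[Section~6, Remarque~2)]{Ser66} for the commutation $(j_* M)^\an \simeq (j^\an)_*(M^\an)$. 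Homs over $X$ and over $\ol X$ are then identified by the adjunction $\Hom_{(G,\cL)}(M,N) \simeq \Hom_{(G,\cL)}(j_*M, j_*N)$ together with Lemma~\ref{lem: pushforward for open embedding}, rather than by your flat-section analysis. You instead use only classical coherent-sheaf GAGA and transfer the algebroid structure by hand: Hom as flat sections of the inner $\shhom$, extension of flat sections across the boundary via torsion-freeness, and descent of the submodule structure by applying faithfulness of GAGA to the composite $\cL \otimes \bar N \rightarrow \bar M/\bar N$. What your route buys is independence from algebroid GAGA (only Serre's classical theorem is invoked on $\ol X$); what the paper's route buys is brevity, since \cite[Theorem~4.1.1]{Kae98} proves in general exactly what you re-derive in this special case.

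Two points to tighten. First, the analytic input you flagged as the main obstacle --- coherence of $(j^\an)_*$ of a locally free sheaf across an analytic subset of codimension $\geq 2$, and its identification with the analytification of the algebraic pushforward --- is genuinely needed and is precisely Serre's theorem \cite{Ser66}; the algebraic counterpart (coherence of $j_*M$ for $M$ torsion-free) is \cite[Lemma~5.1.1~(2)]{Kae98}, a variant of Grothendieck's finiteness theorem \cite{Gro68}. So your anticipated obstacle is a matter of citation, not a gap. Second, your kernel-of-action-map comparison and the vanishing argument for $\cL \otimes \bar N \rightarrow \bar M/\bar N$ tacitly treat $\cL$ as coherent, so that $\shhom_{\cO_{\ol X}}(\cL, \bar\cG)$ is coherent and analytification commutes with $\shhom$ and with the relevant tensor products; the proposition as stated allows quasi-coherent $\cL$. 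This is harmless in the paper's applications (the algebroids there are coherent) and can be repaired by writing $\cL$ as a filtered union of coherent subsheaves, but it deserves an explicit remark.
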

Vaguely speaking, this proposition says that any coherent $\cL$-module on $X$ in the above situation has regular singularities.

\begin{proof}
Denote by $j: X \hookrightarrow \ol X$ the open embedding. Take $M, N \in \Coh^{(G, \cL)} X$. By adjunction, we have 
\[
\Hom_{(G, \cL)}(j_* M, j_* N) \simeq \Hom_{(G, \cL)}(j^* j_* M, N) \simeq \Hom_{(G, \cL)}(M, N),
\]
where we use Lemma~\ref{lem: pushforward for open embedding}, which states that the algebroid direct and inverse images coincide with the usual quasi-coherent versions for an open embedding. 
Similarly in the analytic category, we have 
\[
\Hom_{(G^\an, \cL^\an)}((j^\an)_* M^\an, (j^\an)_* N^\an) \simeq \Hom_{(G^\an, \cL^\an)}(M^\an, N^\an).
\]

By Lemma~\ref{coherent module is locally free}, $M$ is locally free, hence torsion-free. Using the codimension $\geq 2$ assumption,  \cite[Lemma 5.1.1 (2)]{Kae98} implies that in the algebraic category, $j_* M$ is $\cO_{\ol X}$-coherent (this is a variant of the Grothendieck finiteness theorem \cite[VIII, Corollaire~2.3]{Gro68}).
By a result of Serre \cite[Section~6, Remarque~2)]{Ser66}, it follows that $(j_* M)^\an \simeq (j^\an)_*(M^\an)$. Similarly for~$N$.

On projective $\ol X$, a variant of GAGA \cite{Ser56} for Lie algebroids \cite[Theorem~4.1.1.]{Kae98}, in particular, implies that the analytification is fully faithful, hence $\Hom_{(G, \cL)}(j_* M, j_*N) \simeq \Hom_{(G^\an, \cL^\an)}((j_* M)^\an, (j_*N)^\an)$ (strictly speaking, \cite{Kae98} deals only with the case $G = \id$, but the generalization to the equivariant setting is straightforward).

Combining all of the above, we have:
\begin{multline*}
\Hom_{(G, \cL)}(M, N) \simeq \Hom_{(G, \cL)}(j_* M, j_*N) \simeq \Hom_{(G^\an, \cL^\an)}((j_* M)^\an, (j_*N)^\an) \simeq \\
\Hom_{(G^\an, \cL^\an)}((j^\an)_* M^\an, (j^\an)_* N^\an) \simeq \Hom_{(G^\an, \cL^\an)}(M^\an, N^\an),
\end{multline*}
and fully faithfulness is proved.

Let us prove the second claim. From the inclusion $N^\an \subset M^\an$ and the left-exactness of direct image, we have $(j^\an)_* N^\an \subset (j^\an)_* M^\an \simeq (j_* M)^\an$; the last isomorphism, as well as $\cO$-coherence of $(j_* M)^\an$ is justified above. It follows that $(j^\an)_* N^\an$ is $\cO$-coherent, and therefore, due to \cite[Theorem~4.1.1]{Kae98}, lies in the essential image of the analytification functor on $\ol X$. It follows that its restriction to $X$ is also algebraic, as required.
\end{proof}

\begin{cor} \label{cor: analytic galois to algebraic is faithfully flat gaga}
In the setup of Proposition~\ref{prop: gaga for open of codim 2}, the homomorphism of pro-algebraic groups $\Gal^{\cL^\an}(X^\an) \rightarrow \Gal^\cL(X)$, Tannakian-dual to the analytification functor on $X$, is faithfully flat.
\end{cor}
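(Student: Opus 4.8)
The plan is to deduce the statement from the standard dictionary between tensor functors and homomorphisms of Tannakian groups, for which Proposition~\ref{prop: gaga for open of codim 2} supplies exactly the two geometric inputs needed. First I would note that since $X$ is contained in the open orbit of $\cL$, the restricted algebroid is transitive, so both $\Coh^{(G, \cL)} X$ and $\Coh^{(G^\an, \cL^\an)} X^\an$ are Tannakian categories (by the analytic analogue of the Lemma of Section~\ref{subsection differential galois group}), with fiber functors given by the fiber $F_x$ at a fixed closed point $x \in X$. The analytification functor $\omega: \cF \mapsto \cF^\an$ is a tensor functor, and it is compatible with the chosen fiber functors because the fiber at a closed point is unchanged by analytification, i.e. $F_x^\an \circ \omega \simeq F_x$. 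Hence Tannakian duality produces the homomorphism $f: \Gal^{\cL^\an}(X^\an) \rightarrow \Gal^{\cL}(X)$ and reduces the problem to a criterion on $\omega$.

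The criterion I would invoke is \cite[Proposition~2.21(a)]{DM82}: $f$ is faithfully flat if and only if $\omega$ is fully faithful and, for every $M \in \Coh^{(G,\cL)} X$, every subobject of $\omega(M) = M^\an$ is isomorphic, as a subobject, to $\omega(N)$ for some subobject $N \subset M$. Full faithfulness is precisely the first assertion of Proposition~\ref{prop: gaga for open of codim 2}. For the second condition, let $W \subset M^\an$ be a holomorphic submodule; the second assertion of Proposition~\ref{prop: gaga for open of codim 2} provides an algebraic $N \in \Coh^{(G, \cL)} X$ together with an isomorphism $N^\an \simeq W$.

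It remains to upgrade this abstract isomorphism to an honest inclusion of subobjects, which is the one point requiring a small argument and which I expect to be the main obstacle. Composing $N^\an \simeq W$ with $W \hookrightarrow M^\an$ gives a morphism $N^\an \rightarrow M^\an$; by full faithfulness of $\omega$ it is the analytification of a unique morphism $g: N \rightarrow M$. Since analytification is exact and conservative on coherent sheaves (as $\cO_{X^\an}$ is faithfully flat over $\cO_X$ on stalks), we have $(\ker g)^\an = \ker(g^\an) = 0$, whence $\ker g = 0$ and $g$ is a monomorphism. Thus $N$ is a subobject of $M$ with $\omega(N) = W$ inside $M^\an$, verifying the second condition of the criterion. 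Applying \cite[Proposition~2.21(a)]{DM82} (in its evident pro-algebraic form) then shows that $f$ is faithfully flat.
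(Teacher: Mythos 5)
Your proposal is correct and follows essentially the same route as the paper, whose entire proof is the one-line observation that \cite[Proposition~2.21(a)]{DM82} makes the corollary equivalent to Proposition~\ref{prop: gaga for open of codim 2}. Your extra step upgrading the abstract isomorphism $N^\an \simeq W$ to a genuine subobject inclusion via full faithfulness and the exactness/faithfulness of analytification is a valid (and welcome) filling-in of a detail the paper leaves implicit.
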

\begin{proof}
By \cite[Proposition~2.21(a)]{DM82}, this is equivalent to Proposition~\ref{prop: gaga for open of codim 2}.
\end{proof}

\subsection{Derived category of equivariant sheaves} \label{subsection: Derived category of equivariant sheaves}

An important difference between the category of sheaves equivariant under the action of a (Harish-Chandra) Lie algebroid and the category of sheaves equivariant under the action of a Lie group (considered, in particular, in \cite{Bez00}), is that in the case of Lie algebroid the inclusion of derived categories $D^b \Coh^{(G, \cL)} (X) \subset D^b_{\mathrm{coh}} \qcoh^{(G, \cL)} (X)$ (the latter being the category of complexes of quasi-coherent sheaves with $\cO$-coherent cohomology) does not induce an equivalence. Even in the case when $X$ is a point, $G = \id$, and $\cL$ is a semi-simple Lie algebra, that is known not to be the case:
\begin{example} \label{example: g-modules on point}
For a simple Lie algebra $\g$, the category of finite-dimensional modules is semi-simple, and all higher $\Ext$'s vanish. At the same time, in the category of all $\g$-modules, higher $\Ext$'s appear as Lie algebra cohomology groups, and can be nontrivial even for finite-dimensional modules.
This demonstrates that $D^b \Coh^{\g} (\pt) \neq D^b_{\mathrm{coh}} \qcoh^{\g} (\pt)$.
\end{example}

Since the aim of this paper is the construction of perverse t-structure, and one of the applications of it is in obtaining the basis of simple objects in $K$-theory, we now show that at the level of Grothendieck groups these two categories are the same:

\begin{lem} \label{K-theory of coh vs qcoh}
    There is a canonical isomorphism $K_0 (\Coh^{(G, \cL)} (X)) \simeq K_0(D^b_{\mathrm{coh}} \qcoh^{(G, \cL)} (X))$.
\end{lem}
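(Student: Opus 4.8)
The plan is to invoke the standard principle that for a triangulated category equipped with a bounded t-structure, the inclusion of the heart induces an isomorphism on Grothendieck groups. I apply this to $\cD := D^b_{\coh}\qcoh^{(G, \cL)}(X)$, whose heart will turn out to be $\Coh^{(G, \cL)}(X)$. Thus, even though the triangulated categories $D^b\Coh^{(G,\cL)}(X)$ and $\cD$ genuinely differ (as recorded in Example~\ref{example: g-modules on point}), the desired identification of $K$-groups is purely formal once the t-structure is in place.

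First I would verify that the standard t-structure on $D^b\qcoh^{(G, \cL)}(X)$ restricts to a bounded t-structure on $\cD$ with heart $\Coh^{(G, \cL)}(X)$. For $C \in \cD$, the truncations $\tau_{\leq n}C$ and $\tau_{\geq n}C$ have cohomology sheaves taken from among the $\cH^i(C)$, which are $\cO$-coherent by definition of $\cD$; hence the truncation functors preserve $\cD$. The t-structure is bounded because objects of $\cD$ have cohomology in finitely many degrees, and its heart is the full subcategory of complexes concentrated in degree $0$ with coherent cohomology there, which is exactly $\Coh^{(G, \cL)}(X)$.

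Next I would write down the two mutually inverse maps. The class map $\iota: K_0(\Coh^{(G, \cL)}(X)) \to K_0(\cD)$ sends $[M]$ to the class of $M$ placed in degree $0$; it is well defined since a short exact sequence in $\Coh^{(G, \cL)}(X)$ yields a distinguished triangle in $\cD$. The Euler characteristic $\chi: K_0(\cD) \to K_0(\Coh^{(G, \cL)}(X))$ sends $[C]$ to $\sum_i (-1)^i [\cH^i(C)]$; it is well defined because any distinguished triangle induces a long exact sequence of (coherent) cohomology sheaves, whose alternating sum of classes vanishes, giving additivity. The composite $\chi \circ \iota$ is the identity, as $\cH^0(M) = M$ and $\cH^i(M) = 0$ for $i \neq 0$. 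For $\iota \circ \chi = \id$, I would induct on the cohomological amplitude of $C$ using the truncation triangle $\tau_{\leq n-1}C \to \tau_{\leq n}C \to \cH^n(C)[-n] \to \tau_{\leq n-1}C[1]$, which in $K_0(\cD)$ yields $[C] = \sum_i (-1)^i[\cH^i(C)]$, i.e. $\iota(\chi([C])) = [C]$.

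The argument is essentially formal, and I do not expect a serious obstacle: the only point requiring genuine attention — precisely where the failure of the equivalence $D^b\Coh^{(G, \cL)}(X) \simeq \cD$ might have caused trouble — is confirming that the standard truncations remain inside $\cD$ and that the heart is $\Coh^{(G, \cL)}(X)$ and nothing larger. Since coherence of cohomology is manifestly preserved under truncation, both checks go through, and the canonicity of the resulting isomorphism follows from the canonicity of the Euler characteristic.
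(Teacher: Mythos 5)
Your proof is correct and matches the paper's argument: the paper's proof consists exactly of exhibiting the two mutually inverse maps $[\cF] \mapsto [\cF[0]]$ and $[\cG] \mapsto \sum_i (-1)^i [H^i(\cG)]$, which is the Euler-characteristic isomorphism you construct. You merely spell out the routine verifications (truncations preserving coherence of cohomology, induction on amplitude) that the paper leaves implicit.
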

\begin{proof}
One easily sees that the morphisms
\begin{align*}
K_0 (\Coh^{(G, \cL)} (X)) &\longleftrightarrow K_0(D^b_{\mathrm{coh}} \qcoh^{(G, \cL)} (X)) \\
[\cF] &\longmapsto [\cF[0]] \\
\sum_{i \in \bZ} (-1)^i [ H^i(\cG)] &\longmapsfrom [\cG]
\end{align*}
are well-defined and inverse to each other.
\end{proof}

We denote the $K$-group from Lemma~\ref{K-theory of coh vs qcoh} by $K^{(G, \cL)}(X)$.

For technical reasons, we work with the category $D^b_{\mathrm{coh}} \qcoh^{(G, \cL)} (X)$ rather than $D^b \Coh^{(G, \cL)} (X)$. 
We denote $D^{b,  (G, \cL)}_{\mathrm{coh} } (X) := D^b_{\mathrm{coh}} \qcoh^{(G, \cL)} (X)$, $D^{b}_{\mathrm{coh} } (X) := D^b_{\mathrm{coh}} \qcoh (X)$ and similarly for other categories.

As explained in \cite[Section~3]{Kae98}, there are enough injectives in $\qcoh^{(G, \cL)}(X)$, and the functors of inverse and direct image are defined at the level of derived categories (formally speaking, only the non-equivariant case is considered in \emph {loc. cit.}, but adding group-equivariance is standard --- as in the case of D-modules).

Let $i_Z: Z \rightarrow X$ be a locally closed embedding of a $\cL$-invariant subscheme. We have natural functors of direct and inverse images, described in Lemma~\ref{direct and inverse images for locally closed embedding}.
Clearly, they preserve coherence if and only if their derived functors preserve the coherence of cohomology. 

From now on, we use these notation for the derived functors
\[
i_Z^*, i_Z^!: D^{b, {(G, \cL)}}_{\mathrm{coh}} (X) \rightarrow D^{b,{(G, \cL)}}_{\mathrm{coh}} (Z),
\]
and, if $i_Z$ is a closed embedding,
\[
i_{Z*}: D^{b, {(G, \cL)}}_{\mathrm{coh}} (Z) \rightarrow D^{b, {(G, \cL)}}_{\mathrm{coh}} (X),
\]
see the discussion in Section~\ref{subsec: direct and inverse image}.



\subsection{Equivariant dualizing object} \label{subsection equivariant dualizing object}

We call an object $\om_X \in D^{b, (G, \cL)}_{\mathrm{coh}}(X)$ an {\bf equivariant dualizing object} if for any $\cF \in D^{b, (G, \cL)}_{\mathrm{coh}}(X)$ the natural homomorphism \[\cF \rightarrow R \shhom(R \shhom(\cF, \om_X), \om_X)\] is an isomorphism.

Let $\Forg: D^{b, (G, \cL)}_{\mathrm{coh}}(X) \rightarrow D^b_{\mathrm{coh}}(X)$ be the functor that forgets the $(G, \cL)$-action. 
\begin{lem} \label{dualizing object iff Forg is dualizing}
$\om_X \in D^{b, (G, \cL)}_{\mathrm{coh}}(X)$ is an equivariant dualizing object if and only if $\Forg \om_X \in D^{b}_{\mathrm{coh}}(X)$ is a dualizing object.
\end{lem}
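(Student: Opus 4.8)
The plan is to reduce the statement to three formal properties of the forgetful functor $\Forg\colon D^{b,(G,\cL)}_{\mathrm{coh}}(X)\to D^b_{\mathrm{coh}}(X)$. First, $\Forg$ commutes with the derived internal Hom: there is a natural isomorphism $\Forg\, R\shhom(\cF,\cG)\simeq R\shhom(\Forg\cF,\Forg\cG)$, because the right-hand side is computed by a resolution of the arguments that is compatible with forgetting the equivariant structure. Second, $\Forg$ is exact and conservative, hence reflects isomorphisms (an equivariant complex is acyclic if and only if its underlying complex is acyclic). Third, under the identification of the first point, $\Forg$ carries the equivariant biduality morphism $\eta_\cF\colon\cF\to R\shhom(R\shhom(\cF,\om_X),\om_X)$ to the non-equivariant biduality morphism $\eta_{\Forg\cF}$, since both are assembled from the evaluation maps of $R\shhom$, which are natural with respect to $\Forg$. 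I also record that $\Forg\cO_X=\cO_X$.

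Granting these, the implication ``$\Forg\om_X$ is dualizing $\Rightarrow$ $\om_X$ is equivariant dualizing'' is immediate: for any $\cF\in D^{b,(G,\cL)}_{\mathrm{coh}}(X)$, applying $\Forg$ to $\eta_\cF$ produces $\eta_{\Forg\cF}$, which is an isomorphism by hypothesis, and conservativity then forces $\eta_\cF$ to be an isomorphism as well.

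The converse is the substantial direction, and I would route it through the classical characterization of dualizing complexes on a Noetherian scheme: a complex $\om\in D^b_{\mathrm{coh}}(X)$ is dualizing if and only if (a) the natural map $\cO_X\to R\shhom(\om,\om)$ is an isomorphism and (b) $\om$ has finite injective dimension. Condition (a) for $\Forg\om_X$ is cheap: apply $\Forg$ to $\eta_{\cO_X}$, which is an equivariant isomorphism because $\cO_X$ is a $(G,\cL)$-module, and use the first and third ingredients together with $R\shhom(\cO_X,\om_X)\simeq\om_X$. The real obstacle is condition (b). Equivariant biduality only asserts that $R\shhom(-,\om_X)$ is an anti-autoequivalence of the \emph{equivariant} category, so that $R\shhom(\Forg\cF,\Forg\om_X)$ is bounded for $\cF$ equivariant; but finite injective dimension demands uniform boundedness of $R\shhom(\cG,\Forg\om_X)$ for \emph{all} coherent $\cG$, and these need not underlie equivariant sheaves (already for $X=\torus$ with $\cL=\cT_X$, equivariant coherent sheaves are local systems and miss every torsion sheaf).

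To clear this obstacle I would pass to the pointwise (Bass) criterion: $\Forg\om_X$ has finite injective dimension exactly when the numbers $\dim_{k(x)}\mathcal{E}xt^{\,i}_{\cO_{X,x}}\!\big(k(x),(\Forg\om_X)_x\big)$ vanish for $i$ beyond a bound independent of $x$. The equivariant structure renders these Bass numbers constant along each $\cL$-orbit $S$, so it suffices to control them at one point of each orbit; this is done by relating them, via local duality along the smooth orbit $S$ (whose coherent $\cL$-modules are locally free by Lemma~\ref{coherent module is locally free}), to $R\shhom(\cO_{\overline{S}},\om_X)$, which is bounded because it is $\Forg$ of the equivariant dual of the equivariant sheaf $\cO_{\overline S}$. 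A uniform bound then follows whenever the orbits are controlled, as in all the situations of interest. Once (b) holds, $R\shhom(-,\Forg\om_X)$ preserves $D^b_{\mathrm{coh}}(X)$, the biduality transformation becomes a morphism of triangulated endofunctors, and the objects on which it is an isomorphism form a thick subcategory; by the first ingredient this subcategory contains $\im(\Forg)$, and a dévissage along the orbit stratification propagates biduality to all of $D^b_{\mathrm{coh}}(X)$, completing the converse.
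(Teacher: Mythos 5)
Your first direction --- $\Forg\,\om_X$ dualizing $\Rightarrow$ $\om_X$ equivariant dualizing, via compatibility of $\Forg$ with $R\shhom$ and with the biduality morphism, plus conservativity --- is correct, and it is in substance the formal half of the argument the paper invokes (the paper's entire proof is the citation ``in line with \cite[Lemma~4]{Bez00}''). It is also the only direction the paper ever uses downstream: on a Gorenstein $X$ one knows $\cO_X[\tfrac{1}{2}\dim X]$ is dualizing non-equivariantly and upgrades it. One caveat even here: the isomorphism $\Forg\, R\shhom(\cF,\cG)\simeq R\shhom(\Forg\cF,\Forg\cG)$ is not free of charge, because the injectives of $\qcoh^{(G,\cL)}(X)$ (which exist by \cite{Kae98}) are not obviously injective, or even $\shhom(\cF,-)$-acyclic, as $\cO_X$-modules; you should either compute $R\shhom$ via resolutions of the first argument or argue acyclicity explicitly, rather than appeal to ``a resolution compatible with forgetting.''

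The converse direction is where your proposal has a genuine gap. The reduction to (a) $\cO_X\to R\shhom(\om_X,\om_X)$ and (b) finite injective dimension is the right classical criterion, and your (a) is fine; but your proof of (b) uses hypotheses the lemma does not have. The lemma sits in Section~\ref{subsection equivariant dualizing object}, where $X$ is an arbitrary connected finite-type scheme and nothing is assumed about orbits --- the paper explicitly notes orbits need not exist --- whereas your argument requires a finite orbit stratification and concedes as much (``whenever the orbits are controlled''). Moreover, the key claim that Bass numbers are constant along an $\cL$-orbit is asserted, not proved: unlike a group action, an algebroid provides no automorphisms of $X$ translating one point of $S$ to another, so constancy needs a formal-local rigidity argument (in the spirit of \cite[Theorem~A.7.3]{Kap07}), and the step relating Bass numbers at $x$ to $R\shhom(\cO_{\ol S},\om_X)$ by ``local duality along $S$'' is only a sketch. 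Finally, your closing d\'evissage is both unjustified and unnecessary: nothing guarantees that $\im \Forg$ generates $D^b_{\coh}(X)$ as a thick subcategory (on the singular $X$ of interest $\Forg$ is very far from essentially surjective), but once (a), (b) and coherence of cohomology hold, $\om_X$ is dualizing by the standard characterization and biduality on all of $D^b_{\coh}(X)$ is automatic, so that step should simply be deleted. For comparison, in the group case the hard direction can be run by smooth descent along $X\to X/G$ or by averaging; both tools are unavailable for Lie algebroids (the paper stresses that quasi-coherent equivariant sheaves are no longer unions of coherent ones), which is exactly why your step (b) is the real content of this direction --- and it remains unproven in the stated generality. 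As written, your proposal establishes the implication the paper actually uses, but not the ``only if'' half of the lemma.
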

\begin{proof}
In line with \cite[Lemma 4]{Bez00}, which is the same result for the group-equivariant category.
\end{proof}

We expect $\cO$-coherent dualizing object, equivariant under a HC Lie algebroid, to exist in a large class of situations. 
However, the schemes of interest in the present paper are Gorenstein (see Section~\ref{Section: symplectic singularities}), hence the existence of an equivariant dualizing object will be evident from Lemma~\ref{dualizing object iff Forg is dualizing}. That is why we do not pursue the general existence question further, and throughout Section \ref{sec: perverse coherent modules over HC Lie algebroids}, we simply assume that there exists an equivariant dualizing object $\om_X \in D^{b, (G, \cL)}_{\mathrm{coh}}(X)$. 
We denote the duality functor by $\bD(-) = R\shhom(-, \om_X)$.

\section{Perverse coherent modules over Harish-Chandra Lie algebroids} \label{sec: perverse coherent modules over HC Lie algebroids}

Recall the construction of the perverse coherent t-structure of \cite{Bez00, AB10} (note that similar constructions appear in \cite{Kas03}, \cite{Gab04}).
The goal of this section is to repeat it for the case of modules over a HC lie algebroid.
The construction is almost verbatim the same as in \cite{Bez00, AB10}, and we need to transfer all the lemmata from these papers to our setting. Often, the proofs do not depend on the equivariance condition. 
However, they are still formally new in this setting, so we include their full statements. For the proofs, we refer to \cite{Bez00, AB10} whenever possible, but provide full proofs in cases where the arguments differ in our setting. Note that the original setting of \cite{Bez00} of group-equivariant sheaves is closer to ours, but the proofs in \cite{AB10} (in the setting of algebraic stacks) are very similar (and usually are better-written, which is why we prefer to refer to \cite{AB10}).

\subsection{Preparatory lemmata}

In this subsection, we repeat all the facts from \cite{Bez00, AB10} needed for the construction of the perverse t-structure, in our setting of sheaves equivariant under a HC Lie algebroid.

Let $(G, \cL)$ be a Harish-Chandra Lie algebroid on $X$. Recall that $D^{b, (G, \cL)}_\coh(X)$ is the triangulated category of $(G, \cL)$-modules with $\cO$-coherent cohomology.



Consider the topological space $(X / \cL)^{\textrm{top}}$, whose points are the generic points of $\cL$-invariant closed reduced subschemes of $X$, equipped with the natural (Zariski) topology. For $x \in (X / \cL)^{\textrm{top}}$ we denote by $\dim x$ the dimension of the corresponding subscheme of $X$.

\begin{lem}[\cite{AB10}, Lemma 2.13] \label{sheaf with support on closed is pushforward from subscheme}
Given a closed $\bold Z \subset (X / \cL)^{\mathrm{top}}$ and $\cF \in D_{\mathrm{coh}}^{b, (G, \cL)} (X)$ such that $\supp \cF \subset \bold Z$, there exists a closed $(G, \cL)$-invariant subscheme $i_Z: Z \hookrightarrow X$ with $Z^{\mathrm{top}} \subset \bold Z$ and $\cF_Z \in D_{\mathrm{coh}}^{b, (G, \cL)} (Z)$ such that $\cF = i_{Z *} \cF_Z$.
\end{lem}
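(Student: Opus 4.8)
The plan is to adapt the argument of \cite[Lemma 2.13]{AB10} to the $(G, \cL)$-equivariant setting, the only genuinely new point being that every scheme structure I use can be chosen $(G, \cL)$-invariant. First I would convert the topological datum $\bold Z$ into geometry: the closed set $\bold Z \subset (X/\cL)^{\mathrm{top}}$ is the set of generic points of an honest $(G, \cL)$-invariant reduced closed subscheme $Y \subset X$; let $\cI \subset \cO_X$ be its ideal. Since $Y$ is $\cL$-invariant, $\rho(\cL)$ consists of vector fields tangent to $Y$, so $\rho(\ell)(\cI) \subset \cI$ for all local sections $\ell$, i.e. $\cI$ is $\cL$-stable, and by the Leibniz rule so is every power $\cI^n$. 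As $\cI$ is also $G$-stable, the thickenings $Z_n := V(\cI^n)$ are $(G, \cL)$-invariant closed subschemes, all with $(Z_n)^{\mathrm{top}} = Y^{\mathrm{top}} \subset \bold Z$. Hence it suffices to prove: any $\cF \in D^{b, (G, \cL)}_{\coh}(X)$ whose cohomology sheaves are set-theoretically supported on $Y$ satisfies $\cF \simeq i_{Z_n *}\cF_Z$ for some $n$ and some $\cF_Z \in D^{b, (G, \cL)}_{\coh}(Z_n)$.

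I would then argue by induction on the number of nonzero cohomology sheaves of $\cF$, using the truncation triangles of the standard t-structure on $D^{b, (G, \cL)}_{\coh}(X)$, which preserves equivariance. The base case $\cF = M[-d]$, with $M$ a single coherent $(G, \cL)$-module supported on $Y$, is easy: by Noetherianity $\cI^n M = 0$ for some $n$, so $M$ is an $\cO_{Z_n}$-module, and since $\cI^n$ is $(G, \cL)$-stable the action descends, giving $M \simeq i_{Z_n *} M$ by Lemma~\ref{direct and inverse images for locally closed embedding}\ref{direct and inverse c)}. For the inductive step, letting $b$ be the top nonzero degree, the triangle
\[
\tau_{<b}\cF \longrightarrow \cF \longrightarrow H^b(\cF)[-b] \xrightarrow{\ \delta\ } (\tau_{<b}\cF)[1]
\]
has both outer terms, by induction and the base case, of the form $i_{Z_N *}\cG$ and $i_{Z_N *}\cH$ for a common $N$ (take $N$ the larger of the two indices; an object pushed forward from $Z_a$ is also pushed forward from any $Z_N \supseteq Z_a$, as $i_{Z_a*}$ factors through $i_{Z_N*}$). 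It then remains to realize the connecting morphism $\delta$ over a thickening: if $\delta$ lies in the image of $\Hom_{Z_{m}}(\cH, \cG[1]) \to \Hom_X(i_{Z_N *}\cH, i_{Z_N*}\cG[1])$ for some $m \ge N$, then the cone of a lift, formed inside $D^{b, (G, \cL)}_{\coh}(Z_m)$, pushes forward to $\cF$, because $i_{Z_m*}$ is exact and hence triangulated.

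The crux, exactly as in \cite{AB10}, is this last lifting, i.e. the surjectivity of
\[
\varinjlim_{m} \Hom_{D^{b, (G, \cL)}_{\coh}(Z_m)}(\cH, \cG[1]) \longrightarrow \Hom_{D^{b, (G, \cL)}_{\coh}(X)}(i_{Z_N*}\cH, i_{Z_N*}\cG[1]),
\]
the colimit running over the thickenings $Z_m \supseteq Z_N$. This is the equivariant avatar of the classical approximation of complexes by infinitesimal neighborhoods (a formal-functions statement): the relevant equivariant $\mathcal{E}\mathit{xt}$-sheaves are coherent and supported on $Y$, hence annihilated by a power of $\cI$, and feeding this into the hyper-$\Ext$ spectral sequence shows that every class is defined over a sufficiently high thickening. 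I expect this to be the only step requiring real work, and I would either transcribe the proof of \cite[Lemma 2.13]{AB10} line by line, checking that the resolutions, cones, and $\mathcal{E}\mathit{xt}$-estimates can all be taken $(G, \cL)$-equivariantly, or reduce the underlying sheaf-level estimate to the classical case via the faithful exact functor $\Forg$, which detects both set-theoretic support and annihilation by $\cI^n$. Granting the lifting, the induction closes and produces the required $(G, \cL)$-invariant $Z = Z_m$ together with $\cF_Z$, with $Z^{\mathrm{top}} = Y^{\mathrm{top}} \subset \bold Z$.
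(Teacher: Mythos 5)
Your skeleton coincides with the paper's proof: invariant thickenings $Z_n$ of a reduced invariant $Y$, induction on the number of nonzero cohomology sheaves, the base case via Noetherianity ($\cI^n M = 0$), and reduction of the inductive step to lifting the connecting Ext-class over a sufficiently high thickening. You have also correctly isolated that lifting as the only nontrivial point. The gap is in how you propose to close it. Your option of transcribing \cite[Lemma 2.13]{AB10} line by line, ``checking that the resolutions, cones, and $\mathcal{E}\mathit{xt}$-estimates can all be taken $(G,\cL)$-equivariantly,'' is exactly what the paper warns cannot be done: in the algebroid-equivariant setting the inclusion $D^b \Coh^{(G,\cL)}(X) \subset D^{b,(G,\cL)}_{\coh}(X)$ is \emph{not} an equivalence (Example~\ref{example: g-modules on point}), so an object of the ambient category need not be realizable as a complex of coherent equivariant sheaves, and the coherent-resolution manipulations underlying the argument of \cite{AB10} are unavailable --- this is precisely why the paper supplies a new argument for this lemma. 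Your phrase ``the relevant equivariant $\mathcal{E}\mathit{xt}$-sheaves are coherent'' is also not right as stated: sheaf-Ext computed in the equivariant category involves Lie algebroid (e.g.\ Lie algebra) cohomology and is not controlled by coherence; what is coherent, supported on $Y$, and hence killed by a power of $\cI$ are the \emph{non-equivariant} inner Ext-sheaves $\mathcal{E}\mathit{xt}^q_{\cO_X}(\cF_1,\cF_2)$.

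The viable completion --- and the paper's actual proof --- is a sharpened form of your third option. One factors the equivariant Hom as $R\Hom_{(G,\cL)}(\cF_1,\cF_2) \simeq R\Gamma \circ R(-)^{(G,\cL)} \circ R\shhom_{\cO_X}(\cF_1,\cF_2)$, where $R(-)^{(G,\cL)} = R\shhom_{(G,\cL)}(\underline{\bC},-)$ is the derived invariants functor; applies the non-equivariant colimit statement \cite[Lemma 2.3]{AB10} to the inner $R\shhom_{\cO_X}$, i.e.\ $R\shhom_{\cO_X}(\cF_1,\cF_2) \simeq \varinjlim_n R\shhom_{\cO_{Z_n}}(-,-)$; and then observes that both $R\Gamma$ and $R(-)^{(G,\cL)}$ commute with filtered colimits. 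This gives $R\Hom_{X,(G,\cL)}(\cF_1,\cF_2) \simeq \varinjlim_n R\Hom_{Z_n,(G,\cL)}(-,-)$, which is exactly the surjectivity you need. Note that $\Forg$ alone cannot substitute for this: it detects support and $\cI^n$-torsion, but the class $\delta$ you must lift is an equivariant Ext-class, and its non-equivariant image carries no information about equivariant liftability; the substantive point missing from your sketch is the commutation of the two extra layers ($R\Gamma$ and derived $(G,\cL)$-invariants) with the filtered colimit over thickenings. With that supplied, your induction closes exactly as in the paper.
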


Note that the proof in \cite{AB10} uses the realization of an object in this triangulated as a complex of coherent sheaves, which cannot be done w.r.t. Lie algebroid action, as noted above. We provide an argument that reduces the algebroid-equivariant case to the non-equivariant case.

\begin{proof}

The proof proceeds by induction on the number of nonzero cohomology groups of $\cF$.

If this number equals 1, then our claim becomes the corresponding claim for the abelian category of coherent $(G, \cL)$-modules, which is clear.

For the induction step, we note that there is a distinguished triangle in $D_{\mathrm{coh}}^{b, (G, \cL)} (X)$
\begin{equation*}
\cF_2 \rightarrow \cF \rightarrow \cF_1 \rightarrow \cF_2[1]
\end{equation*}
such that $\cF_1, \cF_2$ have fewer nonzero cohomology groups than $\cF$, and all these cohomology sheaves are supported on $\bold Z$. Hence by the induction assumption, $\cF_1, \cF_2$ can be realized as direct images from some closed $(G, \cL)$-invariant subscheme $Z$. $\cF$ represents some class in the $\Ext$-group between these sheaves. Thus, it is sufficient to prove that $\Rhom$ between sheaves supported on $\bold Z$, in the category of $(G, \cL)$-modules on $X$, coincides with limit of $\Rhom$'s in the category of $(G, \cL)$-modules on thickenings of $Z$.

More formally, denote by $R\Hom_{(G, \cL)}$ morphisms in the category $D_{\mathrm{coh}}^{b, (G, \cL)} (X)$. Then we have
\begin{equation} \label{eq:hom is composition of inner hom, invariants and sections}
R \Hom_{(G, \cL)}(\cF_1, \cF_2) \simeq R\Gamma \circ R(-)^{(G, \cL)} \circ R \shhom_{\cO_X} (\cF_1, \cF_2),
\end{equation}
where $R(-)^{(G, \cL)} = R \shhom_{(G, \cL)}(\underline \bC, -)$ is the derived functor of $(G, \cL)$-invariants. 

Denote by $Z_n$ the $n$-th formal neighborhood of $Z$ in $X$, and $i_n: Z \hookrightarrow Z_n$ the inclusion. Abusing notation, we view $\cF_i$ $(i = 1, 2)$ as sheaves on $X$ or on $Z$, depending on the context. The statement of the Lemma in the non-equivariant case (\cite[Lemma 2.3]{AB10}) in particular tells us that
\begin{equation} \label{eq:hom is colimit over thickenings}
R \shhom_{\cO_X} (\cF_1, \cF_2) = \varinjlim_n R\shhom_{\cO_{Z_n}}(i_{n *}  \cF_1, i_{n *} \cF_2).
\end{equation}
Note that in our case $R\Gamma$ commutes with filtered colimit \cite[\href{https://stacks.math.columbia.edu/tag/0738}{Tag 0738}]{SP};
$R(-)^{(G, \cL)} = R \shhom_{(G, \cL)}(\underline \bC, -)$ also commutes with filtered colimits. Hence, \eqref{eq:hom is composition of inner hom, invariants and sections} and \eqref{eq:hom is colimit over thickenings} yield
\begin{equation} \label{eq:distinguished triangle for F}
R\Hom_{X, (G, \cL)}(\cF_1, \cF_2) \simeq \varinjlim_n R\Hom_{Z_n, (G, \cL)} (i_{n *} \cF_1, i_{n *} \cF_2).
\end{equation}

This implies that the class of $\cF$ in \eqref{eq:distinguished triangle for F} comes from some $Z_n$, as required.
\end{proof}

For $x \in (X / \cL)^{\textrm{top}}$, denote $\bold i_x: \{ x \} \rightarrow (X / \cL)^{\textrm{top}}$. We have derived functors:
\begin{align*}
\bold i_x^*: D^{b, {(G, \cL)}}_{\mathrm{coh} }  X &\rightarrow D^b_{\mathrm{coh}} (\cO_x \text {--mod}), \\
\bold i_x^!: D^{b, {(G, \cL)}}_{\mathrm{coh}}  X &\rightarrow D^b_{\mathrm{coh}} (\cO_x \text {--mod}).
\end{align*}
Note that $\bold i_x^!$ has finite cohomological dimension by \cite[Lemma 2.19]{AB10}.

The following lemmata of \cite{Bez00, AB10} are independent of the equivariance condition, hence applicable to our setting.

\begin{lem}[\cite{AB10}, Lemma 2.21]
Let $Z\subset X$ be a locally closed subscheme, and $n$ be an integer. Let $x \in X^\tp$ be a generic point of $Z$. Then:
\begin{enumerate}[(a)]
\item\label{equiv:a}  For $\cF \in D^b_{\mathrm{coh}}(X)$ we have $\bi_x ^*(\cF)\in D^{\leq n}(\cO _x\md)$ if and only if
there  exists an open subscheme $Z_0\subset Z$, $Z_0 \owns x$
 such that $i_{Z_0}^*(\cF)\in D^{\leq n}_{\mathrm{coh}} (Z_0) $;

\item\label{equiv:b}  For $\cF \in D^b_{coh}(X)$ we have $\bi_x ^!(\cF)\in D^{\geq n}(\cO_x\md)$ if and only if there exists an open  subscheme $Z_0\subset Z$, $Z_0\owns x$
such that $i_{Z_0}^!(\cF )\in D^{\geq n}_{coh} \qcoh Z_0$.
\end{enumerate}
\end{lem}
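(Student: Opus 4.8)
The plan is to reduce the assertion to its non-equivariant counterpart \cite[Lemma 2.21]{AB10} by means of the forgetful functor $\Forg \colon D^{b, (G, \cL)}_{\mathrm{coh}}(X) \to D^b_{\mathrm{coh}}(X)$. The four conditions occurring in the statement --- membership of $\bi_x^*(\cF)$ (resp. $\bi_x^!(\cF)$) in $D^{\leq n}(\cO_x\md)$ (resp. $D^{\geq n}(\cO_x\md)$), and membership of $i_{Z_0}^*(\cF)$ (resp. $i_{Z_0}^!(\cF)$) in $D^{\leq n}$ (resp. $D^{\geq n}$) --- are all statements about vanishing of cohomology modules, hence are detected on the underlying complexes of $\cO$-modules. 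Moreover, by Lemma~\ref{lem: pushforward for open embedding} and Lemma~\ref{direct and inverse images for locally closed embedding}, the algebroid inverse and direct images along the open embedding $Z_0 \hookrightarrow Z$ and the locally closed embedding $Z \hookrightarrow X$ agree, after applying $\Forg$, with the ordinary $\cO$-module functors; the localization functors $\bi_x^*, \bi_x^!$ likewise commute with $\Forg$, their targets carrying no equivariant structure. So first I would record these compatibilities, after which the equivariant statement for $\cF$ becomes literally the non-equivariant statement for $\Forg\,\cF$, namely \cite[Lemma 2.21]{AB10}.

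For the reader's orientation I would then indicate the content of the cited result in case (a). Since localization at the generic point $x$ of $Z$ is flat, one has $H^k(\bi_x^*\cF) \simeq (\cH^k(\cF))_x$, so $\bi_x^*(\cF) \in D^{\leq n}$ precisely when the coherent sheaves $\cH^k(\cF)$ have vanishing stalk at $x$ for all $k > n$. For the ``only if'' direction, each such $\cH^k(\cF)$ has closed support avoiding $x$, hence vanishes on an open neighborhood $U_k \ni x$ in $X$; as $\cF$ is bounded only finitely many $k$ occur, and $Z_0 := Z \cap \bigcap_{k > n} U_k$ is an open subscheme of $Z$ containing $x$ on which $\cH^k(\cF)$ restricts to zero for $k > n$. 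The hyper-Tor spectral sequence $L_{-p}\, i_{Z_0}^*(\cH^q(\cF)) \Rightarrow H^{p+q}(i_{Z_0}^*\cF)$ (with $p \leq 0$) then forces $i_{Z_0}^*(\cF) \in D^{\leq n}_{\mathrm{coh}}(Z_0)$: any term contributing to a degree $k > n$ has $q \geq k > n$, so $\cH^q(\cF)$ restricts to zero on $Z_0$ and the term vanishes. The ``if'' direction follows by descending induction on cohomological degree, transferring the vanishing of $H^k(i_{Z_0}^*\cF)$ to that of $(\cH^k(\cF))_x$ via Nakayama's lemma at the local ring $\cO_x$.

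Case (b) is dual, using $\bi_x^!$ and $i_{Z_0}^!$ in place of the $*$-functors and the lower bound $D^{\geq n}$; here one invokes that $\bi_x^!$ has finite cohomological dimension \cite[Lemma~2.19]{AB10} together with left-exactness of the $!$-restriction, spreading the lower bound at the generic point out to an open neighborhood. I expect no genuine obstacle in the equivariant passage itself: the only point requiring care is the compatibility of every functor in sight with $\Forg$, and once that is in place the substantive geometric input --- the spreading-out from the generic point and the control of the derived corrections of restriction --- is entirely contained in the non-equivariant \cite[Lemma~2.21]{AB10} and needs no modification.
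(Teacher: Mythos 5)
Your proposal is correct and matches the paper's treatment: the paper disposes of this lemma with the single observation that it is ``independent of the equivariance condition'' and cites \cite[Lemma~2.21]{AB10} verbatim (indeed the statement is already phrased for $\cF \in D^b_{\mathrm{coh}}(X)$, so even the forgetful-functor bookkeeping you set up is not strictly needed). Your additional sketch of the internals of the cited non-equivariant proof --- exactness of localization at $x$, spreading out the vanishing of $\cH^k(\cF)$ to an open neighborhood, the spectral sequence for $i_{Z_0}^*$, Nakayama at the Artinian local ring $\cO_{Z,x}$, and finite cohomological dimension of $\bi_x^!$ for part (b) --- is accurate but goes beyond what the paper itself records.
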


\begin{lem}[\cite{AB10}, Lemma 2.22]
Let $\bi: \bold Z \hookrightarrow X^\tp$ be the embedding of a closed
subspace. 
For any $\cF \in D^-_{\mathrm{coh}}(X)$, $\cG \in D^+_{\textrm{qcoh}}(X)$ we have
$$
\Hom(\cF ,\bi_*\bi^!(\cG))=\varinjlim_Z \Hom(\cF ,i_{Z*}i_Z^!(\cG)),
$$                                 
where $Z$ runs over the set of closed subschemes of $X$ with underlying
topological space $\bold Z$.

\end{lem}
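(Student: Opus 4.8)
The plan is to reduce the statement to a filtered-colimit commutation at the level of underlying $\cO_X$-modules, exactly in the spirit of the proof of Lemma~\ref{sheaf with support on closed is pushforward from subscheme}, where the equivariance was shown to be harmless. First I would recall that, by the construction of the topological functors on $(X/\cL)^\tp$ — equivalently, by the standard identification of $\bi_*\bi^!$ with the derived functor of sections supported on $\bold Z$ — the object $\bi_*\bi^!(\cG)$ is the local cohomology $R\Gamma_{\bold Z}(\cG)$, and this is computed as the filtered colimit $\varinjlim_Z i_{Z*} i_Z^!(\cG)$ over closed subschemes $Z$ with $Z^\tp = \bold Z$. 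Here I would invoke Lemma~\ref{sheaf with support on closed is pushforward from subscheme} to see that the $(G,\cL)$-invariant thickenings are cofinal among all such $Z$, so that the colimit may be taken over invariant subschemes, each functor $i_Z^!$ is genuinely $(G,\cL)$-equivariant, and both $\bi_*\bi^!(\cG)$ and the right-hand side are meaningful. With this identification the lemma becomes the single assertion that $\Hom(\cF,-)$ commutes with the filtered colimit $\varinjlim_Z i_{Z*} i_Z^!(\cG)$.

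The heart of the matter is the corresponding non-equivariant commutation, namely that $R\shhom_{\cO_X}(\cF,-)$ commutes with this colimit. For this I would argue by dévissage on $\cF \in D^-_\coh(X)$. The crucial finiteness inputs are that $\bi_x^!$, and hence each $i_Z^!$, has finite cohomological dimension (\cite[Lemma 2.19]{AB10}), and that $\cF$ is bounded above while $\cG$ is bounded below; combining these, only finitely many cohomology sheaves of $\cF$ contribute in any fixed cohomological degree of the $\Hom$, so that after truncation one may replace $\cF$ by a single coherent sheaf. A coherent $\cO_X$-module is finitely presented, and $\shhom_{\cO_X}(\cF,-)$ of a finitely presented module commutes with filtered colimits of quasi-coherent modules; bootstrapping this fact through the bounded dévissage yields the commutation of $R\shhom_{\cO_X}(\cF,-)$ with $\varinjlim_Z$.

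It remains to feed this through the equivariance, which is precisely where the argument is insensitive to the passage from the group-equivariant setting to HC Lie algebroids. I would use the factorization $R\Hom_{(G,\cL)}(\cF,-) \simeq R\Gamma \circ R(-)^{(G,\cL)} \circ R\shhom_{\cO_X}(\cF,-)$ recorded in \eqref{eq:hom is composition of inner hom, invariants and sections}, together with the fact, already established there, that both $R\Gamma$ and $R(-)^{(G,\cL)} = R\shhom_{(G,\cL)}(\underline\bC,-)$ commute with filtered colimits. Since the inner $R\shhom_{\cO_X}(\cF,-)$ commutes with $\varinjlim_Z$ by the previous paragraph, so does the whole composite, and the equivariant $\Hom$ commutes with the colimit as claimed.

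I expect the main obstacle to be the bookkeeping in the dévissage: one must ensure that $\Hom(\cF,-)$ commutes with the colimit in the derived sense, not merely degreewise, so that no $\varprojlim^1$-type or unboundedness phenomena intervene in the associated hypercohomology spectral sequences. This is exactly what the finite cohomological dimension of $i_Z^!$ and the boundedness of $\cF$ and $\cG$ control, confining the spectral sequences to a bounded range and guaranteeing convergence. Beyond this point the proof runs parallel to \cite{AB10}, the equivariance having been entirely isolated into the colimit-preserving functors $R\Gamma$ and $R(-)^{(G,\cL)}$.
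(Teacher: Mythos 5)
Your argument is correct in substance, but it takes a genuinely different --- and much heavier --- route than the paper, which gives no proof at all: the lemma is stated for the plain categories $D^-_{\mathrm{coh}}(X)$ and $D^+_{\mathrm{qcoh}}(X)$ with the ordinary non-equivariant $\Hom$, and the paper imports it verbatim from \cite[Lemma 2.22]{AB10} under the explicit remark that this lemma (like the preceding one) is ``independent of the equivariance condition.'' Your first two paragraphs essentially reconstruct the \cite{AB10} argument --- identify $\bi_*\bi^!(\cG)$ with local cohomology $R\Gamma_{\bold Z}(\cG) \simeq \varinjlim_Z i_{Z*}i_Z^!(\cG)$, then commute $\Hom(\cF,-)$ with the filtered colimit by d\'evissage using finite presentation of coherent sheaves and uniform boundedness --- which is a correct proof but unnecessary given the citation. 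Your third paragraph then proves something strictly stronger than what is stated: an equivariant variant of the lemma, via the factorization \eqref{eq:hom is composition of inner hom, invariants and sections} and the colimit-preservation of $R\Gamma$ and $R(-)^{(G,\cL)}$. This is exactly the technique the paper deploys inside the proof of Lemma~\ref{sheaf with support on closed is pushforward from subscheme}, and your equivariant enhancement would indeed go through, but the t-structure construction only ever needs the non-equivariant statement, since perversity is tested by the non-equivariant functors $\bi_x^*$, $\bi_x^!$. Two small inaccuracies worth fixing: cofinality of $(G,\cL)$-invariant thickenings is not the content of Lemma~\ref{sheaf with support on closed is pushforward from subscheme} --- it is the separate, elementary observation that infinitesimal neighborhoods of an invariant closed subscheme are again invariant, because a derivation preserving an ideal preserves its powers; and the finiteness input that lets you truncate $\cF$ in a fixed degree of the $\Hom$ is the uniform lower bound $i_{Z*}i_Z^!\cG = i_{Z*}R\shhom_{\cO_X}(\cO_Z,\cG) \in D^{\geq n_0}$ coming from $\cG \in D^+$, rather than the finite cohomological dimension of $\bi_x^!$ from \cite[Lemma 2.19]{AB10}, which controls the opposite bound.
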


\subsection{Perverse coherent t-structure}
The lemmata in the previous subsection are the only ingredients needed in the proofs in \cite[Section~3.1]{AB10} of the fact that the perverse coherent t-structure is indeed a t-structure. We adapt the arguments for our setting in this subsection.

From now on, we assume that there exists an equivariant dualizing object in $D^{b, (G, \cL)}_\coh(X)$.

Let $p: (X / \cL)^\tp \rightarrow \bZ$ be a perversity (an integer-valued function on $(X / \cL)^\tp$). We define the dual perversity as $\ol p(x) = -\dim x - p(x)$.

We define the full subcategories $D^{p, \leq 0} \subset D_{\mathrm{coh}}^{b, (G, \cL)}(X)$, $D^{p, \geq 0} \subset D_{\mathrm{coh}}^{b, (G, \cL)}(X)$ by:
\begin{equation} \label{definition of D^+ and D^-}
\begin{aligned}
     \cF \in D^{p, \geq 0} &\text{ if for any }x \in (X / \cL)^{\textrm{top}} \text{ we have } \bold i_x^! \cF \in D^{\geq p(x)}(\cO_x \text {--mod}). \\
     \cF \in D^{p, \leq 0} &\text{ if for any } x \in (X / \cL)^{\textrm{top}} \text{ we have } \bold i_x^* \cF \in D^{\leq p(x)}(\cO_x \text {--mod}).
\end{aligned}
\end{equation}

Now, the proof that this defines the t-structure is completely analogous to \cite{AB10}. We state all the steps below.

\begin{lem} \label{basic properties of perversity}
One has:
\begin{enumerate} [(a)]
\item\label{basic:duality} $\bD(D^{p, \leq 0}(X))=D^{\ol p,\geq 0}(X)$.

\item\label{} Let  $i_Z:Z \hookrightarrow X$ be a locally closed 
subscheme. Define the \emph{induced perversity} on $Z$ by $p_Z=p\circ i_Z: Z^\tp\to \bZ$.
Then 
$$i_Z^*(D^{p, \leq 0}(X)) \subset D^{p_Z,\leq 0}(Z)\qquad\text{and}\qquad i_Z^!(D^{p, \geq 0}(X)) \subset D^{p_Z, \geq 0}(Z).$$

\item\label{predvar:c} In the situation of (b), assume that $Z$ is closed. Then 
$$i_{Z*}(D^{p_Z,\leq 0}(Z))\subset D^{p, \leq 0}(X) \qquad\text{and} \qquad i_{Z*}(D^{p_Z,\geq 0}(Z)) \subset D^{p, \geq 0}(X).$$ 
\end{enumerate}
\end{lem}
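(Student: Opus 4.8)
The plan is to reduce all three assertions to their non-equivariant counterparts in \cite[\S3.1]{AB10}, exploiting that the conditions \eqref{definition of D^+ and D^-} defining the two aisles are imposed only through the pointwise functors $\bi_x^*,\bi_x^!$, whose target $D^b_{\mathrm{coh}}(\cO_x\md)$ remembers nothing of the equivariance. First I would record that $\bi_x^*\simeq\bi_x^*\circ\Forg$ and $\bi_x^!\simeq\bi_x^!\circ\Forg$, so that whether $\cF$ lies in $D^{p,\leq 0}$ or in $D^{p,\geq 0}$ depends only on $\Forg\cF$, where $\Forg\colon D^{b,(G,\cL)}_{\mathrm{coh}}(X)\to D^b_{\mathrm{coh}}(X)$ forgets the algebroid action. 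It then suffices to check that $\Forg$ intertwines each of $i_Z^*, i_Z^!, i_{Z*}$ and $\bD$ with its ordinary analogue: for the first three this is Lemma~\ref{direct and inverse images for locally closed embedding} and the discussion after it (the algebroid images along an invariant subscheme agree with the quasi-coherent ones), while for $\bD=R\shhom(-,\om_X)$ I would use $\Forg\,R\shhom(\cF,\om_X)\simeq R\shhom(\Forg\cF,\Forg\om_X)$ together with Lemma~\ref{dualizing object iff Forg is dualizing}, which makes $\Forg\om_X$ dualizing and hence gives $\Forg\circ\bD\simeq\bD\circ\Forg$. Granting this, (a)--(c) become the non-equivariant computations of \cite[\S3.1]{AB10}, which I would reproduce as follows.

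For (b) I would use that $\bi_x^*$ is exact localization at the generic point, so that $D^{p,\leq 0}$ is really a support condition on the cohomology sheaves. On the $*$-side, for $y\in Z^\tp$ with image $x=i_Z(y)$ one has $\bi_y^*\,i_Z^*\cF\simeq\cO_{Z,y}\otimes^{L}_{\cO_{X,x}}\bi_x^*\cF$, and right $t$-exactness of derived pullback keeps this in $D^{\leq p(x)}=D^{\leq p_Z(y)}$. On the $!$-side, transitivity of upper-shriek gives $\bi_y^!\,i_Z^!\simeq\bi_x^!$, so $\bi_x^!\cF\in D^{\geq p(x)}$ transports to $\bi_y^!\,i_Z^!\cF\in D^{\geq p_Z(y)}$. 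For (c), with $Z$ closed, I would split on whether $x\in Z^\tp$: if not, $i_{Z*}\cG$ vanishes near $x$, so both $\bi_x^*(i_{Z*}\cG)$ and $\bi_x^!(i_{Z*}\cG)$ vanish; if so, exactness of the extension by zero gives $\bi_x^*\,i_{Z*}\cG\simeq\bi_x^*\cG$, while $i_Z^!\,i_{Z*}\simeq\id$ together with transitivity gives $\bi_x^!\,i_{Z*}\cG\simeq\bi_x^!\cG$, and since $p_Z(x)=p(x)$ the inequalities transport, yielding both inclusions.

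For (a) the essential input is the local duality interchange $\bi_x^!\,\bD\simeq\bD_x\,\bi_x^*$, where $\bD_x=R\shhom_{\cO_x}(-,\om_x)$ with local dualizing complex $\om_x=\bi_x^!\om_X$; I would observe that the amplitude of $\om_x$ is exactly what produces the shift by $\dim x$, so that $\bi_x^*\cF\in D^{\leq p(x)}$ becomes equivalent to $\bi_x^!\bD\cF\in D^{\geq -\dim x-p(x)}=D^{\geq\ol p(x)}$. This yields the inclusion $\bD(D^{p,\leq 0})\subseteq D^{\ol p,\geq 0}$, and since $\om_X$ being dualizing makes $\bD$ an involution, a second application of $\bD$ upgrades it to the asserted equality.

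The main obstacle is not in any of (a)--(c) themselves, whose arguments are formal once the reduction is in place; rather it is ensuring the reduction is legitimate, i.e.\ that every functor in sight commutes with $\Forg$. That work has already been done in Lemma~\ref{direct and inverse images for locally closed embedding} and Lemma~\ref{dualizing object iff Forg is dualizing}. The one genuinely substantive (but inherited) point is the local-duality amplitude computation behind (a), which I would import verbatim from \cite{AB10} once duality is known to commute with $\Forg$.
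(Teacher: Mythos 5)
Your proposal is correct and matches the paper's approach: the paper proves this lemma simply by citing \cite[Lemma 3.3]{AB10}, the implicit justification being exactly your observation that the aisle conditions \eqref{definition of D^+ and D^-} are checked through $\bi_x^*,\bi_x^!$ after forgetting the $(G,\cL)$-action, so the non-equivariant argument applies verbatim once $\Forg$ is known to intertwine $i_Z^*, i_Z^!, i_{Z*}$ (Lemma~\ref{direct and inverse images for locally closed embedding}) and $\bD$ (Lemma~\ref{dualizing object iff Forg is dualizing}). Your spelled-out reductions and the local-duality computation for (a) are faithful reproductions of the AB10 arguments the paper is invoking, so there is nothing substantively different here.
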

\begin{proof}
In line with \cite[Lemma 3.3]{AB10}.
\end{proof}

\begin{lem} \label{Homs are zero between p+ and p-}
For $\cF \in D^{p, \leq 0}(X)$, $\cG \in D^{p, \geq 0}(X)$ we have $\Hom(\cF, \cG) = 0$.
\end{lem}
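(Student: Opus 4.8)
The plan is to follow the dévissage of \cite[Lemma~3.4]{AB10}, running an induction on $\dim \supp \cF$ and using the open--closed decomposition together with the adjunctions among $i_Z^*, i_{Z*}, i_Z^!$ recorded in Section~\ref{subsec: direct and inverse image}. The base case, where $\cF$ and $\cG$ live on a single smooth orbit, will be reduced to the vanishing for the standard t-structure; this is where the degree gap between the $*$- and $!$-conditions is genuinely used.

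First I would reduce to the case $\supp \cF = X$. By Lemma~\ref{sheaf with support on closed is pushforward from subscheme} write $\cF \simeq i_{Z*}\cF_Z$ with $i_Z: Z \hookrightarrow X$ a closed $(G,\cL)$-invariant subscheme and $\supp \cF_Z = Z$. Since $\bi_x^* \cF = \bi_x^* \cF_Z$ for $x \in Z^{\tp}$, we have $\cF_Z \in D^{p_Z, \leq 0}(Z)$, and the adjunction $(i_{Z*}, i_Z^!)$ together with Lemma~\ref{basic properties of perversity}(b) gives $\Hom_X(\cF, \cG) \simeq \Hom_Z(\cF_Z, i_Z^!\cG)$ with $i_Z^!\cG$ still in the relevant perverse subcategory of $Z$. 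Hence I may replace $X$ by $Z$, assume $\supp \cF = X$, and induct on $\dim X$.

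For the inductive step, choose an open $(G,\cL)$-invariant $j: U \hookrightarrow X$ that is smooth and a union of top-dimensional orbits, with closed complement $i: W \hookrightarrow X$, so that $\dim W < \dim X$. Applying $\Hom(\cF, -)$ to the triangle $i_* i^! \cG \to \cG \to Rj_* j^* \cG \to (i_* i^! \cG)[1]$ and using the adjunctions $(i^*, i_*)$ and $(j^*, Rj_*)$ (recall $j^! = j^*$ for an open embedding) yields an exact sequence whose outer terms are $\Hom_W(i^* \cF, i^! \cG)$ and $\Hom_U(j^* \cF, j^* \cG)$. By Lemma~\ref{basic properties of perversity}(b) the restrictions land in the appropriate perverse subcategories of $W$ and $U$; the first $\Hom$ vanishes by the inductive hypothesis on $W$, so it remains to treat the open stratum.

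On $U$, after again peeling off the support of $j^* \cF$ as in the first step, the problem becomes the vanishing $\Hom_S(\cF_S, \cG_S) = 0$ on a single smooth orbit $S$ carrying a transitive algebroid. Here Lemma~\ref{coherent module is locally free} shows that every coherent $\cL$-module is locally free, so the $*$-stalk at the generic point of $S$ computes the honest cohomology sheaves; consequently the perverse t-structure for the constant perversity on $S$ is a shift of the standard t-structure on $\Coh^{\cL}(S)$, and the claim follows from the standard axiom $\Hom(D^{\leq 0}, D^{\geq 1}) = 0$. I expect the main obstacle to be precisely this base case: making rigorous that on a single orbit the perverse and shifted standard t-structures coincide, i.e. that the generic $*$- and $!$-conditions control the global cohomological amplitude. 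This is exactly the point where local freeness over the transitive algebroid and the relation between $\bi_x^*$ and $\bi_x^!$ at the generic point must be combined.
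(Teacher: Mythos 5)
Your overall strategy is the one the paper intends (its proof of Lemma~\ref{Homs are zero between p+ and p-} is simply a citation of \cite[Proposition~3.5]{AB10}): reduce to $\supp \cF = X$ via Lemma~\ref{sheaf with support on closed is pushforward from subscheme} and adjunction, split off a dense open, and induct on the dimension of the support. The first reduction is correct. The inductive step, however, has a genuine gap: for the closed complement $W = X \setminus U$ there is \emph{no} triangle $i_* i^! \cG \to \cG \to Rj_* j^* \cG$ with a fixed subscheme structure on $W$. The cone of $\cG \to Rj_* j^* \cG$ is the local cohomology $R\Gamma_W(\cG)$, which in the coherent setting is only the filtered colimit of $i_{Z*} i_Z^! \cG$ over all closed ($(G,\cL)$-invariant) subscheme structures $Z$ with $Z^{\tp} = W^{\tp}$. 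Consequently the outer term of your exact sequence is not $\Hom_W(i^*\cF, i^!\cG)$ but $\varinjlim_Z \Hom(i_Z^*\cF, i_Z^!\cG)$, and identifying it requires exactly the second preparatory lemma the paper reproduces right before this statement, namely \cite[Lemma~2.22]{AB10} --- which your proposal never invokes. Once it is invoked, the step is repaired: each term vanishes by the inductive hypothesis applied to $Z$, since $i_Z^*\cF$ and $i_Z^!\cG$ land in the correct subcategories by Lemma~\ref{basic properties of perversity} and $\dim Z < \dim X$.

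Two further points. First, after you have arranged $\supp \cF = X$, the support of $j^*\cF$ is all of $U$, so ``again peeling off the support'' on $U$ does nothing; what you actually need is that $U$ can be chosen so that the generic-point conditions become \emph{global} amplitude bounds on $U$. Your mechanism for this is correct and is genuinely different from \cite{AB10}: on a single orbit, Lemma~\ref{coherent module is locally free} makes all cohomology sheaves locally free, so the stalk bounds at the generic point $s$ (where $\bi_s^! = \bi_s^*$) force $j^*\cF \in D^{\leq p(s)}$ and $j^*\cG \in D^{\geq p(s)+1}$, and standard orthogonality applies. By contrast, \cite[Lemma~2.21]{AB10} (also quoted in the paper) produces such an open $U$ by shrinking, with no orbit structure at all --- which both sidesteps your unproven assertion that the top-dimensional orbits form a dense open with lower-dimensional complement, and shows the lemma holds for arbitrary perversities without any orbit hypotheses. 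Second, an off-by-one: the vanishing can only hold for $\cG \in D^{p, \geq 1}(X) = D^{p, > 0}(X)$, the form needed for the t-structure axioms and proved in \cite{AB10}; with $\cG \in D^{p, \geq 0}$ as literally printed it already fails for $X$ a point and $\cF = \cG$ a simple module placed in degree $-p$. Your base case tacitly assumes the shifted version --- the ``degree gap'' you invoke does not exist under the literal statement, so you should state explicitly that you are proving the corrected form.
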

\begin{proof}
In line with \cite[Proposition 3.5]{AB10}
\end{proof}

\begin{definition} \label{definition monotone and comonotone}
We say that the perversity $p$ is \textbf{monotone} if $p(x') \geq p(x)$ whenever $x' \in \ol{\{x\}}$. We say it is \textbf{strictly monotone} if $p(x') > p(x)$ whenever $x' \in \ol{\{x\}}$ and $x' \neq x$. Finally, we say it is \textbf{(strictly) comonotone} if the dual perversity $\ol p(x) = - \dim x - p(x)$ is (strictly) monotone.
\end{definition}

\begin{lem} \label{there is distinguished triangle with perverse truncation}
Suppose $p$ is monotone and comonotone. Then for any $\cF \in D^{b, (G, \cL)}_{\mathrm{coh}}(X)$ there are $\cF_1 \in D^{p, \leq 0} (X)$, $\cF_2 \in D^{p, > 0} (X)$ and a distinguished triangle
\[
\cF_1 \rightarrow \cF \rightarrow \cF_2 \rightarrow \cF_1[1]
\]
\end{lem}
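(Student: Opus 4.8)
The plan is to reproduce the inductive construction of the perverse truncation functors of \cite[\S3.1]{AB10}, organized as a Noetherian induction on the support $\bold Z = \supp\cF \subset (X/\cL)^{\mathrm{top}}$. The point is that the arguments of loc. cit. are formal consequences of Lemmata~\ref{sheaf with support on closed is pushforward from subscheme}, \ref{basic properties of perversity}, \ref{Homs are zero between p+ and p-} together with \cite[Lemmata 2.21, 2.22]{AB10}, \emph{except} for the recurring reduction of an object supported on a closed set to a complex of coherent sheaves, which is unavailable for $(G,\cL)$-modules. I would therefore route every such step through Lemma~\ref{sheaf with support on closed is pushforward from subscheme}, our algebroid analogue of \cite[Lemma 2.13]{AB10}, after which the construction proceeds essentially verbatim.

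\textbf{Base case: a single stratum.} Suppose $\bold Z = \{x\}$ is the generic point of an invariant orbit closure $S$. By Lemma~\ref{sheaf with support on closed is pushforward from subscheme} write $\cF = i_{Z*}\cF_Z$ for a thickening $i_Z:Z\hookrightarrow X$ with $Z^{\mathrm{top}}=\{x\}$, on which the perversity is the constant value $p(x)$. I claim $D^{p,\le 0}(Z)$ and $D^{p,\ge 0}(Z)$ are the standard $D^{\le p(x)}$ and $D^{\ge p(x)}$. For the $*$-condition this holds because $\bold i_x^*$ is localization at the generic point, hence exact, and detects the standard cohomology along the dense stratum; the $!$-condition then follows by duality (Lemma~\ref{basic properties of perversity}, part (a)), the dualizing object of the smooth stratum being a shifted line bundle whose shift is absorbed into $\ol p = -\dim x - p(x)$. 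Hence the standard truncation of $\cF_Z$, shifted by $p(x)$, gives the triangle on $Z$, and applying $i_{Z*}$ (which preserves both subcategories by Lemma~\ref{basic properties of perversity}, part (c)) yields the desired triangle on $X$.

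\textbf{Inductive step: gluing.} Let $S$ be a minimal stratum of $\bold Z$, put $i:Z=\overline S\hookrightarrow X$ and $j:U=X\setminus Z\hookrightarrow X$, so $\supp(j^*\cF)$ has fewer strata. By induction $j^*\cF$ has a truncation triangle on $U$ for the induced perversity $p_U$. I would glue it to a triangle on $X$ following the recollement argument of \cite[\S3.1]{AB10}: produce $\cF_2\in D^{p,>0}(X)$ with a map $\cF\to\cF_2$ restricting over $U$ to $j^*\cF\to {}^p\tau_{>0}^{U} j^*\cF$, and correct the discrepancy along $Z$ using the truncation available there by the dimension induction. Monotonicity and comonotonicity enter exactly as in \cite{BBD82}: combined with Lemma~\ref{basic properties of perversity}, part (c), they force the $Z$-supported correction terms to have the cohomological amplitude prescribed by the stalk conditions~\eqref{definition of D^+ and D^-}, so the glued objects genuinely lie in $D^{p,\le 0}$ and $D^{p,>0}$, while the orthogonality of Lemma~\ref{Homs are zero between p+ and p-} renders the construction functorial.

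\textbf{Main obstacle.} I expect the only genuine difficulty to be the absence of an extension-by-zero functor $j_!$ preserving coherence (and the failure of the naive open pushforward $j_*$ to be coherent), which prevents a literal transcription of the \cite{BBD82} gluing. This is precisely the role of the colimit-over-thickenings formula $\Hom(\cF,\bi_*\bi^!\cG)=\varinjlim_Z \Hom(\cF,i_{Z*}i_Z^!\cG)$ of \cite[Lemma 2.22]{AB10} together with Lemma~\ref{sheaf with support on closed is pushforward from subscheme}: the gluing datum relating $U$ and $Z$, encoded in \cite{BBD82} by $j_!$ and $j_*$, is here expressed through this limit and through the realization of $Z$-supported objects as pushforwards from finite thickenings. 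Since both ingredients have already been established in the $(G,\cL)$-equivariant setting, the remainder of the argument coincides with \cite{AB10} and should present no further difficulty.
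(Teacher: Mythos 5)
Your proposal takes essentially the same route as the paper, whose entire proof of this lemma is the citation ``in line with \cite[Theorem 1]{Bez00} (see also \cite[Theorem 3.10]{AB10})'' --- i.e.\ Deligne's inductive construction of the truncation triangle over the support, with the equivariant-specific inputs being exactly the tools you identify: Lemma~\ref{sheaf with support on closed is pushforward from subscheme} replacing the (unavailable) realization by complexes of coherent sheaves, and the colimit-over-thickenings formula of \cite[Lemma 2.22]{AB10} replacing $j_!$-style gluing. One minor simplification: in your base case the duality detour is unnecessary and slightly delicate (the thickening $Z$ is non-reduced, so its dualizing object is not a shifted line bundle); since $x$ is the generic point of $Z$ one has $\bi_x^! \simeq \bi_x^*$ directly, so both conditions in~\eqref{definition of D^+ and D^-} reduce at once to the standard t-structure shifted by $p(x)$.
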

\begin{proof}
In line with \cite[Theorem 1]{Bez00}, replacing ``$G$-equivariant'' by ``$(G, \cL)$-equivariant'' in appropriate places (see also \cite[Theorem 3.10]{AB10}).
\end{proof}

Combining Lemmata~\ref{basic properties of perversity},~\ref{Homs are zero between p+ and p-},~\ref{there is distinguished triangle with perverse truncation}, we obtain:
\begin{thm} \label{conditions for perverse t-structure define t-structure}
    The formulae \eqref{definition of D^+ and D^-} define a t-structure on the category $D^{b, (G, \cL)}_{\mathrm{coh}} (X)$. Its heart is 
    denoted by $\cP^{(G, \cL)}_{\coh} (X)$ and is called the category of \textit{perverse coherent sheaves}. 

\end{thm}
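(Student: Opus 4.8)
The plan is to verify directly that the pair $(D^{p, \leq 0}(X),\, D^{p, \geq 0}(X))$ of full subcategories defined in~\eqref{definition of D^+ and D^-} satisfies the three axioms of a t-structure on $D^{b, (G, \cL)}_{\mathrm{coh}}(X)$, in the sense of \cite{BBD82}. All of the substantive work has already been isolated in the three preceding lemmata, so the argument amounts to matching each axiom with the corresponding lemma. Throughout I assume, exactly as in Lemma~\ref{there is distinguished triangle with perverse truncation}, that $p$ is monotone and comonotone; this is the hypothesis tacitly required for the conclusion of the theorem.

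First, the stability axiom. Writing $D^{p, \leq n} = D^{p, \leq 0}[-n]$ and $D^{p, \geq n} = D^{p, \geq 0}[-n]$, I must check the inclusions $D^{p, \leq 0}(X) \subset D^{p, \leq 1}(X)$ and $D^{p, \geq 1}(X) \subset D^{p, \geq 0}(X)$. These are immediate from the definitions in~\eqref{definition of D^+ and D^-}: if $\bi_x^* \cF \in D^{\leq p(x)}(\cO_x\md)$ for every $x \in (X/\cL)^{\tp}$, then a fortiori $\bi_x^* \cF \in D^{\leq p(x)+1}(\cO_x\md)$, and symmetrically for $\bi_x^!$; no further argument is needed.

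Second, the orthogonality axiom, namely $\Hom(\cF, \cG) = 0$ whenever $\cF \in D^{p, \leq 0}(X)$ and $\cG \in D^{p, \geq 1}(X)$. Since $D^{p, \geq 1}(X) \subset D^{p, \geq 0}(X)$ by the first step, this follows at once from Lemma~\ref{Homs are zero between p+ and p-}, which already yields the vanishing for all $\cG \in D^{p, \geq 0}(X)$. Finally, the truncation axiom: every $\cF \in D^{b, (G, \cL)}_{\mathrm{coh}}(X)$ fits into a distinguished triangle $\cF_1 \to \cF \to \cF_2 \to \cF_1[1]$ with $\cF_1 \in D^{p, \leq 0}(X)$ and $\cF_2 \in D^{p, \geq 1}(X)$. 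This is precisely Lemma~\ref{there is distinguished triangle with perverse truncation}, on observing that $D^{p, > 0}(X) = D^{p, \geq 1}(X)$. With the three axioms established, $(D^{p, \leq 0}, D^{p, \geq 0})$ is a t-structure, and its heart $\cP^{(G, \cL)}_{\coh}(X)$ is abelian by the general theory of \cite{BBD82}.

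There is no genuine obstacle in the theorem itself, since the real content lies in the three preparatory lemmata, established following \cite{Bez00, AB10}. The one point demanding care is the bookkeeping of conventions: one must confirm that the vanishing range supplied by Lemma~\ref{Homs are zero between p+ and p-} and the truncation range supplied by Lemma~\ref{there is distinguished triangle with perverse truncation} line up correctly with the off-by-one cohomological shift built into the t-structure axioms, which is exactly what the first (stability) step arranges. I would also remark that Lemma~\ref{basic properties of perversity}\ref{basic:duality} shows the duality functor $\bD$ interchanges $D^{p, \leq 0}$ and $D^{\ol p, \geq 0}$, so the dual perversity $\ol p$ gives a t-structure as well and $\bD$ exchanges the two.
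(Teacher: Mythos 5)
Your proposal is correct and takes essentially the same route as the paper, which proves the theorem precisely by combining Lemmata~\ref{basic properties of perversity}, \ref{Homs are zero between p+ and p-}, and \ref{there is distinguished triangle with perverse truncation} (the monotone/comonotone hypothesis being inherited from Lemma~\ref{there is distinguished triangle with perverse truncation}, exactly as you observe). You merely spell out the matching of the three axioms of \cite{BBD82} to the three lemmata, which the paper leaves implicit.
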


\subsection{IC-extension} \label{subsection: IC-extension}
In this subsection, we define the notion of coherent IC-extension, as in  \cite[Section 3.2]{Bez00}, \cite[Section 4.1]{AB10}. 

From now on, we assume that the perversity $p$ is strictly monotone and strictly comonotone (see Definition \ref{definition monotone and comonotone}).

Let $Y \hookrightarrow X$ be a locally closed $(G, \cL)$-invariant subscheme, which we decompose as the composition of open and closed embeddings $Y \hookrightarrow \ol Y \hookrightarrow X$ (note that $\ol Y$ is not assumed to be reduced here).
Denote by $\bold Z \subset (X / \cL)^\tp$ the topological space of $(\ol Y \setminus Y)$.

Define $p^+ = p^+_{\bold Z}, p^- = p_{\bold Z}^-: (\ol Y / \cL)^{\text{top}} \rightarrow \bZ$ as: 
$$
p^-(x)=\begin{cases}p(x), &x \not \in \bold Z \\
p(x) - 1, & x \in \bold Z; \end{cases} \qquad
p^+(x) = \begin{cases} p(x), & x \not\in \bold Z \\ 
p(x) + 1, & x \in \bold Z. \end{cases}
$$
Since $p$ is strictly monotone and strictly comonotone, both $p^+$ and $p^-$ are monotone and comonotone.

\begin{lem} \label{equivalent conditions for being in p^+ and p^-}
 Let $\cF \in \cP^{(G, \cL)}_\coh (\ol Y)$.
\begin{enumerate}[(a)]
\item \label{orto:a} The following conditions are equivalent:
\begin{enumerate}[(i)]
\item\label{orto:i} $\cF \in D^{p^-,\leq 0}(\ol Y)$.
\item\label{orto:ii} $i_Z^*(\cF)\in D^{p_{Z},< 0}(Z)$ for any closed subscheme $Z \subset \ol S$ with $ Z^\tp \subset \bold Z$.
\item\label{orto:iii} $\Hom(\cF, \cG)=0$ for all $\cG \in \cP^{(G, \cL)}_\coh (\ol Y)$ such that $\supp \cG \subset \bold Z$.
\end{enumerate}
\item\label{orto:b} The following conditions are equivalent:
\begin{enumerate}[(i)]
\item $\cF\in D^{p^+,\geq 0}(\ol Y)$.
\item $i_Z^!(\cF) \in D^{p_{Z}, >0}(Z)$ for any closed subscheme $Z \subset \ol Y$ with $Z^\tp \subset \bold Z$.
\item $\Hom(\cG, \cF) = 0$ for all $\cG \in \cP^{(G, \cL)}_\coh (\ol Y)$ such that $\supp \cG \subset \bold Z$.
\end{enumerate}
\end{enumerate}
\end{lem}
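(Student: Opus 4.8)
The plan is to transpose the orthogonality argument of \cite[Section~4.1]{AB10} (cf. \cite[Section~3.2]{Bez00}) to the HC-algebroid setting, using the functors $i_Z^*, i_Z^!, i_{Z*}$ on $D^{b,(G,\cL)}_{\coh}$ and their adjunctions from Section~\ref{subsection: Derived category of equivariant sheaves}, the truncations of the (strictly monotone, strictly comonotone) $p$- and $p_Z$-t-structures, and the duality $\bD$. First I would deduce part~\ref{orto:b} from part~\ref{orto:a} by duality. By Lemma~\ref{basic properties of perversity}(a), $\bD(D^{p,\leq 0}) = D^{\ol p,\geq 0}$; a direct check of the defining formulas gives $\ol{(p^-)} = (\ol p)^+$ on $(\ol Y/\cL)^\tp$, and $\bD$ sends the $p$-heart to the $\ol p$-heart, interchanges $i_Z^* \leftrightarrow i_Z^!$ and $D^{p_Z,<0} \leftrightarrow D^{p_Z,>0}$, identifies $\Hom(\cF,\cG)$ with $\Hom(\bD\cG,\bD\cF)$, and preserves the support condition. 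Since $p$ is strictly monotone and comonotone iff $\ol p$ is, applying part~\ref{orto:a} for the perversity $\ol p$ to $\bD\cF$ yields part~\ref{orto:b} for $p$ and $\cF$.

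For part~\ref{orto:a} I would first establish \ref{orto:i}$\Leftrightarrow$\ref{orto:ii}. Since $\cF$ is perverse, $\bold i_x^*\cF \in D^{\leq p(x)}$ for every $x$, so the condition $\cF \in D^{p^-,\leq 0}$ is automatic off $\bold Z$ and amounts to $\bold i_x^*\cF \in D^{\leq p(x)-1}$ for all $x \in \bold Z$. Because $\bold i_x^* \circ i_Z^* = \bold i_x^*$ for $x \in Z^\tp$ and $p_Z = p$ on $\bold Z$, this pointwise condition is exactly \ref{orto:ii} (take $Z = \ol{\{x\}}$ in one direction, and read off stalks in the other). Next, \ref{orto:ii}$\Rightarrow$\ref{orto:iii}: for $\cG \in \cP^{(G,\cL)}_\coh(\ol Y)$ with $\supp\cG \subset \bold Z$, Lemma~\ref{sheaf with support on closed is pushforward from subscheme} gives $\cG \simeq i_{Z*}\cG_Z$ with $Z$ closed $(G,\cL)$-invariant, $Z^\tp \subset \bold Z$; as $\cG$ is perverse and $i_Z^! i_{Z*} \simeq \id$, Lemma~\ref{basic properties of perversity}(b) gives $\cG_Z \simeq i_Z^!\cG \in D^{p_Z,\geq 0}(Z)$. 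By adjunction $\Hom(\cF,\cG) \simeq \Hom(i_Z^*\cF,\cG_Z)$, which vanishes by Lemma~\ref{Homs are zero between p+ and p-} since $i_Z^*\cF \in D^{p_Z,<0}(Z)$ while $\cG_Z \in D^{p_Z,\geq 0}(Z)$.

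The converse \ref{orto:iii}$\Rightarrow$\ref{orto:ii} I would prove by contraposition. Suppose some closed $Z \subset \ol Y$ with $Z^\tp \subset \bold Z$ has $i_Z^*\cF \notin D^{p_Z,<0}(Z)$; since $i_Z^*\cF \in D^{p_Z,\leq 0}(Z)$ (perversity of $\cF$ and Lemma~\ref{basic properties of perversity}(b)), its top perverse cohomology $\cG_Z := {}^{p_Z}H^0(i_Z^*\cF)$ is a nonzero object of $\cP^{(G,\cL)}_\coh(Z)$, equipped with a canonical nonzero truncation map $i_Z^*\cF \to \cG_Z$. Then $\cG := i_{Z*}\cG_Z$ lies in $\cP^{(G,\cL)}_\coh(\ol Y)$ by Lemma~\ref{basic properties of perversity}(c) and is supported on $\bold Z$, while under $\Hom(\cF,i_{Z*}\cG_Z) \simeq \Hom(i_Z^*\cF,\cG_Z)$ the above map is a nonzero element, contradicting \ref{orto:iii}.

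I expect the only genuine obstacle to lie not in this orthogonality bookkeeping, which is formal once the t-structure and its truncations are available, but in the single point where the group-equivariant proofs of \cite{Bez00,AB10} realize a sheaf supported on $\bold Z$ as a bounded complex of coherent sheaves --- a step unavailable for Lie-algebroid equivariance (Example~\ref{example: g-modules on point}). This is exactly what Lemma~\ref{sheaf with support on closed is pushforward from subscheme} supplies, via induction on the number of nonzero cohomology sheaves and the colimit-over-thickenings computation of $R\Hom$; once it is invoked as above, the remainder goes through verbatim.
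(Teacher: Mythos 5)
Your overall architecture --- (i)$\Leftrightarrow$(ii) by a stalkwise check, (iii)$\Rightarrow$(ii) by perverse truncation and pushforward, and part~\ref{orto:b} by duality using $\ol{(p^-)}=(\ol p)^{+}$ --- is the same as the paper's, which runs \cite[Lemma~4.1]{AB10} in the algebroid-equivariant setting. However, your step (ii)$\Rightarrow$(iii) contains a genuine error: for a closed embedding of schemes one does \emph{not} have $i_Z^! i_{Z*}\simeq \id$ in the coherent derived category (that identity is a feature of constructible sheaf theory, not of coherent sheaves). For instance, for $X=\bA^1$, $Z=\{0\}$ and $N$ the skyscraper, $i_Z^!i_{Z*}N=R\shhom_{\cO_X}(\cO_Z,i_{Z*}N)$ has cohomology in degrees $0$ and $1$. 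Consequently the $\cG_Z$ with $\cG\simeq i_{Z*}\cG_Z$ supplied by Lemma~\ref{sheaf with support on closed is pushforward from subscheme} is \emph{not} $i_Z^!\cG$, and there is no reason for it to lie in $D^{p_Z,\geq 0}(Z)$: Lemma~\ref{basic properties of perversity}(c) gives $i_{Z*}(D^{p_Z,\geq 0})\subset D^{p,\geq 0}$, not the converse, and costalks on $X$ of $i_{Z*}\cG_Z$ involve $i_Z^!i_{Z*}$, so perversity of $\cG$ does not descend to $\cG_Z$. The orthogonality $\Hom(i_Z^*\cF,\cG_Z)=0$ therefore does not follow as written, and truncating $\cG_Z$ does not repair it, since $\Hom(i_Z^*\cF,\tau^{p_Z,<0}\cG_Z)$ is uncontrolled.

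The intended route (this is precisely why the paper imports \cite[Lemma~2.22]{AB10} among the preparatory lemmata) is local cohomology over thickenings: since $\supp\cG\subset\bold Z$, the canonical map $\bi_*\bi^!\cG\to\cG$ is an isomorphism, whence by adjunction
\[
\Hom(\cF,\cG)\;=\;\Hom\bigl(\cF,\bi_*\bi^!\cG\bigr)\;=\;\varinjlim_Z \Hom\bigl(\cF,\,i_{Z*}i_Z^!\cG\bigr)\;=\;\varinjlim_Z \Hom\bigl(i_Z^*\cF,\,i_Z^!\cG\bigr),
\]
where $Z$ runs over closed invariant subschemes with $Z^\tp=\bold Z$; each term vanishes by Lemma~\ref{Homs are zero between p+ and p-}, because $i_Z^*\cF\in D^{p_Z,<0}(Z)$ by (ii) while $i_Z^!\cG\in D^{p_Z,\geq 0}(Z)$ by Lemma~\ref{basic properties of perversity}(b) applied to the perverse $\cG$. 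Note that this implication does not need Lemma~\ref{sheaf with support on closed is pushforward from subscheme} at all, so your closing diagnosis --- that the support-realization lemma is the only non-formal input --- misidentifies the crucial ingredient: it is the colimit description of $\Hom$ into sections supported on $\bold Z$ that replaces the constructible-style $i^!i_*\simeq\id$. The remaining parts of your proposal (the stalkwise equivalence (i)$\Leftrightarrow$(ii), the contrapositive (iii)$\Rightarrow$(ii) via $\cG_Z={}^{p_Z}H^0(i_Z^*\cF)$ together with Lemma~\ref{basic properties of perversity}(c), and the duality reduction of part~\ref{orto:b}) are correct and consistent with the paper's argument.
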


\begin{proof}
In line with \cite[Lemma 4.1]{AB10}.
\end{proof}

From now on, we assume that there is a finite number of $(G, \cL)$-orbits in $X$, and dimensions of adjacent orbits differ at least by 2, that is, for any $(G, \cL)$-orbit $S$, one has
\begin{equation} \label{codim 2}
\codim_{\ol S} S \geq 2.
\end{equation}
We restrict our attention to the IC-extension from a single $(G, \cL)$-orbit $S$ to its closure $\ol S$.
Denote by $\bold Z \subset (X / \cL)^\tp$ the topological space of $\ol S \setminus S$.

\begin{lem} \label{serre quotient by subcategory of sheaves supported on closed subset}
Under assumption~\eqref{codim 2}, the category $\Coh^{(G, \cL)} S$ is equivalent to the Serre quotient of $\Coh^{(G, \cL)} \ol S$ by the subcategory $\Coh^{(G, \cL)}_{\bold Z} \ol S$ of sheaves, supported on $\bold Z$:
\[
\Coh^{(G, \cL)} S \simeq \Coh^{(G, \cL)} \ol S \left/ \Coh^{(G, \cL)}_{\bold Z} \ol S \right. .
\]
\end{lem}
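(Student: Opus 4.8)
The plan is to realize $\Coh^{(G, \cL)} S$ as the target of an exact localization functor admitting a fully faithful right adjoint, which by standard localization theory is the same as exhibiting it as the Serre quotient by the kernel. Write $j \colon S \hookrightarrow \ol S$ for the open embedding with closed complement $\bold Z = \ol S \setminus S$; note that $S$ is $\cL$-invariant (being an orbit) and $\bold Z$ is a closed $(G, \cL)$-invariant subset. First I would record the adjunction $(j^*, j_*)$ between $\qcoh^{(G, \cL)} \ol S$ and $\qcoh^{(G, \cL)} S$: the functor $j^*$ is exact and preserves $\cO$-coherence, and by Lemma~\ref{lem: pushforward for open embedding} the algebroid pushforward $j_*$ coincides with the ordinary quasi-coherent pushforward and respects the $(G, \cL)$-equivariance, so $(j^*, j_*)$ is the usual open-embedding adjunction. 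Since $j$ is open, the counit $j^* j_* N \to N$ is an isomorphism for every $N$.

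The crucial step is to show that $j_* N$ is $\cO_{\ol S}$-coherent for every $N \in \Coh^{(G, \cL)} S$, so that $j_*$ restricts to a functor $\Coh^{(G, \cL)} S \to \Coh^{(G, \cL)} \ol S$. Here I would use that $S$ is an orbit, so the restricted algebroid $\cL|_S$ is transitive and Lemma~\ref{coherent module is locally free} makes $N$ locally free, hence torsion-free; combined with the codimension-$2$ hypothesis~\eqref{codim 2} (equivalently, $\bold Z$ has codimension $\geq 2$ in $\ol S$), the Grothendieck-type finiteness result \cite[Lemma 5.1.1(2)]{Kae98}, applied exactly as in the proof of Proposition~\ref{prop: gaga for open of codim 2}, guarantees that the non-derived pushforward $j_* N$ is coherent. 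This is the only place where~\eqref{codim 2} enters, and it is the main obstacle: without it a coherent module on the open orbit need not admit any coherent extension, as emphasized in the introduction.

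With coherence of $j_*$ in hand, I would package the conclusion as follows. The exact functor $j^* \colon \Coh^{(G, \cL)} \ol S \to \Coh^{(G, \cL)} S$ has kernel exactly $\Coh^{(G, \cL)}_{\bold Z} \ol S$ (a coherent module restricts to $0$ on $S$ iff it is supported on $\bold Z$), and this kernel is a Serre subcategory, since being supported on the closed invariant set $\bold Z$ is stable under subobjects, quotients, and extensions. The now-coherent functor $j_*$ is right adjoint to $j^*$, and the isomorphism $j^* j_* \cong \id$ shows it is fully faithful, whence $j^*$ is also essentially surjective. Finally I would invoke the standard fact that an exact functor of abelian categories admitting a fully faithful right adjoint induces an equivalence from the quotient by its kernel onto its target, which gives precisely $\Coh^{(G, \cL)} \ol S \left/ \Coh^{(G, \cL)}_{\bold Z} \ol S \right. \simeq \Coh^{(G, \cL)} S$.

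Should one prefer a direct verification over the abstract localization theorem, the same ingredients suffice to check full faithfulness of the induced functor by hand: for $A, B \in \Coh^{(G, \cL)} \ol S$ the natural maps $A \to j_* j^* A$ and $B \to j_* j^* B$ have kernel and cokernel supported on $\bold Z$, hence become isomorphisms in the quotient, any $\psi \colon j^* A \to j^* B$ is realized by the morphism $j_* \psi \colon j_* j^* A \to j_* j^* B$ of coherent sheaves (coherence again by the step above), and a morphism vanishing after restriction to $S$ has image supported on $\bold Z$, giving injectivity on the relevant $\Hom$-colimits. Either route reduces the entire statement to the single coherence input, which is the heart of the matter.
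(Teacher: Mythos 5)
Your proposal is correct and matches the paper's proof in all essentials: the decisive input in both is that coherent modules on the orbit $S$ are locally free (Lemma~\ref{coherent module is locally free}), hence torsion-free, so that the codimension-$2$ assumption~\eqref{codim 2} together with \cite[Lemma 5.1.1(2)]{Kae98} makes the non-derived pushforward $j_*$ land in $\Coh^{(G, \cL)} \ol S$. The only difference is packaging --- the paper checks fullness and essential surjectivity of the induced functor directly via $f = j^*(R^0 j_* f)$, while you invoke the standard localization criterion (exact functor with fully faithful right adjoint), and your ``direct verification'' variant is precisely the paper's argument.
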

Note that this is a standard fact (even without assumption \eqref{codim 2}) for non-equivariant sheaves, or group-equivariant sheaves. The standard proof uses that a quasi-coherent sheaf is a union of coherent subsheaves, which is not the case for equivariance over Lie algebroids, as noted above. Thus, we include a proof.

\begin{proof}
Let $j: S \subset \ol S$ be the open embedding. $j^*$ naturally induces the faithful exact functor
\begin{equation} \label{quotient functor by subcategory of sheaves, supported on closed}
\Coh^{(G, \cL)} \ol S \left/ \Coh^{(G, \cL)}_{\bold Z} \ol S \rightarrow  \Coh^{(G, \cL)} S \right. .
\end{equation}

Let $\cF \xrightarrow{f} \cG$ be a morphism in $\Coh^{(G, \cL)} S$. By Lemma~\ref{coherent module is locally free}, $\cF$ and $\cG$ are locally free, and hence torsion-free. Using \eqref{codim 2},  by \cite[Lemma 5.1.1 (2)]{Kae98}, this implies that non-derived direct images $R^0j_*(\cF)$ and $R^0j_* (\cG)$ are coherent (again, this is a variant of 
\cite[VIII, Corollaire 2.3]{Gro68}).

Thus, $\cF \xrightarrow{f} \cG = j^*(R^0j_* \cF \xrightarrow{R^0j_* f} R^0j_* \cG)$, hence \eqref{quotient functor by subcategory of sheaves, supported on closed} is full and essentially surjective, which finishes the proof.
\end{proof}


Define the category $\cP_{!*}(S) \subset \cP^{(G, \cL)}_{\coh} (\overline S)$ as \[\cP_{!*}(S) = D^{p^-, \leq 0}(\ol S) \cap D^{p^+, \geq 0}(\ol S).\]

Let $s$ be the generic point of $S$. Then the category $\cP_{\coh}^{(G, \cL)}(S)$ is naturally equivalent to $\Coh^{(G, \cL)}(S)[-p(s)]$ --- the abelian category of coherent sheaves on $S$, put in the cohomological degree $[-p(s)]$.
Consider also the abelian category $\Coh^{(G, \cL)}(\ol S)[-p(s)]$.

By results of previous subsection, $p^+$ and $p^-$ define t-structures on $D^{b, (G, \cL)}_\coh (\ol S)$. Let $\tau^+, \tau^-$ be the corresponding truncation functors. Define the functor 
\begin{align*}
J_{!*}: \Coh^{(G, \cL)}(\ol S)[-p(s)] &\rightarrow D^{b, (G, \cL)}_\coh (\ol S) \\
\cF &\mapsto \tau_{\leq 0}^- \circ \tau^+_{\geq 0} \cF.
\end{align*}
\begin{lem}[\cite{AB10}, Lemma 4.3] \label{functor J_!*}
The following holds:
\begin{enumerate}[(a)]
    \item $J_{!*}$ takes values in $\cP_{!*}(S)$.
    \item If a morphism $f$ in the category $\Coh^{(G, \cL)}(\ol S)[-p(s)]$ is such that $f|_S$ is an isomorphism, then $J_{!*}f$ is an isomorphism
\end{enumerate}
\end{lem}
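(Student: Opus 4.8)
The plan is to follow the argument of \cite[Lemma~4.3]{AB10} essentially verbatim. The statement is formal: it uses only the two t-structures attached to $p^+$ and $p^-$ (which are genuine monotone, comonotone perversities precisely because $p$ is strictly monotone and strictly comonotone) together with the orthogonality description of Lemma~\ref{equivalent conditions for being in p^+ and p^-}, all of which are now available in the $(G, \cL)$-equivariant setting. None of the algebroid-specific phenomena (e.g.\ the failure of $D^b\Coh \ne D^{b}_{\coh}\qcoh$) intervene here, since everything takes place downstream of the established t-structure formalism.

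For part~(a), I would write $\cG = \tau^+_{\geq 0}\cF$, so that $\cG \in D^{p^+, \geq 0}$ and $J_{!*}\cF = \tau^-_{\leq 0}\cG \in D^{p^-, \leq 0}$ both hold by construction; the second already gives one half of the claim. For the other half, $J_{!*}\cF \in D^{p^+, \geq 0}$, I would use the defining distinguished triangle
\[
\tau^-_{>0}\cG[-1] \to J_{!*}\cF \to \cG \xrightarrow{+1},
\]
and observe that $\tau^-_{>0}\cG \in D^{p^-, >0} = D^{p^-, \geq 1}$ by definition of the $p^-$-truncation. The key elementary point is that $p^+(x) \leq p^-(x) + 2$ for every $x$ (the two perversities differ by exactly $2$ on $\bold Z$ and agree off $\bold Z$), whence $D^{p^-, \geq 2} \subseteq D^{p^+, \geq 0}$. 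Applying this to $\tau^-_{>0}\cG[-1] \in D^{p^-, \geq 2}$ shows both outer terms of the triangle lie in $D^{p^+, \geq 0}$, and since that subcategory is closed under extensions, so does the middle term $J_{!*}\cF$. This establishes $J_{!*}\cF \in \cP_{!*}(S)$.

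For part~(b), set $u = J_{!*}f$. By part~(a) both $J_{!*}\cF_1$ and $J_{!*}\cF_2$ lie in $\cP_{!*}(S) \subseteq \cP^{(G, \cL)}_{\coh}(\ol S)$, so $u$ is a morphism in the abelian category of perverse sheaves and has a kernel and cokernel there. Since the truncations $\tau^\pm$ commute with restriction to the open orbit $S$ (on which $p^+ = p = p^-$ and $\cF_i|_S$ already lies in the heart $\Coh^{(G,\cL)}(S)[-p(s)]$), one gets $J_{!*}\cF_i|_S \simeq \cF_i|_S$ and $u|_S = f|_S$, an isomorphism by hypothesis. As $i_S^*$ is exact on perverse sheaves (Lemma~\ref{basic properties of perversity}), $\ker u$ and $\coker u$ restrict to $0$ on $S$, i.e.\ are supported on $\bold Z$. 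Now $\ker u \hookrightarrow J_{!*}\cF_1$ exhibits a perverse subobject supported on $\bold Z$; since $J_{!*}\cF_1 \in D^{p^+, \geq 0}$, the equivalence (b)(iii) of Lemma~\ref{equivalent conditions for being in p^+ and p^-} forces $\ker u = 0$, and dually $J_{!*}\cF_2 \in D^{p^-, \leq 0}$ together with (a)(iii) forces $\coker u = 0$. Hence $u$ is an isomorphism.

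The main obstacle --- really the only point requiring care --- is the membership $J_{!*}\cF \in D^{p^+, \geq 0}$ in part~(a): naively one controls the correction term $\tau^-_{>0}\cG$ only with respect to $p^-$, which is weaker than $p^+$ by up to two cohomological degrees on $\bold Z$. The resolution is that in the triangle defining $J_{!*}\cF$ this correction term enters shifted by $[-1]$, exactly compensating the discrepancy $p^+ \leq p^- + 2$. Everything else is a routine transcription of the t-structure bookkeeping from \cite{AB10}, with ``$G$-equivariant'' replaced by ``$(G, \cL)$-equivariant''.
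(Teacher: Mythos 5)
Your proof is correct and follows essentially the same route as the paper, which simply defers to \cite[Lemma~4.3]{AB10}: part (a) via the truncation triangle together with the inequality $p^+ \leq p^- + 2$ and extension-closedness of $D^{p^+,\geq 0}$, and part (b) via kernel/cokernel in the perverse heart killed by the orthogonality criteria of Lemma~\ref{equivalent conditions for being in p^+ and p^-}. You also correctly observe that the argument for (a) needs no hypothesis on $\cF$ beyond boundedness, matching the parenthetical remark in the paper's proof that $J_{!*}$ lands in $\cP_{!*}(S)$ on all of $D^{b,(G,\cL)}_\coh(\ol S)$.
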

\begin{proof}
In line with \cite[Lemma 4.3]{AB10} (in fact, it holds for $J_{!*}$ defined on the whole category $D^{b, (G, \cL)}_\coh (\ol S)$, hence for its restriction to $\Coh^{(G, \cL)}(\ol S)[-p(s)]$).
\end{proof}

\begin{thm}
Functor $j^*$ induces an equivalence between $\cP_{!*}(S)$ and $\cP^{(G, \cL)}_\coh(S)$. 

The inverse equivalence
\[
\cP^{(G, \cL)}_\coh(S) \rightarrow \cP_{!*}(S) \subset \cP^{(G, \cL)}_\coh(\ol S)
\]
is denoted by $j_{!*}$ or $\IC(S, -)$, and is called the coherent Goresky--MacPherson or $\IC$ extension.
\end{thm}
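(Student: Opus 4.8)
The plan is to prove that $j^*$ restricts to a fully faithful functor $\cP_{!*}(S)\to\cP^{(G,\cL)}_\coh(S)$ and to construct a functor $j_{!*}$ with $j^*\circ j_{!*}\simeq\id$; full faithfulness together with such a one-sided inverse forces $j^*$ to be an equivalence with quasi-inverse $j_{!*}$. This follows the pattern of \cite[Section~4.1]{AB10}. First I would record two structural facts. Since $p^-\le p\le p^+$ pointwise, $\cP_{!*}(S)=D^{p^-,\le 0}(\ol S)\cap D^{p^+,\ge 0}(\ol S)\subset D^{p,\le 0}(\ol S)\cap D^{p,\ge 0}(\ol S)=\cP^{(G,\cL)}_\coh(\ol S)$, so every object of $\cP_{!*}(S)$ is an honest perverse sheaf on $\ol S$; by Lemma~\ref{equivalent conditions for being in p^+ and p^-} these are precisely the perverse sheaves with no nonzero perverse sub- or quotient object supported on $\bold Z$. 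Second, because $p^{\pm}$ agree with $p$ away from $\bold Z$, Lemma~\ref{basic properties of perversity}\,(b) bounds $j^*=j^!$ on both sides, so $j^*$ is t-exact and lands in $\cP^{(G,\cL)}_\coh(S)$.

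For faithfulness, a morphism $\phi\colon\cF_1\to\cF_2$ in $\cP_{!*}(S)$ with $j^*\phi=0$ has perverse image $\im\phi\hookrightarrow\cF_2$ supported on $\bold Z$ (as $j^*$ is exact on the hearts), hence $\im\phi=0$ by Lemma~\ref{equivalent conditions for being in p^+ and p^-}\,(b). For fullness I would use the recollement triangle $i_*i^!\cF_2\to\cF_2\to j_*j^*\cF_2\xrightarrow{+1}$, where $i$ is a closed subscheme structure on $\bold Z$ (Lemma~\ref{sheaf with support on closed is pushforward from subscheme}) and $j_*$ is the derived pushforward, equal to $j_+$ by Lemma~\ref{lem: pushforward for open embedding}. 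Adjunction gives $\Hom_S(j^*\cF_1,j^*\cF_2)\simeq\Hom_{\ol S}(\cF_1,j_*j^*\cF_2)$, so the long exact sequence reduces fullness to the vanishing $\Hom_{\ol S}(\cF_1,i_*i^!\cF_2[1])=0$. By $(i^*,i_*)$-adjunction this equals $\Hom_Z(i^*\cF_1,i^!\cF_2[1])$; here Lemma~\ref{basic properties of perversity}\,(b) together with $p^-_{\bold Z}=p_{\bold Z}-1$ gives $i^*\cF_1\in D^{p_Z,<0}(Z)$, while Lemma~\ref{equivalent conditions for being in p^+ and p^-}\,(b) gives $i^!\cF_2\in D^{p_Z,>0}(Z)$, i.e. $i^!\cF_2[1]\in D^{p_Z,\ge 0}(Z)$, and the orthogonality of Lemma~\ref{Homs are zero between p+ and p-}, applied on $Z$ after a shift, makes the Hom vanish. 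The crucial input is the \emph{strict} monotonicity ($p^-=p-1$ on $\bold Z$), which supplies exactly the degree shift needed.

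To build the quasi-inverse, given $\cG\in\cP^{(G,\cL)}_\coh(S)=\Coh^{(G,\cL)}(S)[-p(s)]$ I would use Lemma~\ref{serre quotient by subcategory of sheaves supported on closed subset} to lift it to some $\tilde\cG\in\Coh^{(G,\cL)}(\ol S)[-p(s)]$ with $j^*\tilde\cG\simeq\cG$, and set $j_{!*}\cG:=J_{!*}(\tilde\cG)$, which lies in $\cP_{!*}(S)$ by Lemma~\ref{functor J_!*}\,(a). Any two such lifts are joined by morphisms that are isomorphisms over $S$, and $J_{!*}$ inverts these by Lemma~\ref{functor J_!*}\,(b); since these are precisely the morphisms inverted in the Serre quotient of Lemma~\ref{serre quotient by subcategory of sheaves supported on closed subset}, its universal property shows $j_{!*}$ is a well-defined functor on $\cP^{(G,\cL)}_\coh(S)$. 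Finally, the truncations $\tau^{\pm}$ defining $J_{!*}$ alter only cohomology supported on $\bold Z$, so $j^*J_{!*}(\tilde\cG)\simeq j^*\tilde\cG\simeq\cG$, giving $j^*\circ j_{!*}\simeq\id$. Combined with the full faithfulness above, this proves $j^*$ is an equivalence with inverse $j_{!*}=\IC(S,-)$.

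I expect the main obstacle to be the fullness step, and within it the vanishing $\Hom_{\ol S}(\cF_1,i_*i^!\cF_2[1])=0$: it requires setting up the recollement triangle correctly in the algebroid-equivariant derived category $D^{b,(G,\cL)}_\coh$ (relying on $j_*\simeq j_+$ and on the derived functors $i^!,i_*$ being available, as arranged in Section~\ref{subsection: Derived category of equivariant sheaves}), and it is exactly here that the strict monotonicity and comonotonicity hypotheses on $p$ are consumed. The replacement of the topological locus $\bold Z$ by an actual closed subscheme, and the attendant passage to the limit over thickenings, is a secondary technical point handled as in Lemma~\ref{sheaf with support on closed is pushforward from subscheme}.
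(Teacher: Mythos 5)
Your proposal is correct, and its constructive half coincides with the paper's proof: you build $j_{!*}$ exactly as the paper does, by combining the Serre-quotient description of $\Coh^{(G,\cL)}(S)$ (Lemma~\ref{serre quotient by subcategory of sheaves supported on closed subset}) with the fact that $J_{!*}$ inverts morphisms that are isomorphisms over $S$ (Lemma~\ref{functor J_!*}), giving $j^*\circ j_{!*}\simeq\id$. Where you genuinely diverge is the other half. The paper never proves full faithfulness of $j^*$ directly: it observes that the truncations $\tau^-_{\leq 0}\tau^+_{\geq 0}$ act trivially on $D^{p^-,\leq 0}(\ol S)\cap D^{p^+,\geq 0}(\ol S)$, so $J_{!*}\vert_{\cP_{!*}(S)}=\id$ and hence $j_{!*}\circ j^*=\id$ as well --- two lines, no recollement. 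You instead prove faithfulness via Lemma~\ref{equivalent conditions for being in p^+ and p^-} and fullness via the sections-with-support triangle, adjunction, and the orthogonality of Lemma~\ref{Homs are zero between p+ and p-}; this is essentially the original group-equivariant route of \cite[Section~4.1]{AB10}, and your degree bookkeeping ($i_Z^*\cF_1\in D^{p_Z,<0}$ from Lemma~\ref{basic properties of perversity} with $(p^-)_Z=p_Z-1$, and $i_Z^!\cF_2\in D^{p_Z,>0}$ from Lemma~\ref{equivalent conditions for being in p^+ and p^-}) is exactly right. One citation should be repaired, though: the third term of your triangle is not $i_*i^!\cF_2$ for a single closed subscheme structure $i$ on $\bold Z$, and Lemma~\ref{sheaf with support on closed is pushforward from subscheme} does not apply, since $\mathrm{fib}(\cF_2\to Rj_*j^*\cF_2)$ fails to have coherent cohomology ($Rj_*$ leaves $D^b_\coh$); the correct tool is the lemma the paper quotes from \cite[Lemma~2.22]{AB10}, which rewrites $\Hom_{\ol S}(\cF_1,\bi_*\bi^!\cF_2[1])$ as $\varinjlim_Z\Hom(i_Z^*\cF_1,i_Z^!\cF_2[1])$ over all closed subscheme structures $Z$ on $\bold Z$, after which your term-by-term vanishing applies verbatim. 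Since you flag the thickening issue yourself, this is a misattribution rather than a gap. Net comparison: your route makes the two-sided-inverse property transparent and stays closer to \cite{AB10}, at the price of invoking $Rj_*$, local cohomology, and the colimit over thickenings in the algebroid-equivariant category --- exactly the derived-pushforward manipulations that are delicate in this setting --- while the paper's argument buys brevity by avoiding recollement altogether.
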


\begin{proof}
Lemma \ref{serre quotient by subcategory of sheaves supported on closed subset} tells that the functor 
\[
\Coh^{(G, \cL)}(\ol S)[-p(s)] \xrightarrow{j^*} \Coh^{(G, \cL)}(S)[-p(s)] = \cP^{(G, \cL)}_\coh(S)
\]
is the Serre quotient by the subcategory $\Coh^{(G, \cL)}_{\bold Z}(\ol S)[-p(s)]$. Lemma~\ref{functor J_!*} tells that the functor $J_{!*}$ factors through this quotient, i.e., it canonically decomposes as
\[
\Coh^{(G, \cL)}(\ol S)[-p(s)] \rightarrow \cP^{(G, \cL)}_\coh(S) \rightarrow \cP_{!*}(S).
\]
We denote this last functor by $j_{!*}: \cP^{(G, \cL)}_\coh(S) \rightarrow \cP_{!*}(S)$. By construction, $j^* \circ j_{!*} = \id_{S}$ canonically. We also have $j_{!*} \circ j^* = \id_{S}$ canonically, since $J_{!*}\vert_{\cP_{!*}(S)} = \id$. This finishes the proof.
\end{proof}

We also immediately obtain
\begin{lem}[\cite{AB10}, Lemma 4.4] \label{IC of restriction is subquotient}
For any $\cF \in \cP^{(G, \cL)}_{\coh}(\ol S)$, $j_{!*}(\cF\vert_{S})$ is a subquotient of $\cF$ in the abelian category $\cP^{(G, \cL)}_{\coh}(\ol S)$.
\end{lem}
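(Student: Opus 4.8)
The plan is to identify the abstractly-defined object $j_{!*}(\cF|_S)$ with the explicit double truncation $\tau^{-}_{\leq 0}\tau^{+}_{\geq 0}\cF$ applied directly to $\cF$, and then to read off a quotient and a subobject from the two truncation triangles, assembling them into a subquotient.

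First, I would note that the formula $J_{!*} = \tau^{-}_{\leq 0}\circ\tau^{+}_{\geq 0}$ makes sense on all of $D^{b,(G,\cL)}_{\coh}(\ol S)$ and that, by Lemma~\ref{functor J_!*} (which holds for $J_{!*}$ on the whole category), $J_{!*}(\cF)\in\cP_{!*}(S)$. Since $j$ is an open embedding, $j^!=j^*$, so Lemma~\ref{basic properties of perversity}(b), applied to the perversities $p$, $p^+$, $p^-$, shows that $j^*$ is t-exact for each of them and hence commutes with $\tau^{+}$ and $\tau^{-}$. On $S$ the three perversities agree and $\cF|_S$ lies in their common heart, so $j^*J_{!*}(\cF)=\tau^{-}_{\leq 0}\tau^{+}_{\geq 0}(\cF|_S)=\cF|_S$. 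As $j^*$ restricts to the equivalence $\cP_{!*}(S)\xrightarrow{\sim}\cP^{(G,\cL)}_{\coh}(S)$ with inverse $j_{!*}$, this gives $j_{!*}(\cF|_S)=\tau^{-}_{\leq 0}\tau^{+}_{\geq 0}\cF$.

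Next, I set $\cG:=\tau^{+}_{\geq 0}\cF$ and consider the truncation triangles $\tau^{+}_{\leq -1}\cF\to\cF\to\cG$ and $\tau^{-}_{\leq 0}\cG\to\cG\to\tau^{-}_{\geq 1}\cG$. The routine observations are: (i) $\tau^{+}_{\leq -1}\cF$ and $\tau^{-}_{\geq 1}\cG$ are supported on $\bold Z$, since $j^*$ commutes with the truncations and kills them because $\cF|_S$ sits in the heart; and (ii) from the defining arithmetic of $p^\pm$ one checks $\tau^{+}_{\leq -1}\cF\in D^{p,\leq 0}$ while $\cG,\tau^{-}_{\geq 1}\cG\in D^{p,\geq 0}$ (on $\bold Z$ this uses $p^+(x)-1=p(x)$ and $p^-(x)+1=p(x)$; at the generic point $s$ of $S$ anything supported on $\bold Z$ vanishes). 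The long exact sequences in $p$-perverse cohomology then collapse, using $\cF\in\cP^{(G,\cL)}_{\coh}(\ol S)$, to two short exact sequences in the abelian category $\cP^{(G,\cL)}_{\coh}(\ol S)$:
\[
0 \to H^0_p(\tau^{+}_{\leq -1}\cF) \to \cF \to H^0_p(\cG) \to 0, \qquad 0 \to \tau^{-}_{\leq 0}\cG \to H^0_p(\cG) \to H^0_p(\tau^{-}_{\geq 1}\cG) \to 0.
\]
The first exhibits $H^0_p(\cG)$ as a quotient of $\cF$, and the second exhibits $j_{!*}(\cF|_S)=\tau^{-}_{\leq 0}\cG$ as a subobject of $H^0_p(\cG)$; composing, $j_{!*}(\cF|_S)$ is a subquotient of $\cF$.

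The main obstacle is the identification in the first step: pinning down that the abstractly-defined $j_{!*}(\cF|_S)$ coincides with $\tau^{-}_{\leq 0}\tau^{+}_{\geq 0}\cF$ applied to the perverse \emph{complex} $\cF$ itself, rather than to some coherent-sheaf lift of $\cF|_S$. This hinges precisely on $J_{!*}$ being defined on all of $D^{b,(G,\cL)}_{\coh}(\ol S)$ and on the t-exactness of $j^*$. Once this is secured, the rest is bookkeeping of supports and cohomological degrees, entirely governed by the inclusions $D^{p^-,\leq 0}\subset D^{p,\leq 0}$ and $D^{p^+,\geq 0}\subset D^{p,\geq 0}$ coming from $p^-\leq p\leq p^+$; in particular no Noetherian or Artinian hypothesis on $\cP^{(G,\cL)}_{\coh}(\ol S)$ is needed.
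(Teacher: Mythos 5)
Your proposal is correct and follows essentially the same route as the paper, whose proof simply defers to \cite[Lemma~4.4]{AB10}: there, too, one identifies $j_{!*}(\cF\vert_S)$ with $\tau^{-}_{\leq 0}\tau^{+}_{\geq 0}\cF$ (using that $J_{!*}$ is defined on all of $D^{b,(G,\cL)}_{\coh}(\ol S)$ and that $j^*$ is t-exact for $p$, $p^+$, $p^-$) and then extracts the subquotient structure from the two truncation triangles via the containments $D^{p^+,\leq -1}\subset D^{p,\leq 0}$ and $D^{p^-,\geq 1}\subset D^{p,\geq 0}$. Your bookkeeping of supports and perverse cohomology is accurate, so nothing is missing.
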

\begin{proof}
In line with \cite[Lemma 4.4]{AB10}.
\end{proof}

Finally, if $h: S \rightarrow X$ is the locally closed embedding of an orbit, which decomposes as $S \xrightarrow{j} \ol S \xrightarrow{i} X$, we define
\[
\IC(S, -) = h_{!*} = i_* \circ j_{!*}.
\]

\subsection{Irreducible perverse coherent sheaves}
We keep assuming that the perversity $p$ is strictly monotone and strictly comonotone, as well as the condition \eqref{codim 2}. We prove the following

\begin{prop} \label{classification of simple perverse coherent sheaves}
$\cF$ is an irreducible object in $\cP^{(G, \cL)}_\coh(X)$ if and only if it has the form $\IC(S, V[-p(S)])$, where $S$ is a $(G, \cL)$-orbit, and $V$ is an irreducible coherent $(G, \cL)$-module on $S$.
\end{prop}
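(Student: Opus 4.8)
The plan is to prove both implications using the equivalence $j^{*}\colon\cP_{!*}(S)\xrightarrow{\sim}\cP^{(G,\cL)}_\coh(S)$ (with inverse $j_{!*}=\IC(S,-)$) established above, together with the $\Hom$-vanishing criteria of Lemma~\ref{equivalent conditions for being in p^+ and p^-} and the subquotient property of Lemma~\ref{IC of restriction is subquotient}. Throughout I write $\bold Z$ for the topological boundary of an orbit, use that the open restriction $j^{*}$ is t-exact (Lemma~\ref{basic properties of perversity}), and use that $i_{*}$ for a closed invariant embedding $\ol S\hookrightarrow X$ is t-exact and fully faithful with Serre image, so that irreducibility may be tested inside $\cP^{(G,\cL)}_\coh(\ol S)$.

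For the ``if'' direction, set $M:=j_{!*}(V[-p(S)])\in\cP_{!*}(S)=D^{p^-,\leq0}(\ol S)\cap D^{p^+,\geq0}(\ol S)$ with $V$ irreducible. By Lemma~\ref{equivalent conditions for being in p^+ and p^-}, the first membership gives $\Hom(M,\cG)=0$ and the second gives $\Hom(\cG,M)=0$ for every $\cG\in\cP^{(G,\cL)}_\coh(\ol S)$ supported on $\bold Z$. Now let $N\subseteq M$ be a subobject in $\cP^{(G,\cL)}_\coh(\ol S)$. Applying the exact functor $j^{*}$ gives $j^{*}N\subseteq j^{*}M=V[-p(S)]$, which is irreducible in $\cP^{(G,\cL)}_\coh(S)$; hence either $j^{*}N=0$ or $j^{*}(M/N)=0$. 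In the first case $N$ is supported on $\bold Z$ and the inclusion $N\hookrightarrow M$ must vanish, so $N=0$; in the second case $M/N$ is supported on $\bold Z$ and the surjection $M\twoheadrightarrow M/N$ must vanish, so $N=M$. Thus $M$ is irreducible in $\cP^{(G,\cL)}_\coh(\ol S)$, and applying $i_{*}$ shows $\IC(S,V[-p(S)])=i_{*}M$ is irreducible in $\cP^{(G,\cL)}_\coh(X)$.

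For the ``only if'' direction, let $\cF$ be irreducible and choose an orbit $S$ of maximal dimension inside $\supp\cF$; then $S$ is open in $\supp\cF$, and since $\supp\cF$ is closed and contains $S$ we also have $\ol S\subseteq\supp\cF$, so that $\bold Z':=\supp\cF\setminus S$ is closed with $\ol S\cap(X\setminus\bold Z')=S$. The largest subobject of $\cF$ supported on $\bold Z'$ is a proper subobject (its restriction to $S$ is $\cF|_{S}\neq0$), hence is zero by irreducibility; dually $\cF$ has no nonzero quotient supported on $\bold Z'$. Restricting to the open $U:=X\setminus\bold Z'$, the object $\cF|_{U}$ is supported on the closed orbit $S\subset U$, so by Lemma~\ref{sheaf with support on closed is pushforward from subscheme} it equals $i_{S*}(\cF|_{S})$ with $\cF|_{S}\in\cP^{(G,\cL)}_\coh(S)$ nonzero; pick an irreducible subobject $V[-p(S)]\subseteq\cF|_{S}$. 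Over $U$ one has $\IC(S,V[-p(S)])|_{U}\cong i_{S*}(V[-p(S)])\hookrightarrow\cF|_{U}$, and transporting this nonzero map across the adjunction $(j_{U}^{*},\,{}^{p}H^{0}j_{U*})$, together with the injectivity of the unit $\cF\hookrightarrow{}^{p}H^{0}j_{U*}(\cF|_{U})$ coming from the vanishing of subobjects supported on $\bold Z'$, produces a nonzero morphism $\IC(S,V[-p(S)])\to\cF$. As both sides are irreducible this is an isomorphism, whence $\cF\cong\IC(S,V[-p(S)])$; in particular $\supp\cF=\ol S$ and $\cF|_{S}=V[-p(S)]$ is irreducible, so $V$ is an irreducible coherent $(G,\cL)$-module on $S$.

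The step I expect to be the main obstacle is this last one: manufacturing a genuine global morphism $\IC(S,V[-p(S)])\to\cF$ out of an inclusion visible only over $U$. Rather than invoking the recollement formalism, one may instead first argue directly that $\supp\cF=\ol S$ — so that $\cF$ already lies in $\cP^{(G,\cL)}_\coh(\ol S)$ — and then apply Lemma~\ref{IC of restriction is subquotient}: $j_{!*}(\cF|_{S})$ is a subquotient of $\cF$, nonzero because its restriction to $S$ is $\cF|_{S}$, hence equal to $\cF$ by irreducibility; the equivalence $j^{*}\colon\cP_{!*}(S)\xrightarrow{\sim}\cP^{(G,\cL)}_\coh(S)$ then forces $\cF|_{S}$, and therefore $V$, to be irreducible. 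Either way the crux is controlling subobjects and quotients supported on the boundary $\bold Z'$, which is exactly what the strict (co)monotonicity of $p$ and Lemma~\ref{equivalent conditions for being in p^+ and p^-} are designed to deliver.
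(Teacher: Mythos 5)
Your skeleton is the paper's (Hom-vanishing via Lemma~\ref{equivalent conditions for being in p^+ and p^-}, the subquotient Lemma~\ref{IC of restriction is subquotient}, the equivalence $j^*\colon \cP_{!*}(S)\simeq \cP^{(G,\cL)}_\coh(S)$), but two steps as written are genuine gaps. The first is in your ``if'' direction, where you reduce to irreducibility in $\cP^{(G,\cL)}_\coh(\ol S)$ by asserting that $i_*$ is ``fully faithful with Serre image.'' In this coherent setting that is precisely what cannot be taken for granted: a perverse subobject $\cF'\subseteq i_*M$ in $\cP^{(G,\cL)}_\coh(X)$ is constrained only topologically, $\supp\cF'\subseteq(\ol S)^\tp$; only its ${}^pH^0$-level map is injective, so its cohomology sheaves need not be killed by the ideal of $\ol S$, and by Lemma~\ref{sheaf with support on closed is pushforward from subscheme} it is a pushforward from some invariant closed subscheme $Y$ with $Y^\tp\subseteq(\ol S)^\tp$ which may be a non-reduced thickening, not a subscheme of $\ol S$. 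The paper's proof is organized exactly to avoid your claim: it realizes $\cF'$ on such a $Y$ and splits into the cases $S\subseteq Y$ (restrict to $S$ by the t-exact $j^*$, use irreducibility of $V$ together with Lemma~\ref{IC of restriction is subquotient}) and $Y^\tp\subseteq \bold Z$ (use the Hom-vanishing of Lemma~\ref{equivalent conditions for being in p^+ and p^-}). Your two-case analysis after applying $j^*$ is fine once rerouted through this lemma instead of the unproven Serre-image claim.

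The second gap is in the ``only if'' direction. Your primary route via the adjunction $(j_U^*,\,{}^pH^0 j_{U*})$ does not stay inside the category carrying the t-structure: $Rj_{U*}$ of a complex with coherent cohomology need not have coherent cohomology, and the paper deliberately never uses derived open pushforward (its Lemma~\ref{serre quotient by subcategory of sheaves supported on closed subset} invokes the codimension-$2$ hypothesis precisely so that non-derived $j_*$ suffices). Your fallback is essentially the paper's argument, but it leaves its key first step unproved: ``one may instead first argue directly that $\supp\cF=\ol S$'' is asserted, not argued. The paper supplies it: for any closed invariant $Z$ avoiding the generic point $x$ of $\supp\cF$, irreducibility of $\cF$ kills all Homs to and from objects supported on $Z$, so Lemma~\ref{equivalent conditions for being in p^+ and p^-} gives $i_Z^*\cF\in D^{p_Z,<0}(Z)$ and $i_Z^!\cF\in D^{p_Z,>0}(Z)$; if $\supp\cF$ were reducible, taking $Z$ inside a component missing $x$ would force $i_Z^*\cF=i_Z^!\cF$, a contradiction. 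Moreover, even with irreducible topological support, ``$\cF$ already lies in $\cP^{(G,\cL)}_\coh(\ol S)$'' again glosses over the scheme structure: one needs Lemma~\ref{sheaf with support on closed is pushforward from subscheme} to realize $\cF$ on some invariant (possibly non-reduced) $Y$, finiteness of orbits to extract the open orbit $S\subseteq Y$, and then Lemma~\ref{IC of restriction is subquotient} applied on $Y$ --- which is exactly how the paper concludes. With these two repairs your proposal collapses onto the paper's proof rather than offering an alternative to it.
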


Note that due to Section~\ref{subsection differential galois group}, the category of coherent $(G, \cL)$-modules on $S$ is Tannakian. In particular, the set of its irreducible objects is the set of irreducible representations of some pro-algebraic group.

\begin{proof}
We first show that $\IC(S, V[-p(S)])$ is irreducible in $\cP^{(G, \cL)}_\coh(X)$. Suppose $\cF' \subset \IC(S, V[-p(S)])$. Then $\supp \cF' \subset (\ol S)^\tp$. By Lemma \ref{sheaf with support on closed is pushforward from subscheme}, there is a $(G, \cL)$-invariant subscheme $Y$, such that $Y^\tp \subset (\ol S)^\tp$ and $\cF$ is the direct image of some sheaf on $Y$. 
If $S \subset Y$, we have $\cF' \vert_S \subset V[-p(S)]$, which together with Lemma~\ref{IC of restriction is subquotient} implies that $\cF' = \IC(S, V[-p(S)])$, since $V$ is irreducible (recall that $\cP^{(G, \cL)}_\coh(S) = \Coh^{(G, \cL)}(S)[-p(S)]$). If $Y^\tp \subset (\ol S \setminus S)^\tp$, then Lemma~\ref{equivalent conditions for being in p^+ and p^-} implies the result.

Now we show the converse direction. Assume $\cF$ is some irreducible object of $\cP^{(G, \cL)}_\coh(X)$. Let $x$ be a generic point of $\supp \cF$. Let $i_Z: Z \hookrightarrow X$ be a closed $(G, \cL)$-invariant subscheme of $X$, not containing $x$. Since $\cF$ is irreducible, we have
\[
\Hom(\cF, i_{Z *} \cG) = 0, \qquad \Hom(i_{Z *} \cG, \cF) = 0
\]
for any $\cG \in \cP^{(G, \cL)}_\coh (Z)$. Lemma \ref{equivalent conditions for being in p^+ and p^-} thus implies that
\[
i_Z^* \cF \in D^{p_Z, < 0}(Z), \qquad i_Z^! \cF \in D^{p_Z, > 0}(Z).
\]
It follows that $\supp \cF$ is irreducible (otherwise we could take $Z$ to lie inside an irreducible component of $\supp \cF$ not containing $x$, and obtain a contradiction, as $i_Z^* \cF = i_Z^! \cF$ in this case).

By Lemma~\ref{sheaf with support on closed is pushforward from subscheme}, $\cF$ can be obtained as a direct image from a closed $(G, \cL)$-invariant subscheme $Y \hookrightarrow X$ with generic point $x$. By finiteness of orbits, there is an open $(G, \cL)$-invariant $j : S \subset Y$. Lemma \ref{IC of restriction is subquotient} then implies that $\cF = \IC(S, V[-p(S)])$, where $V[-p(S)] = j^* \cF$. It is clear that $V$ is irreducible in this case, as required.
\end{proof}

\begin{cor} \label{perverse coherent sheaves are artinian and noetherian}
The category of perverse coherent sheaves is Artinian and Noetherian. Classes of irreducible perverse coherent sheaves form a basis of the K-group $K^{(G, \cL)}(X)$.
\end{cor}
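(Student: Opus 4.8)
The statement bundles two assertions: that $\cP^{(G,\cL)}_\coh(X)$ is Artinian and Noetherian, and that the classes of its simple objects form a $\bZ$-basis of $K^{(G,\cL)}(X)$. The second assertion I would deduce formally from the first together with the classification of Proposition~\ref{classification of simple perverse coherent sheaves}. Indeed, once the heart is a finite-length abelian category, $K_0$ of the heart is the free abelian group on the set of isomorphism classes of its simple objects (Jordan--H\"older), and for the bounded perverse $t$-structure the Euler-characteristic map $[\cF] \mapsto \sum_i (-1)^i [{}^{p}\cH^i\cF]$ identifies $K_0$ of the heart with $K_0(D^{b,(G,\cL)}_\coh(X)) = K^{(G,\cL)}(X)$, the last equality being Lemma~\ref{K-theory of coh vs qcoh}. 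So the entire content is the finiteness statement.

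For finiteness the plan is an induction on the number of $(G,\cL)$-orbits meeting $\supp\cF$, run through a recollement along an open--closed decomposition. The base input is that on a single orbit $S$ every object of $\Coh^{(G,\cL)}(S)$ is locally free of finite rank (Lemma~\ref{coherent module is locally free}) and lives in the Tannakian category of Section~\ref{subsection differential galois group}, so it has finite length bounded by its rank; hence $\cP^{(G,\cL)}_\coh$ on a disjoint union of orbits is of finite length. Now let $V\subseteq X$ be the open union of the open orbits, with open embedding $j\colon V\hookrightarrow X$ and closed complement $i\colon Z\hookrightarrow X$ (which has strictly fewer orbits). By Lemma~\ref{basic properties of perversity}, $j^*=j^!$ is $t$-exact and $i_*$ is $t$-exact, so both restrict to exact functors of hearts. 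Given an ascending chain $\cF_1\subseteq\cF_2\subseteq\cdots\subseteq\cF$, the restricted chain $j^*\cF_k$ stabilizes in the finite-length category $\cP^{(G,\cL)}_\coh(V)$ at some $k_0$, and for $k\ge k_0$ exactness of $j^*$ gives $j^*(\cF_k/\cF_{k_0})=0$, so each $\cF_k/\cF_{k_0}$ is supported on $Z$. Running the same bookkeeping on a descending chain would give the Artinian property directly; alternatively, since the hypotheses ``strictly monotone and strictly comonotone'' are symmetric under $p\mapsto\ol p$, the duality functor $\bD$ (the identity $\bD(D^{p,\le 0})=D^{\ol p,\ge 0}$ of Lemma~\ref{basic properties of perversity}) is an anti-equivalence between the $p$- and $\ol p$-perverse hearts that interchanges ascending and descending chains, so Artinian follows from Noetherian for all admissible perversities.

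The step I expect to be the main obstacle is passing from ``the restriction to $V$ is eventually constant'' to ``the chain stabilizes'': the subobjects $\cF_k/\cF_{k_0}$ are supported on $Z$ but sit inside $\cF/\cF_{k_0}$, which is not, so one cannot directly invoke the inductive hypothesis on a fixed object of $\cP^{(G,\cL)}_\coh(Z)$. To resolve this I would construct the maximal subobject of $\cG:=\cF/\cF_{k_0}$ supported on $Z$, namely ${}^{p}\cH^0(i_*i^!\cG)$, using that $i^!$ preserves coherence and carries the heart into $D^{p_Z,\ge 0}(Z)$ (Lemma~\ref{basic properties of perversity}); by the adjunction $i_*\dashv i^!$ every $Z$-supported subobject of $\cG$ factors through it, so the whole tail of the chain lies in this one subobject. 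By Lemma~\ref{sheaf with support on closed is pushforward from subscheme} this subobject is $i_*$ of an object of $\cP^{(G,\cL)}_\coh$ on a closed invariant subscheme with strictly fewer orbits, and the inductive hypothesis then forces the chain to stabilize. As elsewhere in this section, the codimension-$2$ hypothesis~\eqref{codim 2} together with Lemma~\ref{serre quotient by subcategory of sheaves supported on closed subset} enters to keep the relevant non-derived pushforwards coherent.
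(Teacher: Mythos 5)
Your overall architecture is exactly the one the paper intends: its proof is literally a citation to \cite{AB10}, Corollaries 4.13 and 4.14, run by induction on the number of orbits, and your reduction of the $K$-group statement to finite length (Jordan--H\"older for the heart, the Euler-characteristic identification of $K_0$ of the heart with $K_0$ of the bounded derived category, and Lemma~\ref{K-theory of coh vs qcoh}), the stratum-wise finiteness via the Tannakian fiber functor of Section~\ref{subsection differential galois group}, the $t$-exactness of $j^*$ and $i_*$ from Lemma~\ref{basic properties of perversity}, and the duality trick exchanging Noetherian and Artinian for $p$ and $\ol p$ all match that blueprint. However, the step you yourself flag as the main obstacle is resolved incorrectly as written.

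The claim that every $\bold Z$-supported subobject of $\cG$ factors through ${}^{p}\cH^0(i_*i^!\cG)$ for the \emph{fixed} closed complement $i\colon Z\hookrightarrow X$ is false in general. By Lemma~\ref{sheaf with support on closed is pushforward from subscheme}, such a subobject is $i_{Z'*}$ of an object on \emph{some} invariant thickening $Z'$ of $Z$, and the map it canonically factors through is the counit $i_{Z'*}i_{Z'}^!\cG \to \cG$ for \emph{that} $Z'$; when $Z' \supsetneq Z$ there is in general no factorization through $i_{Z*}i_Z^!\cG$, since $i_Z^! = R\shhom(\cO_Z,-)$ only sees sections killed by the ideal of $Z$ itself, while the subobject may only be annihilated by a higher power. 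This is precisely why the paper reproduces the colimit formula $\Hom(\cF,\bi_*\bi^!(\cG))=\varinjlim_Z \Hom(\cF,i_{Z*}i_Z^!(\cG))$ over all scheme structures on $\bold Z$ (the analogue of \cite[Lemma~2.22]{AB10}): the ``maximal supported subobject'' must be assembled from the directed system of images of ${}^{p}\cH^0(i_{Z_n*}i_{Z_n}^!\cG)$, and one must then argue that this increasing chain of coherent subobjects of $\cG$ stabilizes --- which is not free in the algebroid setting, where (as the paper stresses) quasi-coherent equivariant sheaves need not be unions of coherent equivariant subsheaves. The same issue undercuts your final sentence: once the tail is trapped inside $i_{W*}B$ with $B \in \cP^{(G,\cL)}_\coh(W)$, subobjects of $i_{W*}B$ formed in $\cP^{(G,\cL)}_\coh(X)$ need not come from $\cP^{(G,\cL)}_\coh(W)$, because $i_{W*}$ is $t$-exact and faithful but not fully faithful at the derived level ($Li_W^*\, i_{W*} \neq \id$ for coherent pushforwards, unlike the constructible setting); the chain can climb thickenings $W_n$, and Noetherianity of each $\cP^{(G,\cL)}_\coh(W_n)$ separately does not bound it. So the induction has to be organized over all invariant scheme structures on the closed orbit closures, with the colimit lemma doing the gluing, as in \cite{AB10}; with that repair your argument goes through and coincides with the cited proof.
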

\begin{proof}
Same as in \cite[Corollary 4.13, Corollary 4.14]{AB10}, using induction on the number of orbits.
\end{proof}

To sum up, here is the main theorem of this section, combined from Theorem \ref{conditions for perverse t-structure define t-structure}, Proposition \ref{classification of simple perverse coherent sheaves} and Corollary \ref{perverse coherent sheaves are artinian and noetherian}.

\begin{thm} \label{main theorem on perverse coherent sheaves}
Suppose $X$ is a finite type scheme and $(G, \cL)$ is a Harish-Chandra Lie algebroid, acting on $X$ with finite number of orbits; dimensions of adjacent orbits differ at least by 2; there is a dualizing object in $D_\coh^{b, (G, \cL)} X$; $p$ is a strictly monotone and strictly comonotone perversity on $(X / \cL)^\tp$.

Then formulae \eqref{definition of D^+ and D^-} define the t-structure on the category $D_\coh^{b, (G, \cL)} X$. Its heart, denoted $\cP^{(G, \cL)}_{\coh} (X)$, and called the category of \textit{perverse coherent sheaves}, is an abelian Artinian Noetherian category. 

Isomorphism classes of simple objects in this category are in correspondence with pairs $(S, V)$, where $S$ is a $(G, \cL)$-orbit and $V$ is a $(G, \cL)$-equivariant vector bundle on $S$. Their classes form a basis of $K^{(G, \cL)}(X)$.
\end{thm}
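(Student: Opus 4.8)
The plan is to assemble the statement from the three results already proved in this section, verifying that the hypotheses listed in the theorem match those needed for each ingredient, and reconciling the two formulations of ``simple object''. This is a collation rather than a new argument: essentially all the mathematical content lives in the cited lemmata, so the work is bookkeeping.

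First I would deduce the t-structure. The theorem assumes $p$ is strictly monotone and strictly comonotone, which in particular makes it monotone and comonotone, so Lemma~\ref{there is distinguished triangle with perverse truncation} supplies the truncation triangle; combined with the orthogonality of Lemma~\ref{Homs are zero between p+ and p-} and the duality compatibility of Lemma~\ref{basic properties of perversity}\eqref{basic:duality}, this is exactly the content of Theorem~\ref{conditions for perverse t-structure define t-structure}, giving that \eqref{definition of D^+ and D^-} defines a t-structure on $D^{b,(G,\cL)}_{\coh}(X)$ with heart $\cP^{(G,\cL)}_{\coh}(X)$. Here I would note that the standing hypothesis of an equivariant dualizing object $\om_X$ --- needed for Lemma~\ref{basic properties of perversity}\eqref{basic:duality} and hence for the whole subsection --- is granted among the assumptions of the theorem. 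Abelianness of the heart is automatic for any t-structure.

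Next I would classify the simples. The finiteness of orbits together with the adjacent-dimension gap yields the codimension condition \eqref{codim 2}, so the standing assumptions of the IC-extension subsection hold and Proposition~\ref{classification of simple perverse coherent sheaves} applies: the simple objects are precisely $\IC(S, V[-p(S)])$, where $S$ runs over $(G,\cL)$-orbits and $V$ over irreducible coherent $(G,\cL)$-modules on $S$. To match the phrasing of the theorem I would then invoke that on an orbit $S$ the restricted algebroid $i_S^*\cL$ is transitive, so by Lemma~\ref{coherent module is locally free} every coherent $(G,\cL)$-module on $S$ is locally free, i.e.\ a $(G,\cL)$-equivariant vector bundle; this identification preserves irreducibility, so the parametrizing data agree. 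That the assignment $(S,V)\mapsto \IC(S,V[-p(S)])$ is injective on isomorphism classes follows from $j^*\circ j_{!*}=\id$, since applying $j^*$ to the IC-extension recovers $V[-p(S)]$ and the orbit $S$ is recovered as the (generic point of the) support.

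Finally, for the finiteness and $K$-theory assertions I would cite Corollary~\ref{perverse coherent sheaves are artinian and noetherian}, whose proof runs by induction on the finite number of orbits; the basis statement then combines the existence of finite composition series (whose factors are the IC-simples) with the identification $K_0(\cP^{(G,\cL)}_{\coh}(X)) \simeq K^{(G,\cL)}(X)$ coming from Lemma~\ref{K-theory of coh vs qcoh}, linear independence being seen from the filtration by support. The only genuinely non-formal points --- and thus where I would be most careful --- are extracting \eqref{codim 2} from the adjacent-dimension hypothesis and the transitivity-plus-Lemma~\ref{coherent module is locally free} identification of irreducible coherent modules on an orbit with irreducible equivariant vector bundles; everything else is a direct appeal to the results established above.
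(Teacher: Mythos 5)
Your proposal is correct and follows essentially the same route as the paper, which states Theorem~\ref{main theorem on perverse coherent sheaves} precisely as the combination of Theorem~\ref{conditions for perverse t-structure define t-structure}, Proposition~\ref{classification of simple perverse coherent sheaves}, and Corollary~\ref{perverse coherent sheaves are artinian and noetherian} (with the $K$-group identification from Lemma~\ref{K-theory of coh vs qcoh}), offering no further argument. Your explicit reconciliation of irreducible coherent $(G,\cL)$-modules on an orbit with equivariant vector bundles via transitivity and Lemma~\ref{coherent module is locally free} is exactly the bookkeeping the paper leaves implicit.
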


\begin{example}
Suppose all orbits of $(G, \cL)$ have even dimension. Then $p(S) = \ol p (S) = - \frac{1}{2} \dim S$ is strictly monotone and strictly comonotone. It is called \textit{the middle perversity}. By Lemma~\ref{basic properties of perversity}\ref{basic:duality}, the category of perverse coherent $(G, \cL)$-equivariant sheaves is preserved by duality $\bD$ in this case.
\end{example}



\section{Perverse coherent sheaves for conical symplectic singularities} \label{Section: symplectic singularities}

\subsection{Symplectic singularities} \label{subsec: symplectic singularities}
Let $X$ be a conical symplectic singularity (not necessarily admitting a symplectic resolution). 
In particular, $X = \Spec A$ is an affine Poisson variety, with Poisson bracket of some degree ${\bold d} > 0$. This means that $A = \bigoplus_{i \geq 0} A_i$ is graded, $A_0 = \bC$, and $\{A_i, A_j\} \subset A_{i + j - \bold d}$.
We denote by $\torus_\hbar$ the contracting torus, and by $0 \in X$ the attracting point of the $\torus_\hbar$ conical action.
We denote by $G$ the group of Poisson $\bC^\times_\hbar$-equivariant automorphisms, and set $\g$ to be its Lie algebra.

As in \cite{Kam22}, we assume that for all $0 < i < \bold d$ holds $A_i = 0$. This guarantees that $A_{> 0}$ is a Poisson ideal, and hence $\{0\} \subset X$ is a symplectic leaf. 

\begin{rem}
This last assumption holds for the majority of examples. It does not hold for $S^n \bA^2$, but holds for $(S^n \bA^2)'$ --- the space of $n$ points on $\bA^2$ with sum 0 
(so one has $S^n \bA^2 = (S^n \bA^2)' \times \bA^2$). This illustrates what kind of assumption this is.
\end{rem}




Let us define the Lie algebra $\fl = (A_{\geq \bold d}, \{\cdot, \cdot\})$ --- the vector space $A_{\geq \bold d}$ with Lie bracket being the Poisson bracket (we introduce a separate notation to distinguish it from the commutative algebra $A$).  It is a non-negatively graded Lie algebra by letting $\fl_i = A_{i + \bold d}$ for $i \geq 0$. $\fl_0 = A_{\bold d}$ is a Lie subalgebra, and $\fl_{\geq j}$ is a Lie algebra ideal for any $j \geq 1$. 




\begin{lem} \label{A_d = g}
One has an isomorphism of Lie algebras $\fl_0 \simeq \g$, compatible with the actions of these Lie algebras on $X$.
\end{lem}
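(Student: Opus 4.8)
The plan is to exhibit the isomorphism explicitly as the \textbf{Hamiltonian map}
\[
\phi\colon \fl_0 = A_{\bold d}\longrightarrow \g,\qquad a\longmapsto H_a:=\{a,-\},
\]
and to prove it is a bijection intertwining the two actions on $X$.

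\textbf{Homomorphism and compatibility.} First I would check $\phi$ is well defined with values in $\g$. For $a\in A_{\bold d}$ the operator $H_a=\{a,-\}$ is a derivation of $A$ by the Leibniz rule, a Poisson derivation by the Jacobi identity, and of degree $0$ since $\{A_{\bold d},A_j\}\subset A_j$. As $\g=\Lie G$ is, by the functor-of-points description of the automorphism group, exactly the Lie algebra of degree-$0$ Poisson derivations of $A$, we get $H_a\in\g$. The Jacobi identity also gives $[H_a,H_b]=H_{\{a,b\}}$, so $\phi$ is a homomorphism for the Poisson bracket on $\fl_0$. Compatibility with the actions on $X$ is then tautological: $\g$ acts by its defining vector fields, $\fl_0$ acts by Hamiltonian vector fields, and $\phi(a)=H_a$ is literally the same derivation in both descriptions.

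\textbf{Injectivity.} If $H_a=0$ then $a$ lies in the Poisson center. On the smooth symplectic locus $X^{\mathrm{reg}}$ (whose complement has codimension $\geq 2$) nondegeneracy of $\om$ forces $da=0$, so $a$ is constant on the connected variety $X^{\mathrm{reg}}$; since $\deg a=\bold d>0$ and $A_0=\bC$, we conclude $a=0$.

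\textbf{Surjectivity (the main point).} Let $D\in\g$ and view it on $X^{\mathrm{reg}}$ as a symplectic vector field, so $L_D\om=0$ and $\iota_D\om$ is closed. The conical structure makes $\om$ exact with a canonical primitive: writing $\mathrm{eu}$ for the Euler vector field, one has $L_{\mathrm{eu}}\om=\bold d\cdot\om$, and Cartan's formula yields $\om=d\theta$ with $\theta=\tfrac1{\bold d}\iota_{\mathrm{eu}}\om$. Because $D$ has degree $0$ we have $[\mathrm{eu},D]=0$, hence $L_D\theta=\tfrac1{\bold d}\iota_{[D,\mathrm{eu}]}\om=0$, and therefore $\iota_D\om=L_D\theta-d\iota_D\theta=-d(\iota_D\theta)$. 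Thus $a:=-\iota_D\theta$ satisfies $da=\iota_D\om$, i.e.\ $D=\phi(\pm a)$ for the appropriate sign, on $X^{\mathrm{reg}}$. A priori $a$ is only regular on $X^{\mathrm{reg}}$, but $X$ is normal with $X\setminus X^{\mathrm{reg}}$ of codimension $\geq 2$, so $a$ extends to a regular function on $X$ by Hartogs; a degree count ($\deg\om=\bold d$, $\deg\mathrm{eu}=\deg D=0$) places $a\in A_{\bold d}=\fl_0$. I expect this step to be the genuine obstacle: the content is that every degree-$0$ Poisson derivation is Hamiltonian, and the crux is the exactness of $\om$ via the Euler field together with the extension of the primitive across the singular locus — the vanishing $[\mathrm{eu},D]=0$ forced by degree $0$ being precisely what annihilates the potential $H^1$-obstruction.
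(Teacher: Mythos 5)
Your proof is correct, but on the decisive step it takes a genuinely different route from the paper. The paper splits the argument the same way you do (identify $\g$ with the graded Poisson derivations of $A$; show every such derivation is Hamiltonian; kill the kernel via the Poisson center), but for the middle step it simply cites a nontrivial general theorem of Losev (\cite[Proposition 2.14]{Los16}, with an analytic alternative in \cite[Lemma~2.9]{ES20}) asserting that \emph{all} Poisson derivations of $A$ are Hamiltonian. You instead give a self-contained elementary proof for the degree-$0$ case, which is all the lemma needs: exactness of $\om$ on $X^{\mathrm{reg}}$ via the Euler field ($L_{\mathrm{eu}}\om = {\bold d}\,\om$, so $\om = d\theta$ with $\theta = \tfrac{1}{\bold d}\iota_{\mathrm{eu}}\om$), the vanishing $L_D\theta = 0$ forced by $[\mathrm{eu},D]=0$, and Hartogs extension of the primitive $a=-\iota_D\theta$ across the singular locus using normality and $\operatorname{codim}\geq 2$; your degree count correctly places $a$ in $A_{\bold d}=\fl_0$. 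Your argument buys independence from the cited literature and makes transparent where conicality enters (it is exactly what kills the $H^1$-obstruction, and the same homotopy works for any degree $\neq -{\bold d}$); the paper's citation buys brevity and covers Poisson derivations of all degrees, which is genuinely stronger than what the Euler trick gives. The one place you are lighter than the paper: you assert via ``functor of points'' that $\Lie G$ is the Lie algebra of degree-$0$ Poisson derivations, whereas the paper proves this by realizing $G$ as the stabilizer of the multiplication and bracket tensors inside $\prod_i GL(A_i)$ and invoking Humphreys' argument; this identification is standard but is the only unargued input in your proposal, so it deserves at least the paper's one-paragraph justification. Your injectivity argument (Poisson center is $A_0$, via nondegeneracy on the open leaf) coincides with the paper's.
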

\begin{proof}
Let $m \geq d$ be such that $A$ is generated by $A_{\leq m}$ as algebra. Then $G$ is the algebraic subgroup of $\prod_{i = 1}^{2m} GL(A_i)$, which preserves the multiplication tensors (elements of $A_i^* \otimes A_j^* \otimes A_{i + j}$ for all $i, j \leq m$) and the Poisson bracket tensors (elements of $A_i^* \otimes A_j^* \otimes A_{i + j - d}$ for all $i, j \leq m$). 
Then $\Lie G = \g$ is the Lie algebra of all graded endomorphisms of $A_{\leq 2m}$, which annihilate these tensors --- that is, the Lie algebra of all graded Poisson derivations of $A$ (the proof of this claim is analogous to \cite[Section 10.7, Corollary]{Hum12}).

By \cite[Proposition 2.14]{Los16}, all Poisson derivations of $A$ are Hamiltonian (see \cite[Lemma 2.9]{ES20} for an alternative analytic proof). By definition, the Lie algebra $\fl_0$ acts on $X$ by graded Hamiltonian derivations and any Hamiltonian derivation is of this form. That is, we have a surjective homomorphism of Lie algebras $\fl_0 \rightarrow \g$.



The kernel of this homomorphism is the intersection of the Poisson center of $A$ with $A_{\bold d}$. Hence, its injectivity would follow if we show that the Poisson center of $A$ is $A_0$.

Indeed, suppose $f \in A$ is in the Poisson center. On the open symplectic leaf of $X$, the Poisson bivector gives a nondegenerate pairing of cotangent and tangent bundles, hence $df = 0$ on the open leaf. Thus, $f$ is constant on the open dense leaf, and thus constant on all~$X$. 
\end{proof}

Since $\{0\}$ is a symplectic leaf, $T^*_0 X$ is naturally a Lie algebra.
\begin{cor} \label{cotangent space at 0 to g}
$T^*_0 X$ is a nonnegatively graded finite-dimensional Lie algebra. Its zeroth graded component is isomorphic to $\g$, so there is a surjection $T^*_0 X \twoheadrightarrow \g$ with nilpotent kernel; in particular, one has an isomorphism of reductive parts $(T^*_0 X)^\red \simeq \g^\red$.
\end{cor}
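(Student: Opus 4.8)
The plan is to read everything off the $\torus_\hbar$-grading of the coordinate ring $A$, so that the only substantive inputs are Lemma~\ref{A_d = g} and one standard fact about nilradicals. Let $\fm = A_{>0} = A_{\geq \bold d}$ be the maximal ideal at $0$; then $T^*_0 X = \fm/\fm^2$ is finite-dimensional since $X$ is of finite type, and it inherits the grading. The Poisson bracket has degree $-\bold d$ and preserves $\fm$ (as $\fm$ is a Poisson ideal), so it descends to the Lie bracket on $\fm/\fm^2$ already recorded before the statement. I would reindex the grading by placing the image of $A_i$ in degree $i - \bold d$; with this convention the bracket is degree-preserving and $T^*_0 X = \bigoplus_{n \geq 0}(T^*_0 X)_n$ becomes a nonnegatively graded Lie algebra, compatibly with the grading $\fl_i = A_{i+\bold d}$ on $\fl$.

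Next I would identify the bottom piece and extract the surjection. Since $\fm^2 \subseteq A_{\geq 2\bold d}$, the quotient $\fm/\fm^2$ coincides with $A_{\bold d}$ in degree $\bold d$; hence $(T^*_0 X)_0 = A_{\bold d} = \fl_0$ as Lie algebras (the induced bracket there is literally the Poisson bracket), and Lemma~\ref{A_d = g} gives $(T^*_0 X)_0 \simeq \g$. The positive part $\fn := \bigoplus_{n>0}(T^*_0 X)_n$ is a graded ideal with $T^*_0 X/\fn \simeq (T^*_0 X)_0 \simeq \g$, yielding a surjection $\pi : T^*_0 X \twoheadrightarrow \g$. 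Its kernel $\fn$ is finite-dimensional and concentrated in strictly positive degrees, so its lower central series lands in ever higher degrees and terminates; thus $\fn$ is nilpotent.

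The remaining and hardest point is the comparison of reductive parts. Interpreting $(-)^\red$ as the maximal reductive quotient $(-)/\operatorname{nilrad}(-)$, it suffices to prove $\operatorname{nilrad}(T^*_0 X) = \pi^{-1}(\operatorname{nilrad}(\g))$, since then $(T^*_0 X)^\red \simeq \g/\operatorname{nilrad}(\g) = \g^\red$ by passing to quotients. The inclusion $\subseteq$ is automatic, because $\pi(\operatorname{nilrad}(T^*_0 X))$ is a nilpotent ideal of $\g$. For $\supseteq$ I must check that the ideal $\mathfrak{k} := \pi^{-1}(\operatorname{nilrad}(\g)) = \fn \oplus \operatorname{nilrad}(\g)$ is nilpotent. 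By Engel's theorem this reduces to showing $\ad(x)$ is nilpotent on $T^*_0 X$ for each $x \in \mathfrak{k}$; writing $x = x_0 + x_+$ with $x_0 \in \operatorname{nilrad}(\g)$ and $x_+ \in \fn$, the summand $\ad(x_+)$ strictly raises degree and so is nilpotent, while $\ad(x_0)$ preserves degree and acts on each finite-dimensional $\g$-module $(T^*_0 X)_n$ by a nilpotent operator. This last claim is the standard consequence of Lie's theorem that $\operatorname{nilrad}(\g) = [\g, \operatorname{rad}(\g)]$ acts nilpotently in every finite-dimensional representation. Since on the associated graded of the degree filtration $\ad(x)$ equals $\ad(x_0)$, which is nilpotent, the operator $\ad(x)$ is itself nilpotent.

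I expect the main obstacle to be precisely this nilpotency of $\pi^{-1}(\operatorname{nilrad}(\g))$; the rest is grading bookkeeping. The decisive input is that $\operatorname{nilrad}(\g)$ acts nilpotently on each graded piece $(T^*_0 X)_n$, which is what allows the degree-preserving part $\ad(x_0)$ and the degree-raising part $\ad(x_+)$ to be combined into a single nilpotent operator via the filtration argument.
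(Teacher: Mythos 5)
Your first two paragraphs are correct and are essentially the paper's entire proof: the paper simply notes that the assumption $A_i=0$ for $0<i<{\bold d}$ gives $T^*_0X=A_{\geq \bold d}/(A_{\geq\bold d})^2$, reads off the grading with degree-zero piece $\fl_0\simeq\g$ from Lemma~\ref{A_d = g}, and treats the rest as immediate. The genuine gap is in your third paragraph. The identity $\operatorname{nilrad}(\g)=[\g,\operatorname{rad}(\g)]$ is false: $[\g,\operatorname{rad}(\g)]$ (Bourbaki's nilpotent radical, which indeed acts nilpotently in every finite-dimensional representation) is in general strictly smaller than the maximal nilpotent ideal $\operatorname{nilrad}(\g)$, which contains the center of $\g$ and does not act nilpotently in representations. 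Consequently your key step --- that $\ad(x_0)$ is nilpotent on each $(T^*_0X)_n$ for $x_0\in\operatorname{nilrad}(\g)$ --- fails, and $\pi^{-1}(\operatorname{nilrad}(\g))$ need not be nilpotent. This is not a hypothetical defect: for the Kleinian singularity of type $A_n$, $n\geq 2$, one has $\g=\bC x$ (a one-dimensional torus) and $T^*_0X=\bC x\ltimes(\bC y\oplus\bC z)$ with $[x,y]=-(n+1)y$, $[x,z]=(n+1)z$; here $\operatorname{nilrad}(\g)=\g$, the preimage $\pi^{-1}(\operatorname{nilrad}(\g))=T^*_0X$ is not nilpotent, and with your definition $(T^*_0X)/\operatorname{nilrad}(T^*_0X)\simeq\bC$ while $\g/\operatorname{nilrad}(\g)=0$, so under your reading the corollary itself would be false. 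That is the signal that the definition, not just the argument, is off.

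The intended $(-)^\red$ is the reductive part in the algebraic-group sense, i.e.\ the \emph{maximal} reductive quotient, under which central tori survive; this is how the paper uses it later (e.g.\ $\g_e^\red\simeq\g_e/\g_e^{\mathrm u}$ in the proof of Theorem~\ref{perverse bases for nilcone are the same}, and $(\exp\g)^\red\times\torus_\hbar$ in Theorem~\ref{thm: on simple modules on a symplectic leaf}). Quotienting by the maximal nilpotent ideal yields \emph{a} reductive quotient but kills abelian directions, hence is not the maximal one. With the correct reading, the reason the kernel $\fn=(T^*_0X)_{>0}$ does not affect the reductive part is the \emph{grading}, not any nilpotent action of $\operatorname{nilrad}(\g)$: $\fn$ consists of positive-weight vectors for $\torus_\hbar$, so it acts nilpotently on every finite-dimensional graded module and lies in the unipotent radical of the (pro-)algebraic group attached to the HC pair $(\torus_\hbar,T^*_0X)$, whence $(T^*_0X)^\red\simeq\g^\red$. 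Note also that "surjection with nilpotent kernel" alone would not suffice for this conclusion about maximal reductive quotients (a central one-dimensional kernel would survive), so the grading is genuinely used; your proof discards it at exactly the step where it is needed.
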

\begin{proof}
By our assumption, $A_{> 0} = A_{\geq \bold d}$. Hence $T^*_0 X = A_{\geq \bold d} / (A_{\geq \bold d})^2$, and the claim follows from Lemma \ref{A_d = g}.
\end{proof}

In almost all interesting examples, $\g = \g^\red$, though it is not always the case; see the discussion after Theorem 3.15 in \cite{ES20}.

\subsection{Definition of the category} \label{subsec: definition of the category}
Consider the cotangent Poisson Lie algebroid $\Om_X$ (see Example~\ref{example of lie algeboroids}~\ref{example: cotangent algebroid}), determined by the Poisson structure on $X$. 
The embedding of Lie algebras $\g \rightarrow \fl$ of Lemma~\ref{A_d = g} produces a homomorphism $\cO_X \T \g \rightarrow \Omega_X$ of Lie algebroids on $X$, which is given by $f \T g \mapsto fdg$ (here $g \in A_{\bold d}$) under identification of Lemma~\ref{A_d = g}. It can be seen as a comoment map for the Hamiltonian action of $G$ on $X$. It follows that one can consider the Harish-Chandra pair $(G, \Omega_X)$.

Recall the contracting $\torus_\hbar$-action on $X$. Denote $\Lie \torus_\hbar = \bC \hbar$, for a formal variable $\hbar$. $\torus_\hbar$-action induces the $\bC \hbar$-action on $X$, and we can form the Lie algebroid $\Omega_X \oplus \cO_X \hbar$ (here $\cO_X \hbar$ means a trivial rank 1 sheaf, whose sections we denote $f \hbar$, $f \in A$). The Lie bracket on it is given by
\[
[\hbar, dg] = (i - {\bold d})dg
\]
for $g \in A_i$.
Then we naturally have HC-pairs $(\torus_\hbar, \Omega_X \oplus \cO_X \hbar)$ and $(G \times \torus_\hbar, \Omega_X \oplus \cO_X \hbar)$ on~$X$ (this construction of ``enlarging'' Lie algebroid by adding a Lie algebra is the same as in \cite[Lemma~1.8.6]{BB93}). Sometimes, abusing notation, we denote these HC pairs by $(\torus_\hbar, \Om_X)$ and $(G \times \torus_\hbar, \Om_X)$.

By definition, the orbits of $\Omega_X$ are the symplectic leaves of $X$. It is proved in \cite[Section~3]{Kal06} that $X$ has a finite number of those. Any symplectic leaf is obviously even-dimensional (because it is symplectic).

By \cite[Proposition 1.3]{Bea00}, $X$ is Gorenstein. Hence, $\cO_X[\frac{1}{2} \dim X]$ is a dualizing object in $D^b_\coh(X)$. It obviously has an equivariant structure over any HC Lie algebroid, and hence by Lemma \ref{dualizing object iff Forg is dualizing}, derived category of coherent modules over any HC Lie algebroid on $X$ admits a dualizing object.

The two paragraphs above guarantee the validity of the following definition, appealing to the construction of Section \ref{sec: perverse coherent modules over HC Lie algebroids}.

\begin{definition} \label{def: category for sympl sing}
Let $(G, \cL)$ be any choice of Harish-Chandra pair from 
\[
\Omega_X, \qquad (G, \Omega_X), \qquad (\torus_\hbar, \Omega_X \oplus \cO_X \hbar), \qquad (G \times \torus_\hbar, \Omega_X \oplus \cO_X \hbar).
\]
Define the middle perversity by $p(S) = - \frac{1}{2} \dim S$ for any leaf $S$.

We call $\cP^{(G, \cL)}_\coh(X)$ as the category of {\bf perverse coherent} (or perverse Poisson) {\bf sheaves of middle perversity on the symplectic singularity} $X$.
\end{definition}

Note that the categories $\cP^{\Om_X}_\coh(X)$, $\cP^{\torus_\hbar, \Om_X}_\coh(X)$ make sense for not necessarily conical symplectic singularities, and, more generally, for any Poisson varieties with a finite number of symplectic leaves.

Theorem \ref{main theorem on perverse coherent sheaves} applies to any choice of $\cL$ as above. In particular, we have a classification of simples in this category as IC-extensions of simples on a leaf. In the following subsection, we study in more detail what these simples are.

\subsection{On simple modules on a symplectic leaf}
In this subsection, we work with the HC pair $(\bC^\times_\hbar, \Om_X \oplus \bC \hbar)$ (so we consider graded Poisson sheaves), which we denote simply by $(\torus_\hbar, \Om_X)$. Let $\rho$ be its anchor map. Take a symplectic leaf $S$. We are interested in simple $(\left. \torus_\hbar, \Om_X \right|_S)$-modules on $S$. Equivalently, we are interested in the reductive part of the differential Galois group $\Gal^{(\torus_\hbar, \Om_X \vert_S)}(S)$ (see Section~\ref{subsection differential galois group}). Let us introduce some notation.

For a Lie algebra $\mathfrak f$, the category $\mathfrak f\mods$ of its finite-dimensional representations is Tannakian. We denote by $(\exp \mathfrak f)$ its Tannakian pro-algebraic group. There is a full Tannakian subcategory of $\mathfrak f\mods$, generated by the adjoint representation, denoted $\bra \ad_{\mathfrak f} \ket^{\T}$. Its Tannakian group is algebraic (finite-dimensional), we denote it by $\exp (\mathfrak f)^\ad$. $\exp (\mathfrak f)^\ad$ is canonically a quotient of $\exp \mathfrak f$.

If a 2-groupoid with one object $\Gamma$ acts on the category $\mathfrak f\mods$, we denote by $\exp^\Gamma \mathfrak f$ the Tannakian group of the category of equivariant objects $(\mathfrak f\mods)^\Gamma$. Denote by $\Gamma_{\leq 1}$ the 1-truncation.
Suppose 2-morphisms of $\Gamma$ act by trivial automorphisms on the adjoint representation~$\ad_\ff \in \ff\mods$.
Then there is a full Tannakian subcategory $\bra \ad_{\mathfrak f}, (\Vect)^\Gamma \ket^\T$, generated by the adjoint representation and by the subcategory $(\Vect)^\Gamma$
(here $\Vect$ is the subcategory of trivial representations in $\ff\mods$).
We denote its Tannakian group by $\exp^\Gamma (\mathfrak f)^\ad$. 
If $\Gamma_{\leq 1}$ is finite, $\exp^\Gamma (\mathfrak f)^\ad$ is finite-dimensional algebraic. $\exp^\Gamma (\mathfrak f)^\ad$ is canonically a quotient of $\exp^\Gamma \mathfrak f$.

Take a symplectic leaf $S \neq \{0\}$.
Take a closed $x \in S$. Consider its $\torus_\hbar$-orbit $\torus_\hbar x$. Consider the ``stabilizer of $\torus_\hbar x$ in $\Om_X$'': $\h_{\torus_\hbar x} := \rho_x^{-1}(\bC_\hbar x)$ (here $\bC_\hbar x \subset T_x S$).
Equivalently, it is the inertia Lie algebra of the algebroid $\Om_X \oplus \cO_X \hbar$ at $x$.

The result below is our best estimate of the category of modules on a general leaf of a general symplectic singularity.

\begin{thm} \label{thm: on simple modules on a symplectic leaf}
Let $S$ be a symplectic leaf of $X$.
\begin{enumerate}[1)]
    \item\label{case: S neq 0} Suppose $S \neq \{ 0 \}$.
There are faithfully flat surjections of pro-algebraic groups
\begin{equation} \label{eq: exp to gal to exp}
\exp^{\pi_{\leq 2}(\bP S)}(\h_{\torus_\hbar x}) \twoheadrightarrow \Gal^{(\torus_\hbar, \Om_X)} (S) \twoheadrightarrow \exp^{\pi_{\leq 2}(\bP S)}(\h_{\torus_\hbar x})^\ad,
\end{equation}
where $\bP S = S / \torus_\hbar$.

\item\label{case: S = 0} Suppose $S = \{0\}$. Then simple objects of $\cP_\coh^{(\torus_\hbar, \Om_X)}(X)$, supported at 0, are in correspondence with simple $\g$-modules up to a grading shift. In other words, \[\Gal^{(\torus_\hbar, \Om_X)}(S)^\red = (\exp \g)^\red \times \torus_\hbar.\]
\end{enumerate}
\end{thm}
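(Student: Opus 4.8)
The plan is to handle the two cases separately. For case~\ref{case: S neq 0} the idea is to reduce graded Poisson modules on $S$ to modules over a single transitive Lie algebroid on the projective quotient $\bP S = S/\torus_\hbar$, and then feed this into the holomorphic equivalence of Theorem~\ref{thm: L-modules is equivariantization} together with the GAGA results of Section~\ref{subsec: gaga}. First I would observe that, since $0 \notin S$, the $\torus_\hbar$-action on the smooth leaf $S$ has finite stabilizers, making $S$ a (possibly orbifold) $\torus_\hbar$-bundle over $\bP S$; hence the HC pair $(\torus_\hbar, \Om_X|_S \oplus \cO_S \hbar)$ descends to a Lie algebroid $\bar\cL$ on $\bP S$ with $\Coh^{(\torus_\hbar, \Om_X)}(S) \simeq \Coh^{\bar\cL}(\bP S)$, compatibly with the fiber functors at $x$ and at its image $\bar x$. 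This $\bar\cL$ is transitive (already $\Om_X|_S$ surjects onto $\cT_S$), and a rank count identifies its inertia at $\bar x$ with $\h_{\torus_\hbar x} = \rho_x^{-1}(\bC_\hbar x)$, the inertia of $\Om_X \oplus \cO_X\hbar$ at $x$. In particular $\Gal^{(\torus_\hbar, \Om_X)}(S) = \Gal^{\bar\cL}(\bP S)$, so the target becomes the differential Galois group of a transitive algebroid on the projective variety $\bP S$.

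Next I would set up the compactification needed for GAGA. Closures of symplectic leaves are again normal conical symplectic singularities, so $\ol{\bP S} := (\ol S\setminus\{0\})/\torus_\hbar$ is normal projective, $\bP S \subset \ol{\bP S}$ is open with complement of codimension $\geq 2$ (adjacent leaves differ in dimension by at least $2$), and $\bP S$ is the open $\bar\cL$-orbit. This is exactly the hypothesis of Proposition~\ref{prop: gaga for open of codim 2}, so analytification $\Coh^{\bar\cL}(\bP S)\hookrightarrow \Coh^{\bar\cL^\an}((\bP S)^\an)$ is fully faithful, and by Corollary~\ref{cor: analytic galois to algebraic is faithfully flat gaga} the induced map $\Gal^{\bar\cL^\an}((\bP S)^\an)\twoheadrightarrow \Gal^{\bar\cL}(\bP S)$ is faithfully flat. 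Setting $\Gamma := \pi_{\leq 2}(\bP S)$, Theorem~\ref{thm: L-modules is equivariantization} identifies $\Coh^{\bar\cL^\an}((\bP S)^\an)\simeq (\h_{\torus_\hbar x}\mods)^{\Gamma}$, whose Tannakian group is $\exp^{\Gamma}(\h_{\torus_\hbar x})$. This yields the left surjection in~\eqref{eq: exp to gal to exp}.

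For the right surjection I would show that the Tannakian subcategory $\bra \ad_{\h_{\torus_\hbar x}}, \Vect^{\Gamma}\ket^\T$ lies in the essential image of analytification, so that by Tannakian duality it corresponds to a faithfully flat quotient of $\Gal^{\bar\cL}(\bP S)$. The adjoint representation is the analytification of the locally free inertia sheaf of $\bar\cL$, which is a genuine algebraic $\bar\cL$-module by Example~\ref{example: adjoint modules}; hence $\pi_2$ acts trivially on it and $\exp^{\Gamma}(\h_{\torus_\hbar x})^\ad$ is defined. Every local system in $\Vect^{\Gamma}$ is algebraic as well: being locally free it has coherent pushforward across the codimension $\geq 2$ complement, and on the projective $\ol{\bP S}$ it is algebraic by the regular-singularities GAGA for algebroids underlying Proposition~\ref{prop: gaga for open of codim 2}, so its restriction to $\bP S$ is algebraic. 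Since the essential image of analytification is a Tannakian subcategory closed under subobjects (the second part of Proposition~\ref{prop: gaga for open of codim 2}), it contains all of $\bra \ad, \Vect^{\Gamma}\ket^\T$, and the resulting inclusion into $\Coh^{\bar\cL}(\bP S)$ is full and stable under subobjects; this is precisely the right faithfully flat surjection.

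For case~\ref{case: S = 0}, Proposition~\ref{classification of simple perverse coherent sheaves} identifies simples supported at $0$ with graded simple modules over the Lie algebra $\Om_X|_0 = T^*_0 X$. By Corollary~\ref{cotangent space at 0 to g} this algebra surjects onto $\g$ with nilpotent kernel and has reductive part $\g^\red$; since the nilpotent radical acts by zero on any finite-dimensional simple, simple $T^*_0 X$-modules coincide with simple $\g$-modules, and the graded ($\torus_\hbar$-equivariant) structure contributes only a grading shift. On Tannakian groups this reads $\Gal^{(\torus_\hbar, \Om_X)}(\{0\})^\red = (\exp\g)^\red \times \torus_\hbar$, the $\torus_\hbar$-factor recording the shift. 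I expect the main obstacle to be in case~\ref{case: S neq 0}: justifying the descent $\Coh^{(\torus_\hbar, \Om_X)}(S) \simeq \Coh^{\bar\cL}(\bP S)$ and the inertia computation (including handling finite stabilizers of the $\torus_\hbar$-action, which may force $\bP S$ to be treated as an orbifold), and, above all, the right surjection, where controlling which holomorphic objects are algebraic forces an essential use of normality of leaf closures, the codimension $\geq 2$ hypothesis, and GAGA. Case~\ref{case: S = 0} is comparatively routine once Corollary~\ref{cotangent space at 0 to g} is in hand.
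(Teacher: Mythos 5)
Your overall architecture is the paper's: descend graded Poisson modules on $S$ to a transitive algebroid on $\bP S$, compactify, combine Proposition~\ref{prop: gaga for open of codim 2} and Corollary~\ref{cor: analytic galois to algebraic is faithfully flat gaga} with Theorem~\ref{thm: L-modules is equivariantization} for the left surjection, and exhibit the adjoint representation together with $\Vect^{\pi_1(\bP S)}$ in the essential image of analytification for the right one; case 2) via Corollary~\ref{cotangent space at 0 to g} is also the paper's argument. But there is a genuine gap in your compactification step: you assert that ``closures of symplectic leaves are again normal conical symplectic singularities, so $\ol{\bP S} := (\ol S \setminus \{0\})/\torus_\hbar$ is normal projective.'' This is false in general: leaf closures need not be normal (already nilpotent orbit closures outside type A can fail normality), and Kaledin's theorem \cite[Theorem~2.5]{Kal06} produces a symplectic singularity only after passing to the \emph{normalization} of $\ol S$. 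Since normality of the compactification is an explicit hypothesis of Proposition~\ref{prop: gaga for open of codim 2} --- it is what makes $j_* M$ coherent via the Grothendieck finiteness theorem --- your application of GAGA is unjustified as stated. The paper repairs exactly this point: it takes the normalization $\nu: \wti S \rightarrow \ol S$, equips it with a conical structure (as in \cite[Lemma~2.5]{Los21}), notes that $\nu$ is a Poisson isomorphism over $S$ and that finiteness of $\nu$ keeps $\wti S \setminus \nu^{-1}(S)$ of codimension $\geq 2$, verifies normality of $\bP \wti S$ directly, and only then runs the GAGA argument for $\bP S \subset \bP \wti S$.

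A second, related omission: Proposition~\ref{prop: gaga for open of codim 2} requires the HC algebroid to be defined on the \emph{whole} normal projective variety, with the open piece contained in its open orbit, whereas you only construct $\bar\cL$ on $\bP S$. The paper obtains this by pulling back $\Om_X\vert_{\ol S}$ along $\nu$ (\cite[Remark~2.4.3.(2)]{Kae98}) to get an algebroid on all of $\wti S$, and then descending to $\bP \wti S$ via \cite[Lemma~1.8.7]{BB93}; your descent from $S$ to $\bP S$ (modulo the orbifold issue you yourself flag) is the same lemma, so the fix is to perform it on $\wti S$ rather than on $S$ alone. Note also that for the right surjection your pushforward-plus-GAGA argument for objects of $\Vect^{\pi_1(\bP S)}$ would again need the algebroid and normality on the boundary; the paper instead invokes the Riemann--Hilbert correspondence directly to see that analytification is essentially surjective on coherent $\cT_{\bP S}$-modules. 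The remaining ingredients --- the identification of the inertia with $\h_{\torus_\hbar x}$, triviality of the $\pi_2$-action on $\Vect$ (Remark~\ref{rem: action of pi_2 on vect is trivial}), algebraicity of the adjoint module (Example~\ref{example: adjoint modules}), and all of case 2) --- match the paper.
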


\begin{proof}
We first prove part~\ref{case: S neq 0}.

Consider the normalization of $\nu: \wti S \rightarrow \ol S$ of the closure $\ol S$ of $S$. $\wti S$ is an affine conical symplectic singularity: it follows from \cite[Theorem~2.5]{Kal06} that it is a symplectic singularity, and construction of the conical action is the same as in \cite[Lemma~2.5]{Los21}; we denote this conical action by $\torus_\hbar \curvearrowright \wti S$, so $\nu$ is $\torus_\hbar$-equivariant.

$\nu$ is a Poisson isomorphism over $S$, meaning that $\nu^{-1}(S)$ lies in the open leaf of $\wti S$.
Normalization is a finite morphism, hence \[\dim (\wti S \setminus \nu^{-1}(S)) = \dim (\ol S \setminus S)  \leq \dim \ol S - 2,\] so $\nu^{-1}(S)$ is open with complementary of codimension $\geq 2$ in $\wti S$.
In what follows, we identify $S$ and $\nu^{-1}(S)$.

$\nu^* ( \Om_X \vert_{\ol S})$ is naturally a Lie algebroid on $\wti S$ by \cite[Remark~2.4.3.(2)]{Kae98}. We denote it $\Om_X \vert_{\wti S}$, and consider the HC Lie algebroid $(\bC^\times_\hbar, \Om_X \vert_{\wti S})$ on $\wti S$.

Consider the projectivization of $\wti S$, $\bP \wti S := \left. (\wti S \setminus \{0\}) \right/ \torus_\hbar$.
From normality of $\wti S$, one can easily deduce that every principal open $\Spec (\cO(\wti S)_{(g)})$ (for homogeneous $g \in \cO(\wti S)$) is normal, hence $\bP \wti S$ is normal.

By \cite[Lemma~1.8.7]{BB93}, the Lie algebroid $\Om_X \vert_{\wti S}$ descends to $\bP \wti S$, denote it by $\bP(\Om_X \vert_{\wti S})$, and we have equivalences of categories 
\begin{equation*}
\Coh^{(\torus_\hbar, \Om_X\vert_{\wti S})}\bigl(\wti S\bigr) \simeq \Coh^{\bP(\Om_X \vert_{\wti S})} \bigl(\bP \wti S\bigr), \qquad \Coh^{(\torus_\hbar, \Om_X\vert_{S})}\bigl( S \bigr) \simeq \Coh^{\bP(\Om_X \vert_{S})} \bigl(\bP S \bigr).
\end{equation*}

In summary of all of the above, we are in the setup of Proposition~\ref{prop: gaga for open of codim 2}. Namely, we can apply this Proposition to projective $\bP \wti S$ and its open $\bP S$ with complementary of codimension $\geq 2$. Corollary~\ref{cor: analytic galois to algebraic is faithfully flat gaga} tells that analytification on $\bP S$ induces a faithfully flat surjection to the differential Galois group $\Gal^{\bP(\Om_{X} \vert_S)}(\bP S)$ from its analytic version, which, by Theorem~\ref{thm: L-modules is equivariantization} is isomorphic to $\exp^{\pi_{\leq 2}(\bP S)}(\h_{\torus_\hbar x})$. So we established the surjection 
\begin{equation} \label{eq: exp to gal}
\exp^{\pi_{\leq 2}(\bP S)}(\h_{\torus_\hbar x}) \twoheadrightarrow \Gal^{(\torus_\hbar, \Om_X)} (S).
\end{equation}

To establish the second arrow in~\eqref{eq: exp to gal to exp}, we need to show that the essential image of the analytification functor on $\Coh^{\bP(\Om_X \vert_S)}(\bP S)$ (whose Tannaka-dual is~\eqref{eq: exp to gal}) contains $\Vect^{\pi_1(\bP S )}$ and the adjoint representation of $\h_{\torus_\hbar}$. 

Indeed, $\Vect^{\pi_1(\bP S )}$ is identified with the category of $\cT_{(\bP S)^\an}$-modules, given the structure of $\bP(\Om_X\vert_S)^\an$-modules by restriction along the anchor map. Analytification is essentially surjective on coherent $\cT_{\bP S}$-modules (local systems) by the Riemann--Hilbert correspondence, hence the claim.
Note that by Remark~\ref{rem: action of pi_2 on vect is trivial}, the action of $\pi_2(\bP S)$ on the identity endofunctor of $\Vect$ and $\Vect^{\pi_1(\bP S)}$ is trivial.

The adjoint representation $\h_{\torus_\hbar x}$ lies in the essential image of analytification simply because it comes from the $\bP(\Om_X \vert_S)$-module $(\ker \rho)$ (see Example~\ref{example: adjoint modules}~\ref{example: adjoint action on ker rho}), which is obviously algebraic.

Now we prove~\ref{case: S = 0}.

Simple modules at 0 are the same as simple modules over the HC pair $(\torus_\hbar, \Om_X \vert_0) = (\torus_\hbar, T^*_0 X)$. Now the claim follows from Corollary~\ref{cotangent space at 0 to g}.
\end{proof}

On the open leaf, the category in question is the category of weakly-equivariant $\cO$-coherent D-modules. We expect that from the Riemann--Hilbert correspondence, one can deduce that analytification is essentially surjective on this category; in this case, the argument from our proof above would imply that the first arrow in~\eqref{eq: exp to gal to exp} is an isomorphism for it. See~\cite[Lemma~2.12]{Los21}.

\begin{rem}
Note that the profinite completion of $\pi_1(S)$ is finite by the main result of~\cite{Nam13}. One can show that the homomorphism of \'etale fundamental groups, induced by $S \rightarrow \bP S$ is surjective, hence the profinite completion $\wh \pi_1(\bP S)$ of $\pi_1(\bP S )$ is finite. One can expect that the action of $\pi_1(\bP S)$ on $\h_{\torus_\hbar x}\mods$ factors through $\wh \pi_1(\bP S)$ (compare with a theorem of Grothendieck~\cite{Gro70}). If so, one can formally deduce from~\cite{Gro70} that the equivariantizations $(\h_{\torus_\hbar x}\mods)^{\pi_1(\bP S)}$ and $(\h_{\torus_\hbar x}\mods)^{\wh \pi_1(\bP S)}$ are equivalent.
\end{rem}

\subsection{Remarks and further questions}
In this subsection, we make a few remarks about expected properties of the described general construction. See also Section~\ref{intro subsubsec: further questions}.


\begin{rem} \label{rem: !-restriction to slice}
It is a theorem of Kaledin that any symplectic leaf of a symplectic singularity admits a formal slice at any point, which is also a (formal) symplectic singularity \cite[Theorem~2.3]{Kal06}. 
Recently, it was proved in great generality by Namikawa--Odaka \cite{NO25} that this slice is conical. It is also known in many examples that such slice exists in \'etale topology (not just formally), see e.g. \cite[Section~7]{KT21}.

We expect that $!$-restriction to a slice at any point should be $t$-exact with respect to perverse coherent $t$-structures. See Remark~\ref{rem: pullback of poisson sheaves to slice} below.

In analytic topology, we may also have the following. Consider the union of symplectic leaves, whose closure contains the fixed leaf $S$. Then on a small chart, which is Poisson-isomorphic to the product of a disk on $S$ and a slice to $S$, a perverse coherent sheaf is the same as just a perverse coherent sheaf on the slice. This allows to consider the category of perverse coherent sheaves as a local system of categories over $S$. Then, similarly to Section~\ref{subsec: modules over transitive is equivariantization}, one may consider an action of the homotopy groupoid of $S$ on the category of perverse coherent sheaves on slice, and equivariantizations.
\end{rem}


\begin{rem}
    Let $\tilde G \twoheadrightarrow G$ be a morphism of algebraic groups. Then one can consider the HC pairs $(\tilde G, \Om_X)$, $(\tilde G \times \bC_\hbar^\times, \Om_X \oplus \cO_X \hbar)$, acting on $X$, and carry out the same construction for this pair (instead of $(G \times \bC_\hbar^\times, \Om_X \oplus \cO_X \hbar)$).

    For instance, if $X = \cN \subset \g$ is the nilpotent cone, the group of graded Poisson automorphisms is the adjoint group $G^{ad}$, while one may want to consider its covering $G$ (e.g. the simply connected group $G^{sc}$) and study the perverse basis with this equivariance, see Section~\ref{subsec: nilcone}.

    If $X$ is the quiver variety $\fM_{Q}(\bv, \bw)$ for an (oriented) tree $Q$, then the group of graded Poisson automorphisms is a quotient of $PGL_\bw = (\prod_i GL_{\bw_i} )/ \bC^\times$ (see \cite[Section~9.5]{BLPW14}), while one may want to replace it with the extension $GL_\bw = \prod_i GL_{\bw_i}$, considered in e.g.~\cite{Nak01a}.
\end{rem}

\begin{rem}
As we pointed out in Section~\ref{subseq: HC lie algebroids}, we consider only strongly equivariant modules over HC Lie algebroids in this paper. However, it may be interesting to investigate weakly-equivariant perverse coherent sheaves on symplectic singularities.
\end{rem}


\section{Examples} \label{sec: examples}
In this section, we treat the introduced general notion for certain particular symplectic singularities.

\subsection{Nilpotent cone} \label{subsec: nilcone}
Let $\g$ be a simple Lie algebra, $G$ some algebraic group, whose Lie algebra is $\g$, and $\cN \subset \g$ be the nilpotent cone. In this case, the original construction of \cite{Bez00, AB10} can be applied, and in this way one gets the perverse basis in $K^{G \times \torus_\hbar}(\cN)$. It follows from \cite{Bez06a, Bez09} that this basis is a part of the Kazhdan--Lusztig canonical basis in the affine Hecke algebra for the Langlands-dual group (see also \cite{Ost00}).

On the other hand, we have the construction of Section \ref{Section: symplectic singularities}.
The aim of this subsection is to show that it gives the same basis of the same space in this case.
Thus, our construction of the basis in Section~\ref{Section: symplectic singularities} can indeed be seen as a generalization of the known case of $\cN$ to the case of an arbitrary conical symplectic singularity.

\begin{thm} \label{perverse bases for nilcone are the same}
The functor $F: D^{b, (G, \Omega_\cN)}_\coh (\cN) \rightarrow D^{b, G}_\coh(\cN)$, which forgets the $\Omega_\cN$-action, is t-exact with respect to perverse t-structure on both sides. The corresponding functor $\cP^{(G, \Omega_\cN)}_\coh(\cN) \rightarrow \cP^G_\coh (\cN)$ maps simple objects to simple objects, and defines a bijection between isomorphism classes of simples.
In particular, the natural map 
\[
K^{(G, \Omega_\cN)}(\cN) \rightarrow K^G(\cN)\] is isomorphism, preserving perverse bases on both sides.

The same holds in the graded case. 

For $G = G^{\mathrm{sc}}$ simply connected, there is an isomorphism
\begin{equation} \label{eq: iso of poisson end equi K-theory of nilcone}
K^{(\torus_\hbar, \Om_\cN)}(\cN) \simeq K^{G \times 
\torus_\hbar}(\cN),
\end{equation}
preserving perverse bases.
\end{thm}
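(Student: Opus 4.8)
The plan is to deduce everything from the fact that the two sides are built from the same geometry and the same underlying coherent complexes, differing only in the extra $\Om_\cN$-structure carried on each leaf. First I would record the geometric input: the symplectic leaves of $\cN$ are exactly the nilpotent $G$-orbits, so $(\cN/\Om_\cN)^\tp = (\cN/G)^\tp$ as posets, the dimension function agrees, and since every orbit is even-dimensional the middle perversity $p(S) = -\tfrac12\dim S$ is the same on both sides. The defining conditions \eqref{definition of D^+ and D^-} are phrased through $\bi_x^*$ and $\bi_x^!$, which factor through the forgetful functor to $D^b_\coh(\cN)$ and hence depend only on the underlying complex of coherent sheaves. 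Because $F$ preserves this underlying complex and the stratification is unchanged, $\cF \in D^{p,\le 0}$ (resp. $D^{p,\ge 0}$) if and only if $F\cF$ is; this gives t-exactness of $F$ together with the sharper statement that $\cF$ is perverse exactly when $F\cF$ is.

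Next I would verify that $F$ commutes with the geometric functors $i_{Z*}$, $j^*$, $j^!$ and with the truncation functors $\tau^{\pm}$ attached to the perversities $p^{\pm}$ of Section~\ref{subsection: IC-extension}, all of which are insensitive to the forgotten $\Om_\cN$-action; consequently $F$ commutes with IC-extension, i.e. $F(\IC(S,V)) \simeq \IC(S, F|_S V)$ for every orbit $S$. Combined with the classification of simples in Proposition~\ref{classification of simple perverse coherent sheaves}, the bijection on simple perverse sheaves then reduces to the leafwise claim that $F|_S \colon \Coh^{(G,\Om_\cN|_S)}(S) \to \Coh^{(G, \cO_S \T \g)}(S) = \Coh^G(S)$ induces a bijection on simple objects.

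This leafwise comparison is the heart of the matter. Here I would exploit that the $G$-action on $\cN$ is Hamiltonian, so the comoment map furnishes $\cO_S \T \g \to \Om_\cN|_S$, and that $\Coh^G(S) = \Rep Z_G(x)$, whose simples are the irreducible representations of $Z_G(x)$. Dually (Section~\ref{subsection differential galois group}, \cite[Prop.~2.21]{DM82}) the exact faithful tensor functor $F|_S$ corresponds to a homomorphism $Z_G(x) \to \Gal^{(G,\Om_\cN|_S)}(S)$, and I must show it is a bijection on irreducible representations. The expected mechanism is that the inertia $\h_x = \ker \rho_x$ of $\Om_\cN|_S$, tied to the $G$-equivariant structure through the Harish-Chandra compatibility, is governed by $\g_x = \Lie Z_G(x)$: the composite $\g_x \hookrightarrow \g \to \Om_\cN|_S$ lands in $\h_x$ and the resulting fiberwise action coincides with the derivative of the $Z_G(x)$-action, so no data beyond $G$-equivariance survives on simples. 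At the deepest leaf $S = \{0\}$ this is precisely Corollary~\ref{cotangent space at 0 to g} together with Theorem~\ref{thm: on simple modules on a symplectic leaf}\ref{case: S = 0}, identifying simples at $0$ with simple $\g$-modules; for a general leaf I would invoke the Tannakian estimate of Theorem~\ref{thm: on simple modules on a symplectic leaf}\ref{case: S neq 0} to control the reductive part of $\Gal^{(\torus_\hbar,\Om_\cN|_S)}(S)$. The main obstacle lies exactly on the orbits along which $\cN$ is singular: there the fiber of $\Om_\cN|_S = i_S^*\Om_\cN$ is the possibly large Zariski cotangent space, so $\h_x$ is a priori strictly bigger than $\g_x$, and one must check that the extra, non-reductive directions act trivially on simple modules — equivalently, that $Z_G(x) \to \Gal^{(G,\Om_\cN|_S)}(S)$ is an isomorphism on reductive parts, which is what the $\torus_\hbar$-grading and the adjoint-representation bounds of Theorem~\ref{thm: on simple modules on a symplectic leaf} are designed to enforce.

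Granting the leafwise bijection, $F$ carries the basis of simple perverse sheaves bijectively onto the basis of simple perverse sheaves, and being exact it induces the asserted isomorphism $K^{(G,\Om_\cN)}(\cN) \xrightarrow{\sim} K^G(\cN)$ preserving perverse bases; the graded case is identical with $\torus_\hbar$ adjoined throughout. For the final statement with $G = G^{\mathrm{sc}}$ I would first apply the graded comparison for the group $G^{\mathrm{sc}}$ to obtain $K^{(G^{\mathrm{sc}} \times \torus_\hbar,\Om_\cN)}(\cN) \simeq K^{G^{\mathrm{sc}} \times \torus_\hbar}(\cN)$, and then remove the group equivariance on the Poisson side: every $(\torus_\hbar,\Om_\cN)$-module already carries a $\g$-action through the comoment map, and since $G^{\mathrm{sc}}$ is simply connected this algebraic $\g$-action integrates uniquely to a $G^{\mathrm{sc}}$-action on the finite-rank modules appearing on each leaf, so the forgetful functor $\Coh^{(G^{\mathrm{sc}} \times \torus_\hbar,\Om_\cN)}(\cN) \to \Coh^{(\torus_\hbar,\Om_\cN)}(\cN)$ is an equivalence and the two K-groups coincide. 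The requirement that $G$ be simply connected enters precisely through the identification of the monodromy datum $\pi_1(\bP S)$ with the component group $\pi_0(Z_{G^{\mathrm{sc}}}(x))$, using $\pi_1(G^{\mathrm{sc}}) = 0$, which is what makes the group-free Poisson category match the $G^{\mathrm{sc}}$-equivariant one rather than that of a non-simply-connected form.
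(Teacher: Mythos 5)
Your first two steps (t-exactness via the purely cohomological nature of conditions \eqref{definition of D^+ and D^-}, and commutation of $F$ with IC-extension) match the paper, as does your treatment of the simply connected case via automatic integrability of the finite-dimensional graded components. But there is a genuine gap at the step you yourself identify as "the heart of the matter": the leafwise surjectivity, i.e.\ that the $\g$-action on every \emph{simple} $G$-equivariant sheaf on an orbit $O$ factors through $\Om_\cN\vert_O$. You defer this to "the $\torus_\hbar$-grading and the adjoint-representation bounds of Theorem~\ref{thm: on simple modules on a symplectic leaf}", but that theorem cannot do this job: it only provides a two-sided \emph{estimate} (a pair of faithfully flat surjections sandwiching $\Gal^{(\torus_\hbar,\Om_X)}(S)$), not an identification, and it concerns the Tannakian group of the Poisson category on the leaf in isolation rather than the comparison with $\Coh^G(O)\simeq\Rep Z_G(x)$ that you need. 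Moreover, the first assertion of Theorem~\ref{perverse bases for nilcone are the same} is \emph{ungraded} — there is no $\torus_\hbar$ available — so any mechanism resting on the grading or on the projectivization $\bP S$ does not even apply there.

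The paper's actual argument is local and elementary in a different way: one checks $\ker\phi\subseteq\ker a$ on the formal neighborhood $O^{\wedge e}$, where Kapranov's equivalence \cite[Theorem~A.7.3]{Kap07} for transitive Lie algebroids on formal completions reduces everything to fibers of the inertia maps, $\ol\phi_e:\g_e\to\h_e$ and $\ol a_e:\g_e\to\End_\bC(V_e)$. Since $V_e$ is a simple $G_e$-module, $\ker\ol a_e\supseteq\g_e^{\mathrm u}$; and since $\h_e\simeq T^*_eS_e$ for the Slodowy slice $S_e$, Corollary~\ref{cotangent space at 0 to g} applied to the conical symplectic singularity $S_e$ (whose graded Hamiltonian vector fields form $\g_e^\red$) yields a surjection $\h_e\twoheadrightarrow\g_e^\red$, whence $\ker\ol\phi_e\subseteq\g_e^{\mathrm u}\subseteq\ker\ol a_e$. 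Note also a sign error in your setup: because the tautological Poisson embedding $\cN\hookrightarrow\g$ gives a \emph{surjection} of Lie algebroids $\cO_\cN\otimes\g\twoheadrightarrow\Om_\cN$, the fiber $\h_e$ is a \emph{quotient} of $\g_e$, not "a priori strictly bigger than $\g_x$" as you write (the Zariski cotangent space $T^*_e\cN$ may be large, but it is still a quotient of $\g$); the whole point is to show that the kernel of $\g_e\twoheadrightarrow\h_e$ is unipotent, which is exactly what the Slodowy-slice computation delivers. Your final remark identifying $\pi_1(\bP S)$ with $\pi_0(Z_{G^{\mathrm{sc}}}(x))$ is likewise not what simple-connectedness is used for: it enters only through the integrability of finite-dimensional $\g$-modules to $G^{\mathrm{sc}}$-modules.
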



\begin{proof}
Clearly, $(G, \Omega_\cN)$-orbits are the same as $G$-orbits.
Perversity of a sheaf is defined in terms of cohomological properties of restrictions to orbits, without appealing to equivariance. Thus, $F$ is indeed t-exact.

Take an orbit $j : O \rightarrow \cN$. We claim that $F \circ j_{!*} = j_{!*} \circ F$. 
Indeed, IC-extensions in Section~\ref{subsection: IC-extension} and in \cite[Section~3.2]{Bez00} are both defined as inverse to the restriction from the category $\cP_{!*}(O)$, which is defined in purely cohomological terms, independent of equivariance. Hence, $F$ intertwines these categories, and the claim follows.

Thus, to finish the proof, it is sufficient to show that for any fixed orbit $O$ the functor $F_O: \Coh^{(G,  \Omega_\cN\vert_O)}(O) \rightarrow \Coh^G(O)$ between abelian categories maps simples to simples and defines a bijection on isomorphism classes of those. From the tautological Poisson embedding $\cN \hookrightarrow \g$, we see that there is a surjection $\cO_\cN \T \g \twoheadrightarrow \Omega_\cN$ of Lie algebroids on $\cN$. Restricting to $O$, it becomes clear that $F_O: \Coh^{(G,  \Omega_\cN\vert_O)}(O) \rightarrow \Coh^{(G, \g)}(O) = \Coh^{G}(O)$ defines an injection on classes of simples.

Finally, we claim that the action of $(G, \g)$ on any simple module on $O$ factors through $(G, \Om_\cN \vert_O)$. Take a simple $(G, \g)$-module $V$, let $a: \cO_O \T \g \rightarrow \cEnd_\bC V$ be the action morphism, and let $\phi: \cO_O \T \g \rightarrow \Om_\cN \vert_O$ be the surjection. We need to show that $\ker \phi \subseteq \ker a$. 
This can be checked locally. 
Pick $e \in O$ and consider the formal neighborhood $O^{\wedge e}$. We have a $\cO_O^{\wedge e} \T \g$-module $V^{\wedge e}$ and the surjection $\phi^{\wedge e}: \cO_O^{\wedge e} \T \g \twoheadrightarrow \Om_\cN\vert_O^{\wedge e}$.
For any transitive Lie algebroid $(\cL, \rho)$ on $O^{\wedge e}$, \cite[Theorem A.7.3]{Kap07} tells that the category of $\cL$-modules is equivalent to the category of $(\ker_e \rho)$-modules by means of restriction to $e$. 
Applying it to $\cO_O^{\wedge e} \T \g$ and $\Om_\cN \vert_O^{ \wedge e}$, one sees that it is sufficient to show that $\ker \ol \phi_e \subset \ker \ol a_e$, where $\ol \phi_e: \g_e \rightarrow \fh_e$, $\ol a_e: \g_e \rightarrow \End_\bC (V_e)$ are fibers of morphisms of inertia bundles (here $\h$ is the inertia bundle of $\Om_\cN\vert_O$). 

Note that $V$ is a simple $G$-equivariant vector bundle on $O$; equivalently, $V_e$ is a simple $G_e$-module. In particular, $\ker \ol a_e$ contains the Lie algebra $\g_e^\mathrm{u}$ of the unipotent radical $G_e^{\mathrm{u}}$ of~$G_e$.

Now we claim that $\ker \ol \phi_e \subseteq \g_e^\mathrm{u}$. 
Indeed, we have $\Om_\cN \vert_e = T^*_e S_e \oplus T_e O$, where $S_e$ is the Slodowy slice at $e$, and there is an isomorphism of Lie algebras $T^*_e S_e \simeq \h_e$. It is well known that the Lie algebra of graded Poisson derivations of $S_e$ is the Lie algebra $\g_e^\red$ of the reductive part $G_e^\red$. 
So we have the composition $\g_e \xrightarrow{\ol \phi_e} \h_e \simeq T_e^*S_e \twoheadrightarrow \g_e^\red$, where the last arrow is given by Corollary \ref{cotangent space at 0 to g}. Since $\g_e^\red \simeq \g_e / \g_e^{\mathrm{u}}$, it follows that $\ker \ol \phi_e \subseteq \g_e^\textrm{u}$.

We have shown that $\ker \ol \phi_e \subseteq \g_e^\textrm{u} \subseteq \ker \ol a_e$, and the claim is proved.

The proof of the graded case is the same. 

Isomorphism~\eqref{eq: iso of poisson end equi K-theory of nilcone} is justified as follows: 
a graded coherent $\Om_\cN$- module over $\bC[\cN]$ is a direct sum of finite-dimensional graded components, and each component is stable under the $\g$-action (which comes from the map $\cO_\cN \T \g \rightarrow \Om_\cN$). 
Hence, the condition of being integrable along $G^{\mathrm{sc}}$ holds automatically for graded modules,
and $K^{(\torus_\hbar, \Om_\cN)}(\cN) \simeq K^{(\torus_\hbar \times G, \Om_\cN)}(\cN) \simeq  K^{G \times \torus_\hbar}(\cN)$. 
\end{proof}

\begin{example}
Let us illustrate Theorem~\ref{perverse bases for nilcone are the same} for two extreme orbits: the open and the closed. For simplicity, take the simply connected group $G^\mathrm{sc}$.

Consider simple $G^\mathrm{sc}$-equivariant sheaves on the open orbit $O^\mathrm{reg}$ in $\cN$. It is well known that the connected component of the stabilizer of a point on $O^\mathrm{reg}$ in $G^\mathrm{sc}$ is unipotent. Hence, on any irreducible representation, the action factors through the quotient by the connected component of identity, which means that the corresponding vector bundle is a local system, and the induced action of $\g$ indeed factors through $\Om_{\cN} \vert_{O^\mathrm{reg}} \simeq \Om_{O^\mathrm{reg}}$, as Theorem~\ref{perverse bases for nilcone are the same} predicts.
At the same time, we see that the action of $\g$ on a module that is not simple need not factor through $\Om_\cN$.

Now consider the closed orbit $\{0\} \in \cN$. On it, the category of $G^\mathrm{sc}$-representation indeed coincides with the category of modules over $\Om_{\cN}\vert_0 \simeq T^*_0 \cN \simeq \g$, also in agreement with Theorem~\ref{perverse bases for nilcone are the same}.
\end{example}

It would be interesting to investigate an analogue of Theorem~\ref{perverse bases for nilcone are the same} for the case of Kato's exotic nilpotent cone, see \cite{Nan13}.



\subsection{Affine Grassmannian slice} \label{subsec: affine grassmannian slices}

The category of perverse coherent sheaves on the affine Grassmannian was studied in \cite{BFM05, CW19, FF21, Dum24}. In particular, in \cite{FF21} it is proved that in type A, classes of simple perverse coherent sheaves give Lusztig's dual canonical basis. 
In arbitrary type, we conjecture that under validity of \cite[Conjecture~1.10]{CW19}, the basis of simples in equivariant K-theory is Qin's common triangular basis in the corresponding quantum cluster algebra \cite{Qin17}. 
This is compatible with the result of \cite{FF21} in type A, see \cite{Qin24}. Note that outside of type A, it is not yet known whether the common triangular basis exists in this quantum cluster algebra.

In this section, we prove that this basis arises as a particular case of our general construction, applied to a slice in the affine Grassmannian. Combining with results of Section~\ref{subsec: nilcone}, one can say that in a certain sense, our construction is a generalization of both mainly studied appearances of perverse coherent bases: in the nilpotent cone and in the affine Grassmannian.

Let us fix some notation. We assume that $G = G^{\mathrm{sc}}$ is a simply connected simple group, and denote by $G^\ad$ its adjoint form. We consider the thick affine Grassmannian $\Gr = \Gr_{G^\mathrm{sc}} = G^\mathrm{sc}((t^{-1})) / G^\mathrm{sc}[t]$ of $G^\mathrm{sc}$. For a dominant coweight $\la$ of $G^\mathrm{sc}$, 
we denote $t^\la \in \Gr$ the corresponding point in $\Gr$, and by $\Gr^\la$ its $G[t]$-orbit. We also have the group $G_1[[t{^{-1}}]]$, the kernel of the evaluation $t^{-1} \mapsto 0$ projection $G[[t^{-1}]] \rightarrow G$. Denote $\cW_\mu = G_1[[t^{-1}]] \cdot t^\mu$. We consider the transversal slice $\cW^\la_\mu = \ol \Gr^\la \cap \cW_\mu$. It is a conical symplectic singularity, whose symplectic leaves are $\Gr^\nu \cap \cW^\la_\nu$ for $0 \leq \nu \leq \la$, see \cite{KWWY14} for details ($\leq$ means the dominance partial order on coweights here and further). There is the loop rotation torus action on $\Gr$, we denote it by $\torus_\hbar \curvearrowright \Gr$.

Let $G^\ad_{\mu}$ be the centralizer of a cocharacter $\mu$ in $G^\ad$. This is a reductive group. 
It is easy to see that this group acts on $\cW^\la_\mu$ by graded Poisson automorphisms\footnote{It is claimed without proof in \cite[9.6(i)]{BLPW14} that the group of graded Poisson automorphisms of $\cW^\la_\mu$ is $G^\ad_{\la, \mu}$ --- the common centralizer of $\la$ and $\mu$. This is false as stated. For example, if $\mu = 0$, the whole $G^\ad$ acts on $\cW^\la_0$, and it does not need to centralize $\la$. Another case when it is false is when $\la = \mu$, since $\cW^\mu_\mu = \mathrm{pt}$, and the action of $G^\ad_\mu$ is not faithful. The purpose of Lemma \ref{lem: centralizer acts faithfully on slice} is to show that the last problem does not occur under a reasonable assumption. 
We expect that if its action is faithful, $G^\ad_{\mu}$ is the group of graded Poisson automorphisms of $\cW^\la_\mu$, but we do not prove it here.}.

\begin{lem} \label{lem: centralizer acts faithfully on slice}
Suppose $\mu < \la$ and there is $\la'$ such that  $\mu < \la' \leq \la$, and $ \la' - \mu$ is dominant. Then the action of $G^\ad_{\mu}$ on $\cW^\la_\mu$ is faithful. Hence, $G^\ad_{\mu}$ is a subgroup of the group of graded Poisson automorphisms of $\cW^\la_\mu$.
\end{lem}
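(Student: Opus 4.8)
The plan is to prove that any $g \in G^\ad_\mu$ acting trivially on $\cW^\la_\mu$ equals the identity, by working with the linearized action at the vertex $t^\mu$. Let me write $L = G^\ad_\mu$; since $L$ is the centralizer of a subtorus in the connected group $G^\ad$, it is connected reductive, with a maximal torus $T$ that I take inside a maximal torus of $G^\ad$, whose character lattice is the root lattice $Q$ of $G^\ad$.

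First, two reductions. \emph{(i)} Because $\la' \le \la$ gives $\ol\Gr^{\la'} \subseteq \ol\Gr^\la$, the closed $L$-stable subvariety $\cW^{\la'}_\mu \subseteq \cW^\la_\mu$ is preserved by $L$, and faithfulness on the subvariety forces faithfulness on $\cW^\la_\mu$; so I may rename $\la'$ to $\la$ and assume that $\eta := \la - \mu$ is a \emph{nonzero dominant} coweight, noting also that $\eta \in Q^\vee_{\ge 0}$ since $\mu \le \la$. After a Weyl conjugation I take $\mu$ dominant as well. \emph{(ii)} As $\cW^\la_\mu$ is a cone contracted by $\torus_\hbar$ to $t^\mu$ and $L$ acts by graded Poisson automorphisms fixing $t^\mu$, an element acting trivially on the cotangent space $T^*_{t^\mu}\cW^\la_\mu$ fixes a system of homogeneous generators of $\bC[\cW^\la_\mu]$, hence acts trivially; thus it suffices to prove that $L$ acts faithfully on $T^*_{t^\mu}\cW^\la_\mu$. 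Denote by $K$ the kernel of this linear action.

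Next I cut $K$ down to the central torus. By Lemma~\ref{A_d = g} and Corollary~\ref{cotangent space at 0 to g} applied to the conical symplectic singularity $\cW^\la_\mu$, the comoment map realizes $\Lie L \hookrightarrow T^*_{t^\mu}\cW^\la_\mu$ as an $L$-equivariant subspace carrying the adjoint action. Hence every $g \in K$ satisfies $\mathrm{Ad}(g)|_{\Lie L} = \id$, and since $L$ is connected this means $g \in Z(L) \subseteq T$. It now remains to separate $Z(L)$ on the remaining weight spaces: an element $g \in T$ acts on a $T$-weight-$\al$ line in $T^*_{t^\mu}\cW^\la_\mu$ by $\al(g)^{\pm 1}$, and $\al(g) = 1$ automatically for $\al \in \Phi_L$ (the roots of $L$). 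Writing $I_L$ for the simple roots of $L$, the simple roots $\al_i$ with $i \notin I_L$ together with $\Phi_L$ span $Q = X^*(T)$; so if I can show that each such $\al_i$ occurs as a $T$-weight in $T^*_{t^\mu}\cW^\la_\mu$, then $\al_i(g) = 1$ for all simple $i$, whence $g = e$ and faithfulness follows.

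The heart of the matter, and the main obstacle, is therefore the \emph{appearance of the simple root weights}: for every simple root $\al_i$ with $\langle \al_i, \mu\rangle \ne 0$, the weight $\al_i$ should occur in $T_{t^\mu}\cW^\la_\mu$. The natural candidate direction is $X_{\al_i} t^{-1} \in \Lie G_1[[t^{-1}]]$, which lies tangent to $\cW_\mu$ and has $T$-weight $\al_i$; the task is to verify that it survives the intersection with $\ol\Gr^\la$. I would analyze this through the minimal coroot degeneration $\exp(\epsilon X_{\al_i}t^{-1})\, t^\mu$, controlled by the standard order criterion for membership in $\ol\Gr^\la$ and reduced to rank one. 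The key combinatorial input is that, for $G$ simple, a nonzero dominant $\eta = \sum_j c_j \al_j^\vee \in Q^\vee_{\ge 0}$ has \emph{all} coefficients $c_j \ge 1$: indeed, if some $c_i = 0$ then dominance gives $0 \le \langle \al_i, \eta\rangle = \sum_{j \ne i} c_j \langle \al_i, \al_j^\vee\rangle \le 0$ (off-diagonal Cartan entries are nonpositive), forcing $c_j = 0$ for every neighbor $j$ of $i$ in the Dynkin diagram, and connectedness of the diagram then gives $\eta = 0$, a contradiction. This is exactly where simplicity of $G$ and the hypothesis that $\la' - \mu$ is dominant and nonzero enter. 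I expect this to secure that each $\al_i$ ($i \notin I_L$) is a genuine tangent weight; the delicate point I flag is the precise identification of the $T$-weights surviving in the singular tangent space $T_{t^\mu}\cW^\la_\mu$ (equivalently, the minimal generators of $\bC[\cW^\la_\mu]$ and their weights), which must be extracted from the affine Grassmannian combinatorics rather than from a naive rank-one count.
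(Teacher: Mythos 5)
Your route---linearize at the cone point $t^\mu$, cut the kernel down to $Z(L)$ via the adjoint representation sitting inside the cotangent space, then separate the central torus by exhibiting simple-root tangent weights---is genuinely different from the paper's argument, but as written it has two real gaps. First, the step embedding $\Lie L \hookrightarrow T^*_{t^\mu}\cW^\la_\mu$ is circular. Lemma~\ref{A_d = g} and Corollary~\ref{cotangent space at 0 to g} identify the degree-zero part of the cotangent Lie algebra with the Lie algebra of graded Poisson automorphisms of $\cW^\la_\mu$; what the comoment map therefore gives you a priori is an embedding of $\Lie L/\Lie N$, where $N$ is the kernel of the $L$-action on $\cW^\la_\mu$---exactly the group you are trying to kill. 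Concluding $\mathrm{Ad}(g)\vert_{\Lie L} = \id$, hence $g \in Z(L)$, requires $\Lie N = 0$, i.e.\ infinitesimal faithfulness, which you have not established independently (you would need, say, a direct verification that the comoment map $\g_\mu \rightarrow \bC[\cW^\la_\mu]$ is injective, which is essentially the Lie-algebra version of the statement). Second---and you flag this yourself---the ``heart of the matter,'' that every simple root $\al_i$ with $i \notin I_L$ actually occurs among the $T$-weights of $T_{t^\mu}\cW^\la_\mu$, is left unproved. Your positivity lemma (all coefficients of a nonzero dominant $\eta = \la' - \mu$ in the simple coroots are $\geq 1$) is correct and is indeed where simplicity and dominance would enter, but without identifying which weights survive in the singular tangent space the argument does not close. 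A smaller point: ``trivial on the cotangent space implies trivial on the cone'' is valid for semisimple elements (choose a $g$-stable graded lift of $\fm/\fm^2$), but fails for unipotent graded automorphisms (e.g.\ $y \mapsto y + x^2$ with $\deg y = 2$, $\deg x = 1$); this is harmless only once you know $K \subseteq T$, which rests on the circular step above.

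For comparison, the paper avoids all weight combinatorics. It restricts to the closed subvariety $\cW^{\la'}_\mu$, observes that faithfulness is a birational invariant, and uses that multiplication by $t^\mu$ gives a $G^\ad_\mu$-equivariant \emph{birational} morphism $\cW^{\la'-\mu}_0 \rightarrow \cW^{\la'}_\mu$ (a case of slice multiplication; this is precisely where the hypothesis that $\la'-\mu$ is dominant is used, so that $\cW^{\la'-\mu}_0$ makes sense). It then kills the kernel of the action of the full group $G^\ad$ on $\cW^{\la'-\mu}_0$ in one line: the kernel is normal, $G^\ad$ is simple, and the action is nontrivial. If you want to rescue your approach, the two missing inputs---injectivity of the comoment map and the tangent-weight computation at $t^\mu$---are plausibly true, but they constitute the actual mathematical content rather than routine verifications.
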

\begin{proof}
$\cW^{\la'}_\mu$ is a closed subscheme of $\cW^\la_\mu$, so it is sufficient to check the faithfulness of the action on $\cW^{\la'}_\mu$. 
That is what we show below.

The action of a group on a scheme is faithful if an only if it is faithful on an open dense invariant subscheme, so it is a birational invariant. Multiplication by $t^{\mu}$ inside the Grassmannian gives a $G^\ad_{\mu}$-equivariant morphism $\cW_0^{\la' - \mu} \rightarrow \cW^{\la'}_\mu$ (see \cite[Section~2.5]{KWWY14}). This is a particular case of the slice multiplication morphism, hence it is birational (see \cite[Section~2.(vi)]{BFN19}, \cite[Remark~5.8]{KP21}). 
So it is sufficient to check that the action of $G^\ad_\mu$ on $\cW_0^{\la' - \mu}$ is faithful. We check that even the action of the whole $G^\ad$ on it is faithful (recall that $\cW_0^{\la' - \mu}$ is $G^\ad$-invariant). Indeed, the subgroup of $G^\ad$, which acts trivially on the whole variety is normal. Since $G^\ad$ is simple and its action on $\cW_0^{\la' - \mu}$ is not trivial, this subgroup is trivial, as required.
\end{proof}

There is an action of $G(\cO) \simeq G[[t]]$ on $\ol \Gr^\la$; it induces an action of the Lie algebra $\g[[t]]$ by vector fields on the same space. This action restricts to a $\g[[t]]$-action on open $\cW^\la_0 \subset \ol \Gr^\la$. Consider the category of graded ($\torus_\hbar$-equivariant) $\g[[t]]$-equivariant coherent sheaves on $\cW^\la_0$, denoted $\Coh^{(\torus_\hbar, \g[[t]])}(\cW^\la_0)$.

\begin{prop} \label{prop: from schubert to slice}
The restriction induces an equivalence of categories:
\begin{equation}  \label{from schubert to slice}
\Coh^{\torus_\hbar \ltimes G[[t]]} (\ol \Gr^{\la}) \xrightarrow{\sim}    \Coh^{(\bC^\times_\hbar, \g[[t]])}(\cW^\la_0).
\end{equation}
The corresponding derived equivalence is t-exact with respect to perverse t-structures.
\end{prop}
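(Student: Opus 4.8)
The functor is induced by the open restriction $j^*$ along $j\colon \cW^\la_0 \hookrightarrow \ol\Gr^\la$: a $\torus_\hbar \ltimes G[[t]]$-equivariant sheaf restricts to a $\torus_\hbar$-equivariant sheaf on $\cW^\la_0$ whose differentiated $G[[t]]$-action makes it a module over the trivial Lie algebroid $\cO \otimes \g[[t]]$ (the $\g[[t]]$-action by vector fields being the anchor map). So the first task is to prove $j^*$ is an equivalence, and the main input is geometric. I would first record the slice facts: $\cW^\la_0 = \ol\Gr^\la \cap \cW_0$ is open, it is transverse to every $G[[t]]$-orbit $\Gr^\nu$ (meeting it in the leaf $S_\nu = \cW^\la_0 \cap \Gr^\nu$), and the translates cover everything, $G[[t]] \cdot \cW^\la_0 = \ol\Gr^\la$; these are standard properties of Grassmannian slices, cf.~\cite{KWWY14, BFN19, KP21}. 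I would also use that $G = G^{\mathrm{sc}}$ makes $G[[t]] = G \ltimes G_1[[t]]$ connected and simply connected in the relevant pro-algebraic sense.

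Full faithfulness reduces to a statement about invariant sections. For $\cF, \cG$ on $\ol\Gr^\la$ put $\cM = \cHom_{\cO}(\cF, \cG)$, a $G[[t]]$-equivariant coherent sheaf, so that $\Hom(\cF,\cG) = \Gamma(\ol\Gr^\la, \cM)^{\torus_\hbar \ltimes G[[t]]}$ and likewise $\Hom(j^*\cF, j^*\cG) = \Gamma(\cW^\la_0, j^*\cM)^{(\torus_\hbar, \g[[t]])}$. Since $G[[t]]$ is connected, $G[[t]]$-invariance equals $\g[[t]]$-invariance, so I must show that restriction $\Gamma(\ol\Gr^\la, \cM)^{\g[[t]]} \to \Gamma(\cW^\la_0, j^*\cM)^{\g[[t]]}$ is an isomorphism. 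Injectivity is immediate from $G[[t]] \cdot \cW^\la_0 = \ol\Gr^\la$: a $\g[[t]]$-invariant (hence $G[[t]]$-invariant) section killed by $j^*$ vanishes on the whole sweep. Surjectivity is where transversality enters: a section on the transverse slice that is flat along the orbit directions (i.e.\ $\g[[t]]$-invariant) propagates along $G[[t]]$-orbits, and the (simple-)connectedness of $G^{\mathrm{sc}}[[t]]$ guarantees the propagation is monodromy-free, producing the required global $G[[t]]$-invariant extension.

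For essential surjectivity I would spread a module $N$ on $\cW^\la_0$ out by the group action: on each translate $g \cdot \cW^\la_0$ set $g_* N$, and glue over overlaps using the $\g[[t]]$-equivariant structure of $N$, which supplies exactly the infinitesimal descent data; transversality and connectedness make the gluing consistent, yielding a $\torus_\hbar \ltimes G[[t]]$-equivariant sheaf restricting to $N$. Equivalently, one may phrase the second and third steps uniformly as faithfully flat descent along the smooth surjection $G[[t]] \times \cW^\la_0 \to \ol\Gr^\la$, under which $G[[t]]$-equivariant descent data on the source collapse, for connected $G[[t]]$, to the infinitesimal $\g[[t]]$-action on the slice.

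Finally, t-exactness. The leaf stratification $\{S_\nu\}$ of $\cW^\la_0$ is exactly the $j$-pullback of the orbit stratification $\{\Gr^\nu\}$ of $\ol\Gr^\la$, and the coherent-Satake perversity on $\ol\Gr^\la$ is the middle perversity measured by transverse-slice dimension, which equals $\dim S_\nu$; hence it restricts along $j$ to the middle perversity $p(S_\nu) = -\tfrac12 \dim S_\nu$ of Definition~\ref{def: category for sympl sing}. For the open embedding $j$ one has $j^* = j^!$, so Lemma~\ref{basic properties of perversity}(b) gives $j^*(D^{p, \leq 0}) \subset D^{p_Z, \leq 0}$ and $j^*(D^{p, \geq 0}) \subset D^{p_Z, \geq 0}$; thus $j^*$ is t-exact. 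Combined with the abelian equivalence above, the derived functor is a t-exact equivalence. The principal difficulty is the rigorous slice/descent input for the pro-algebraic group $G[[t]]$ and thick $\ol\Gr^\la$---in particular the monodromy-free propagation of invariant sections, where the connectedness and simple-connectedness of $G^{\mathrm{sc}}[[t]]$ are essential---together with checking that the two middle perversities genuinely coincide under $j$.
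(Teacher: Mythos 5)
Your overall strategy---prove an abelian equivalence via restriction $j^*$ and then match perversities---is the right shape, and your t-exactness discussion is essentially fine (note only that the leaves $S_\nu = \Gr^\nu \cap \cW^\la_0$ are \emph{open} in the orbits $\Gr^\nu$, so $\dim S_\nu = \dim \Gr^\nu$ and the middle perversities match for that reason, not because $\dim S_\nu$ equals a transverse-slice dimension). But the core of your argument has a genuine gap, which you yourself flag at the end without resolving: the passage from the infinitesimal $\g[[t]]$-module structure on the slice to honest group(oid) equivariance. In the algebraic category one cannot "propagate sections along orbits" or "glue $g_*N$ over translates" by invoking connectedness and simple connectedness of $G^{\mathrm{sc}}[[t]]$: there is no algebraic flow along vector fields, and infinitesimal equivariance of a coherent sheaf does \emph{not} in general integrate, even for a simply connected group. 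A standard counterexample: on $\bA^1 = \bG_a$, the line bundle $\cO$ with connection $d - dx$ is a coherent, infinitesimally $\operatorname{Lie}(\bG_a)$-equivariant sheaf, but it admits no $\bG_a$-equivariant structure (its horizontal section $e^x$ is not algebraic). Your "faithfully flat descent along $G[[t]] \times \cW^\la_0 \rightarrow \ol\Gr^\la$" reformulation has the same hole: the claim that descent data "collapse, for connected $G[[t]]$, to the infinitesimal $\g[[t]]$-action" is exactly the integration statement to be proven.

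What is missing is the contracting $\torus_\hbar$-action, which appears in your categories but is never used in your argument---and any correct proof must use it, since without the grading the statement is vulnerable to irregular-singularity phenomena of the type above. The paper's proof runs as follows: identify $\Coh^{\torus_\hbar \ltimes G[[t]]}(\ol\Gr^\la)$ with modules over the Lie groupoid $\cG$ obtained by restricting the action groupoid to $\cW^\la_0$ (an isomorphism of quotient stacks, since the slice meets every orbit); then use the contraction to reduce both $(\torus_\hbar, \g[[t]])$-modules and $\cG$-modules on $\cW^\la_0$ to their formal completions at the fixed point $t^0$; at the formal level, Lie algebroid data and formal groupoid data coincide (Kapranov's Theorem~A.7.3), and on each finite-order neighborhood the graded pieces are finite-dimensional, the $\g[[t]]$-action factors through $\g[t]/t^N$, and it integrates to $G(\bC[t]/t^N)$ precisely because $G$ is simply connected. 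So the integration step is performed degree by degree at the fixed point, where it is legitimate, rather than globally by monodromy arguments, where it is not. Your proposal would become a proof only after replacing the "propagation/gluing" paragraphs with such a localization-at-the-cone-point mechanism (or some equivalent rigidity input supplied by the grading).
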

\begin{proof}
Consider the action Lie groupoid $(\torus_\hbar \ltimes G(\cO)) \times \ol \Gr^\la \rightrightarrows \ol \Gr^\la$. Restrict it to the open subscheme $\cW^\la_0$, get a Lie groupoid, which we denote $\cG \rightrightarrows \cW^\la_0$. Since $\cW^\la_0$ intersects every $\torus_\hbar \ltimes G(\cO)$-orbit in $\ol \Gr^\la$, there is an isomorphism of quotient stacks $\Big[\left. \ol \Gr^\la \right/ \torus_\hbar \ltimes G(\cO)\Big] = [\left. \cW^\la_0 \right/ \cG]$. In particular, the category of $\cG$-equivariant coherent sheaves on $\cW^\la_0$ is equivalent to the category of $\torus \ltimes G(\cO)$-equivariant coherent sheaves on $\ol \Gr^\la$.

On the other hand, due to $\torus_\hbar$-equivariance, the category $\Coh^{\torus_\hbar, \g[[t]]}(\cW^\la_0)$ is equivalent to the same category on $(\cW^\la_0)^{\wedge t^0}$ --- the formal completion of $\cW^\la_0$ at $t^0 \in \cW^\la_0$: 
\begin{equation*}
\Coh^{\torus_\hbar, \g[[t]]}(\cW^\la_0) \simeq \Coh^{\torus_\hbar, \g[[t]]}((\cW^\la_0)^{\wedge t^0}).
\end{equation*}
Consider the $n$-th formal neighborhood $(\cW^\la_0)^{\wedge_n t^0}$. $\bC[(\cW^\la_0)^{\wedge_n t^0}]$ is a finite-dimensional graded algebra, and $\Coh^{\torus_\hbar, \g[[t]]}((\cW^\la_0)^{\wedge_n t^0})$ is the category of $\g[[t]]$-equivariant graded finitely generated modules over it. It is clear that it is equivalent to the category of $G(\cO)$-equivariant graded finitely generated modules (we use that $G$ is simply connected here). So, passing to limit, we obtain an equivalence
\begin{equation} \label{eq: from lie alg to group on formal}
\Coh^{\torus_\hbar, \g[[t]]}((\cW^\la_0)^{\wedge t^0}) \simeq \Coh^{G(\cO) \rtimes \torus_\hbar}((\cW^\la_0)^{\wedge t^0}).
\end{equation}
Consider $\cG^{\wedge t^0}$ --- the restriction of $\cG$ to $(\cW^\la_0)^{\wedge t^0}$. Since $t^0$ is $G(\cO) \rtimes \torus_\hbar$-fixed, we see that 
\begin{equation} \label{eq: from group to groupoid on formal}
\Coh^{G(\cO) \rtimes \torus_\hbar}((\cW^\la_0)^{\wedge t^0}) \simeq \Coh^{\cG^{\wedge t^0}}((\cW^\la_0)^{\wedge t^0})
\end{equation}
(the composition of~\eqref{eq: from lie alg to group on formal},~\eqref{eq: from group to groupoid on formal} is a particular case of a general phenomenon, that on a formal scheme, supported on a point, the data of a Lie algebroid is the same as the data of a formal Lie groupoid, see e.g. \cite[Appendix~A]{Kap07}).

Using $\torus_\hbar$-equivariance again, we return from $\cG$-equivariant sheaves on completion to those on the initial variety. Summing up all of the above, we have
\begin{multline*}
\Coh^{(\bC^\times_\hbar, \g[[t]])}(\cW^\la_0) \simeq \Coh^{(\bC^\times_\hbar, \g[[t]])}((\cW^\la_0)^{\wedge t^0}) \simeq \Coh^{\cG^{\wedge t^0}} ((\cW^\la_0)^{\wedge t^0}) \\
\simeq \Coh^{\cG} (\cW^\la_0) \simeq \Coh^{\torus_\hbar \ltimes G[[t]]} (\ol \Gr^{\la}).
\end{multline*}

The second claim about perverse t-exactness is obvious from the above and definitions. 
\end{proof}

Note also that there are equivalences 
\begin{equation} \label{eq: double and single brackets are the same}
\Coh^{G[[t]] \rtimes \torus_\hbar}(\ol \Gr^\la) \simeq \Coh^{G[t] \rtimes \torus_\hbar}(\ol \Gr^\la), \qquad \Coh^{(\torus_\hbar, \g[[t]]}(\ol \Gr^\la) \simeq \Coh^{(\torus_\hbar, \g[t]}(\ol \Gr^\la),
\end{equation}
since on any coherent sheaf, the action factors through quotient by some power of $t$. Similarly for $\cW^\la_0$. From now on, we may identify these categories.

Note that the $\g[t]$-action on $\cW_0 \simeq G_1[[t^{-1}]]$ is nothing else but the infinitesimal dressing action for the Manin triple $(\g[t], t^{-1}\g[[t^{-1}]], \g((t^{-1})))$ (see e.g. \cite[Section~2]{LW90}). That is, for the purpose of defining the Poisson--Lie structure on $G_1[[t^{-1}]]$, one can think of the cotangent sheaf $\Om_{G_1[[t^{-1}]]}$ as of $\cO_{G_1[[t^{-1}]]} \otimes \g[t]$, equipped with the Lie algebroid structure by means of the dressing action ($G_1[[t^{-1}]]$ is infinite-dimensional, and in its cotangent bundle $G_1[[t^{-1}]] \times \g_1[[t^{-1}]]^*$ the dual should be understood as the dual element of the Manin triple). Restricting to the closure of a Poisson leaf $\cW^\la_0$ we have a surjection of Lie algebroids
\begin{equation} \label{surjection g[t] to cotangent on slice}
\cO_{\cW^\la_0} \T \g[t] \simeq  \Om_{\cW_0}\Big| _{\cW^\la_0} \twoheadrightarrow \Om_{\cW_0^\la}.
\end{equation}

Until the end of this subsection, we write $\Om$ for $\Om_{\cW_0^\la}$.

\begin{thm} \label{thm: basis for slice in affine gr}
The functor 
\[F: D^{b, (\bC^\times_\hbar, \Om)}_\coh (\cW_0^\la) \rightarrow D_\coh^{b, (\bC^\times_\hbar, \g[t])} (\cW_0^\la) \simeq D_\coh^{b, \torus_\hbar \ltimes G[[t]]}(\ol \Gr^\la),\] induced from \eqref{surjection g[t] to cotangent on slice}, \eqref{from schubert to slice}, \eqref{eq: double and single brackets are the same} is t-exact with respect to perverse t-structures on both sides. The corresponding functor $\cP^{(\bC^\times_\hbar, \Om)}_\coh (\cW_0^\la) \rightarrow \cP_\coh^{(\bC^\times_\hbar, \g[t])} (\cW_0^\la)$ maps simple objects to simple objects and defines an injection on the isomorphism classes of those;
it defines a bijection between isomorphism classes of simples with support $\cW_0^\nu$ for $\nu$ such that there is $\nu^\prime$ s.t. $\la > \nu' \geq \nu$ and $\la - \nu'$ is dominant.

In particular, for any $\nu' < \la$ s.t. $\la - \nu'$ is dominant, we get a diagram of K-groups 
\[\begin{tikzcd}
	{K^{(\mathbb C^\times_\hbar, \Omega_{\mathcal W^\lambda_0})}(\mathcal W^\lambda_0)} & { K^{\mathbb C^\times_\hbar \ltimes G(\mathcal O)}(\overline{\mathrm{Gr}}^\lambda)} \\
	{K^{(\mathbb C^\times_\hbar, \Omega_{\mathcal W^\lambda_0})}(\mathcal W^{\nu'}_0)} & {K^{\mathbb C^\times_\hbar \ltimes G(\mathcal O) }(\overline{\mathrm{Gr}}^{\nu'})}
	\arrow[hook, from=1-1, to=1-2]
	\arrow[hook, from=2-1, to=1-1]
	\arrow["\simeq", from=2-1, to=2-2]
	\arrow[hook, from=2-2, to=1-2]
\end{tikzcd}\]
in which every arrow respects perverse bases.
\end{thm}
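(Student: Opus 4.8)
The plan is to follow the proof of Theorem~\ref{perverse bases for nilcone are the same} for the nilpotent cone, with the surjection of Lie algebroids~\eqref{surjection g[t] to cotangent on slice} playing the role of $\cO_\cN \T \g \twoheadrightarrow \Om_\cN$. First I would establish t-exactness of $F$. Because~\eqref{surjection g[t] to cotangent on slice} is surjective, the anchor maps of $\g[t]$ and of $\Om$ have the same image, so the $\g[t]$-orbits and the $\Om$-orbits on $\cW_0^\la$ coincide --- both are the symplectic leaves. Perversity of a complex is a cohomological condition on the restrictions $\bi_x^*,\bi_x^!$ to generic points of orbits and is insensitive to the equivariant structure, so $F$ preserves $D^{p,\le 0}$ and $D^{p,\ge 0}$; the same observation shows $F$ preserves the subcategories $\cP_{!*}(S)$ and therefore commutes with the IC-extension functors $j_{!*}$ of Section~\ref{subsection: IC-extension}. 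Together with the classification of simples in Proposition~\ref{classification of simple perverse coherent sheaves}, this reduces the whole statement to the behaviour, on each leaf $S$, of the exact forgetful functor $F_S\colon \Coh^{(\torus_\hbar,\Om|_S)}(S)\to \Coh^{(\torus_\hbar,\g[t]|_S)}(S)$.

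Next I would treat $F_S$. Being restriction of modules along the surjection $\cO_S\T\g[t]\twoheadrightarrow \Om|_S$, it is exact and fully faithful, and it sends simples to simples: any $\g[t]$-subobject of an $\Om$-module is automatically an $\Om$-subobject, since $\ker(\cO_S\T\g[t]\to \Om|_S)$ already acts by zero on the ambient module. Fully faithfulness then gives that $F_S$, and hence $F$, is injective on isomorphism classes of simple objects; combined with Proposition~\ref{classification of simple perverse coherent sheaves} and the IC-commutation from the first paragraph, this proves the first two assertions, namely that $F$ sends simples to simples and is injective on classes.

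For the bijectivity statement I would show that, for a leaf $S$ with $\ol S = \cW_0^\nu$ satisfying the hypothesis on $\nu$, every simple $(\torus_\hbar,\g[t]|_S)$-module lies in the essential image of $F_S$, i.e. its $\g[t]$-action factors through $\Om|_S$. As in the nilcone case this is checked on inertia fibres at a point $s\in S$: one must verify $\ker\ol\phi_s\subseteq \ker\ol a_s$, where $\ol\phi_s\colon \g[t]_s\to \h_s$ is the fibre of~\eqref{surjection g[t] to cotangent on slice} on inertia Lie algebras and $\ol a_s$ is the action on the simple module. Via Proposition~\ref{prop: from schubert to slice} this module corresponds to an irreducible representation of the $G(\cO)$-stabilizer of $t^\nu$, on which the unipotent radical acts trivially, so it suffices to place $\ker\ol\phi_s$ inside that unipotent radical. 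Here I would use Kaledin's formal slice decomposition \cite[Theorem~2.3]{Kal06}, identifying the transverse slice to $S$ with the affine Grassmannian slice $\cW^\la_\nu$ and giving $\h_s\simeq T^*_{\mathrm{pt}}\cW^\la_\nu\oplus T_sS$, together with Corollary~\ref{cotangent space at 0 to g}, which yields a surjection from $T^*_{\mathrm{pt}}\cW^\la_\nu$ onto the reductive part of the graded Poisson derivations of $\cW^\la_\nu$ with nilpotent kernel. The crucial input is Lemma~\ref{lem: centralizer acts faithfully on slice}: \emph{precisely} under the assumption that there is $\nu'$ with $\la>\nu'\ge\nu$ and $\la-\nu'$ dominant, the centralizer $G^\ad_\nu$ acts faithfully on $\cW^\la_\nu$, so its Lie algebra $\g^\ad_\nu$ embeds into those derivations and the composite $\g[t]_s\to \h_s\to \g^\ad_\nu$ is the projection onto the reductive quotient of $\g[t]_s$, whose kernel is the unipotent radical. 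This step --- matching the inertia of $\g[t]$ with the centralizer and controlling the kernel of $\ol\phi_s$ via the faithful action --- is the one I expect to be the main obstacle, and it is exactly where the dominance hypothesis on $\nu$ enters (it is the hypothesis of Lemma~\ref{lem: centralizer acts faithfully on slice} after the substitution $\mu=\nu$, $\la'=\la-\nu'+\nu$).

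Finally, the diagram of K-groups follows formally. By Corollary~\ref{perverse coherent sheaves are artinian and noetherian} the classes of simples form bases of all four groups. The left- and right-hand arrows are the inclusions of the sub-bases of simples supported on the closed Poisson subvariety $\cW^{\nu'}_0\subset\cW^\la_0$ (respectively $\ol\Gr^{\nu'}\subset\ol\Gr^\la$), and the top arrow is $F$, which is injective on bases by the second paragraph. The bottom arrow is an isomorphism because, for a fixed $\nu'$ with $\la-\nu'$ dominant, the bijectivity just proved applies to all $\nu\le\nu'$, i.e. to exactly the simples supported on $\cW^{\nu'}_0$. Commutativity is immediate from the compatibility of $F$ with IC-extension, and every arrow manifestly maps basis elements to basis elements, so all four respect the perverse bases.
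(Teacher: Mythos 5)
Your proposal follows the paper's own proof essentially step for step: t-exactness and commutation with IC-extension exactly as in Theorem~\ref{perverse bases for nilcone are the same}, injectivity on classes of simples from the surjectivity of \eqref{surjection g[t] to cotangent on slice}, the factorization of the $\g[t]$-action through $\Om$ checked on inertia fibres at the point $t^\nu$ using the transverse slice $\cW^\la_\nu$, Corollary~\ref{cotangent space at 0 to g} and Lemma~\ref{lem: centralizer acts faithfully on slice} (with the correct substitution $\mu=\nu$, $\la'=\la-\nu'+\nu$ matching the dominance hypotheses, which the paper leaves implicit), and the formal deduction of the K-group diagram. The only blemish is notational: the inertia fibre is $\h_{t^\nu}\simeq T^*_{t^\nu}\cW^\la_\nu$ alone, while it is $\Om\vert_{t^\nu}$ that decomposes as $T_{t^\nu}O_\nu\oplus T^*_{t^\nu}\cW^\la_\nu$ --- a slip that does not affect your argument, since the composite you analyze factors through $\h_{t^\nu}$ in any case.
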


\begin{proof}
The proof basically repeats the proof of Theorem \ref{perverse bases for nilcone are the same}, the main difference being the usage of the Lie groupoid $\cG$ appearing in the proof of Proposition~\ref{prop: from schubert to slice}, instead of a Lie group action.

The fact that $F$ is t-exact and commutes with IC-extension from any leaf follows by the same argument as in the proof of Theorem~\ref{perverse bases for nilcone are the same}.

Pick a leaf $O_\nu = G(\cO).t^\nu \cap \cW_0$ for $0 \leq \nu < \la$. The functor $F_\nu: D^{b, (\bC^\times_\hbar, \Om)}_\coh (O_\nu) \rightarrow D_\coh^{b, (\bC^\times_\hbar, \g[t])} (O_\nu)$ maps distinct simples to distinct simples, since \eqref{surjection g[t] to cotangent on slice} is surjective. It is left to show that if $\nu$ is s.t. there is $\nu'$ as in the assumption, the action of $\g[t] \oplus \bC \hbar$ factors through $\Om \oplus \cO_{\cW^\la_0} \hbar$ on any simple $(\g[t] \oplus \bC \hbar)$-equivariant sheaf $V$ on $O_\nu$.

Denote $a: \cO_{O_\nu} \T (\g[t] \oplus \bC \hbar) \rightarrow \cEnd_\bC V$, $\phi: \cO_{O_\nu} \T (\g[t] \oplus \bC \hbar) \rightarrow \Om \vert_{O_\nu} \oplus \cO_{O_\nu} \hbar$. Denote also by $\h$ the inertia bundle of $(\Om \oplus \cO_{\cW^\la_0} \hbar)\vert_{O_\nu}$.
By the same reasoning as in the proof of Theorem~\ref{perverse bases for nilcone are the same} (using \cite[Theorem~A.7.3]{Kap07}), 
it is sufficient to show that for any point $e \in O_\nu$, we have $\ker \ol \phi_e \subseteq \ker \ol a_e$, where $\ol a_e: (\g[t]\oplus \bC \hbar)_e \rightarrow \End_\bC V_e$, $\ol \phi_e: (\g[t] \oplus \bC \hbar)_e \rightarrow \h_e$. 
Conjugating by an element of $G[t]$, we may assume $e = t^{\nu}$ (although $t^\nu$ does not lie in $O_\nu$, all vector bundles $\torus_\hbar \ltimes G(\cO)$-equivariantly prolong to $\ol \Gr^\la$). Also, both morphisms $\ol a_e$ and $\ol \phi_e$ factor through $\g[t] / t^N \oplus \bC \hbar$ for some $N$, and we from now on assume that $\ol a_e$ and $\ol \phi_e$ have domain $(\g[t] / t^N \oplus \bC \hbar)_{t^\nu} = (\g[t] / t^N)_{t^\nu}  \oplus \bC \hbar$.

Arguing as in the proof of Theorem \ref{perverse bases for nilcone are the same}, we note that $V_{t^\nu}$ is an irreducible $\torus_\hbar \ltimes G(\bC[t] / t^N)_{t^\nu}$-module, hence the action on it factors through the reductive part, and hence $\ker \ol a_{t^\nu}$ contains $(\g[t]/t^N)^{\mathrm u}_{t^\nu}$ --- the Lie algebra of the unipotent radical of $G(\bC[t] / t^N)_{t^\nu}$.

On the other hand, we have $\Om\vert_{t^\nu} \simeq T_{t^\nu}{O_\nu} \oplus T^*_{t^\nu}{\cW^\la_\nu}$, where $\h_e$ can be identified with $T^*_{t^\nu}{\cW^\la_\nu}$. Let $\fl_0$ be the Lie algebra of homogeneous Poisson derivations of $\cW^\la_\nu$. Then we have the commutative diagram
\[\begin{tikzcd}
	{(\mathfrak{g}[t] / t^N)_{t^{\nu}} } & {\mathfrak h_{t^\nu}} & {T^*_{t^\nu} \mathcal W^\lambda_\nu} & {\mathfrak l_0} \\
	&&& {\mathfrak g_{\nu}}
	\arrow["{\overline \phi_{t^\nu}}", from=1-1, to=1-2]
	\arrow["\simeq", from=1-2, to=1-3]
	\arrow["q", two heads, from=1-3, to=1-4]
	\arrow["j", hook', from=2-4, to=1-1]
	\arrow["i"', hook, from=2-4, to=1-4]
\end{tikzcd}\]
where $q$ is given by Corollary \ref{cotangent space at 0 to g}, $i$ is given by Lemma \ref{lem: centralizer acts faithfully on slice}, and $j$ is the natural inclusion of the reductive part. It follows that $\ker (q \circ \ol \phi_{t^\nu})$ and hence $\ker \phi_{t^\nu}$ is contained in $(\g[t]/t^N)^{\mathrm u}_{t^\nu}$.

We thus obtain $\ker \ol \phi_{t^\nu} \subseteq (\g[t]/(t^N))^{\mathrm u}_{t^\nu} \subseteq \ker \ol a_{t^\nu}$; the theorem follows.
\end{proof}

\begin{rem}
One can easily check that this injective map on classes of simple objects is surjective not on every leaf.

However, Theorem \ref{thm: basis for slice in affine gr} shows that this map becomes an isomorphism on the leaf $\cW_0^{\la'}$ when we consider it as a subvariety of $\cW_0^{\la}$ for a larger $\la$. 
So, we have a filtered diagram given by morphisms preserving perverse bases ${K^{(\mathbb C^\times_\hbar, \Omega_{\mathcal W^{\lambda'}_0})}(\mathcal W^{\lambda'}_0)} \hookrightarrow {K^{(\mathbb C^\times_\hbar, \Omega_{\mathcal W^{\lambda}_0})}(\mathcal W^{\lambda}_0)}$ for $\la \geq \la'$, and its colimit as $\la \to \infty$, is isomorphic to the colimit ${K^{G(\mathcal O) \rtimes \mathbb C^\times}({\mathrm{Gr}})} = \lim_{\la \to \infty} {K^{G(\mathcal O) \rtimes \mathbb C^\times}(\overline{\mathrm{Gr}}^\lambda)}$ with the same perverse basis. 
\end{rem}

\subsection{Other examples} \label{subsec: other examples}
In this Section, we say a few words about other natural examples of symplectic singularities. We do not give an explicit description as detailed as in Sections~\ref{subsec: nilcone},~\ref{subsec: affine grassmannian slices}, but explain how we think one should approach these cases.

\subsubsection{Slodowy slice}
Let $\cN \subset \g$  be the nilpotent cone. Fix a nilpotent $e \in \cN$, let $O_e$ be its $G$-orbit, and $\pi: S_e \hookrightarrow \cN$ the closed embedding of the Slodowy slice to $O_e$ in $\cN$ (by the Slodowy slice we mean not an affine subspace of $\g$, but its intersection with $\cN$).

There is the action Lie groupoid $G \times \cN \rightrightarrows \cN$ on $\cN$. Take its restriction to $S_e$, denote it by $\cG$ (explicitly, it is defined as a subvariety of points $(g, n)$ of $G \times \cN$, such that $n, g(n) \in S_e$). We denote by $\cN_{\geq e}$ the subscheme of $\cN$, which is the union of $G$-orbits on $\cN$, whose closure contains $e$ (these are the orbits, which intersect $S_e$ nontrivially). 
It is easy to see that there is an isomorphism of quotient stacks $[S_e / \cG] \simeq [\cN_{\geq e} / G]$. In particular, the category of $\cG$-equivariant sheaves on $S_e$ is equivalent to the category of $G$-equivariant sheaves on $\cN_{\geq e}$.

The Lie algebroid of the Lie groupoid $\cG$ can be described as follows. There is an action Lie algebroid $\cO_\cN \T \g$ on $\cN$, and one can take its pullback to $S_e$, $\pi^+ (\cO_\cN \T \g)$, which we simply denote by $\pi^+\g$ (see Section~\ref{subsec: direct and inverse image} or \cite[2.4.5]{Kae98} for a discussion of the Lie algebroid pullback). There is a natural functor $\Coh^G(\cN_{\geq e}) \simeq \Coh^{\cG} (S_e) \rightarrow \Coh^{\pi^+\g}(S_e)$, differentiating the action.

\begin{lem} \label{lem: pullback of cotangent algebroid to slodowy}
The pullback of the Poisson Lie algebroid $\Om_{\cN}$ on $\cN$ is isomorphic to the Poisson Lie algebroid on $S_e$: $\pi^+ \Om_\cN \simeq \Om_{S_e}$.
\end{lem}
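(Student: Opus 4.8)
Let me understand what's being claimed. We have the Slodowy slice $S_e \subset \cN$, which is the intersection with $\cN$ of the affine Slodowy slice (through $e$, transverse to the orbit $O_e$). There's the closed embedding $\pi: S_e \hookrightarrow \cN$. The cotangent/Poisson Lie algebroid $\Om_\cN$ on $\cN$ comes from the Poisson structure on $\cN$. We pull it back via $\pi^+$ to get a Lie algebroid on $S_e$. The claim is this pullback is the Poisson Lie algebroid $\Om_{S_e}$ of the slice's own Poisson structure.

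**Key facts I'd want to use.**

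The Slodowy slice $S_e$ is itself a conical symplectic singularity (this is classical — the transverse slice carries a Poisson structure, induced from $\g$). The relationship between $S_e$ and $\cN$ near a point of $S_e$ is crucial.

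The central geometric fact: near any point $s \in S_e$, there's a local product decomposition. If $s$ lies in orbit $O$ (which meets $S_e$), then étale-locally or formally, $\cN$ looks like $S_e \times O$ (more precisely, $\cN$ near $s$ is a product of the slice direction and the orbit direction). This is the Slodowy transversality statement, and crucially **this decomposition respects the Poisson structures** — the symplectic form on the orbit and the Poisson structure on the slice combine to give the Poisson structure on $\cN$.

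This Poisson product decomposition is exactly what's needed. The paper has already used an analogous fact in Remark~\ref{rem: !-restriction to slice} (Kaledin's formal slice theorem), and in the commented-out "restriction to formal slice" section there's the decomposition $\Om_X^{\wedge x} \simeq \Om_{\wh W^x} \oplus \Om_O^{\wedge x}$ of cotangent algebroids.

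**How to identify $\pi^+\Om_\cN$ with $\Om_{S_e}$.**

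Recall the definition $\pi^+\cL = \pi^*\cL \times_{\pi^*\cT_\cN} \cT_{S_e}$. For the specific case $\cL = \Om_\cN$, I need to understand both $\pi^*\Om_\cN$ and how the anchor map (the Poisson bivector) interacts with the subspace $\cT_{S_e} \subset \pi^*\cT_\cN$.

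Here is the clean approach. Both $\pi^+\Om_\cN$ and $\Om_{S_e}$ are Lie algebroids on $S_e$. I want a canonical isomorphism between them compatible with anchors and brackets. The anchor of $\Om_{S_e}$ is the Poisson bivector $\sigma_{S_e}: \Om_{S_e} \to \cT_{S_e}$ of the slice. The anchor of $\pi^+\Om_\cN$ is the second projection to $\cT_{S_e}$.

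Because $S_e$ is Poisson-transverse to the foliation in $\cN$ (the symplectic leaves of $\cN$ meet $S_e$ transversally and cleanly in the symplectic leaves of $S_e$), the surjection $\pi^*\Om_\cN = \Om_\cN|_{S_e} \twoheadrightarrow \Om_{S_e}$ (dual to the inclusion $\cT_{S_e} \hookrightarrow \cT_\cN|_{S_e} = \pi^*\cT_\cN$) is the key map. The point is that under this surjection, the Poisson bivector $\sigma_\cN|_{S_e}$ on $\cN$ restricts/descends to $\sigma_{S_e}$ on the slice — this is precisely the statement that $S_e$ is a Poisson transversal (a "cosymplectic" submanifold in the Poisson sense), which holds because the slice direction and orbit direction are symplectically orthogonal in the local product decomposition.

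**Plan of proof.**

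Here is how I would write it. First, invoke the local (formal, or étale after the results cited in Remark~\ref{rem: !-restriction to slice}) Poisson product decomposition of $\cN$ along $S_e$: near any $s \in S_e$ lying in an orbit $O$, one has a Poisson isomorphism $\cN \cong S_e \times O'$ where $O'$ is a symplectic (formal) disk in the orbit direction. This gives a direct sum decomposition of cotangent algebroids $\Om_\cN|_{S_e} \cong \Om_{S_e} \oplus (\text{orbit-cotangent part})$, completely analogous to $\Om_X^{\wedge x} \simeq \Om_{\wh W^x} \oplus \Om_O^{\wedge x}$. Second, I would show that $\pi^+\Om_\cN$ precisely extracts the $\Om_{S_e}$ summand: an element $(\alpha, v) \in \pi^*\Om_\cN \times_{\pi^*\cT_\cN}\cT_{S_e}$ has $v = \sigma_\cN(\alpha)$ forced to lie in $\cT_{S_e}$, which by the symplectic-orthogonality of the decomposition forces the orbit-component of $\alpha$ to vanish (since the orbit-cotangent directions map isomorphically under $\sigma_\cN$ to the orbit-tangent directions, which are complementary to $\cT_{S_e}$). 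Hence $\pi^+\Om_\cN \cong \Om_{S_e}$, and one checks this identification is compatible with brackets (it is, since both brackets are induced from the ambient Poisson bracket and the decomposition is Poisson).

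**The main obstacle.**

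The hard part is justifying the Poisson product decomposition along the whole slice $S_e$, not just formally at a single point. The cited results (Kaledin \cite{Kal06}, and the étale-local existence referenced via \cite{KT21}, \cite{NO25}) give the formal or étale-local decomposition at each point, which is enough to establish the algebroid isomorphism locally; since the statement $\pi^+\Om_\cN \cong \Om_{S_e}$ is local on $S_e$, gluing these local isomorphisms is routine provided one checks they are canonical (independent of the chosen splitting). I'd therefore phrase the proof to work formally/étale-locally and note that the resulting isomorphism, being defined intrinsically via the anchor maps and the transversality condition, glues to a global one. The subtlety to be careful about is that $S_e$ is the intersection with $\cN$, so it may be singular along smaller orbits; but since the Slodowy slice theory provides the transverse decomposition even there (the slice is transverse to $O_e$ and the induced Poisson structure is well-defined on the possibly-singular $S_e$), this causes no real trouble — one works with the cotangent algebroids $\Om$ rather than tangent sheaves throughout, which is exactly why the statement is phrased in terms of $\Om$.
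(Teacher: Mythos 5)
Your core mechanism is exactly the paper's: use a local Poisson product decomposition $\cN \simeq S_e \times (\text{symplectic disk})$ along the slice, note that the anchor maps the disk-cotangent summand isomorphically onto the disk-tangent directions, and conclude that the fiber product defining $\pi^+\Om_\cN$ kills that summand and leaves $\Om_{S_e}$. Where you genuinely diverge is in the globalization. You propose decompositions at \emph{every} point $s \in S_e$ (formal or \'etale) and then glue, arguing the isomorphism is canonical. The paper avoids this entirely with one observation you missed: the Kazhdan torus $\torus_e$ contracts $S_e$ to the single point $e$, and $\pi^+\Om_\cN$ is naturally $\torus_e$-equivariant, so the algebroid (and the claimed isomorphism) is determined by its restriction to the formal neighborhood $S_e^{\wedge e}$. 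There, only the classical decomposition $\cN^{\wedge e} \simeq O_e^{\wedge e} \mathbin{\widehat\times} S_e^{\wedge e}$ at the base point $e$ is needed, which gives $\Om_\cN^{\wedge e} \simeq \cT_{O_e}^{\wedge e} \oplus \Om_{S_e}^{\wedge e}$, and the computation of $\pi^+$ is the one you describe. The torus trick buys a three-line proof using only one application of the slice theorem at one point; your route buys a statement that would also work for non-conical transversals, but at a real cost.

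That cost is the soft spot in your writeup. At a point $s$ lying on a larger orbit $O \supset \ol{O}_e \setminus O_e$... more precisely with $O_e \subset \ol O$, the Kaledin/Slodowy decomposition at $s$ is a product of $O^{\wedge s}$ with a slice to $O$ --- not a product of $S_e^{\wedge s}$ with an $O_e$-dimensional symplectic disk. To get the decomposition you need, one must further factor the leaf $S_e \cap O$ out of $O$ compatibly, i.e.\ combine two slice decompositions; this is more than the classical statement at $e$ and is not supplied by the references as stated. Your gluing step can, however, be made rigorous without canonicity of the decompositions: there is a globally defined candidate map, namely the first projection $\pi^+\Om_\cN \rightarrow \pi^*\Om_\cN$ followed by the conormal-sequence surjection $\pi^*\Om_\cN \twoheadrightarrow \Om_{S_e}$, and it suffices to verify in each local model that this fixed map is an isomorphism of algebroids. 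If you insist on your route, phrase it that way; otherwise, the $\torus_e$-equivariance reduction is the cleaner fix and is the same device the paper uses again in Proposition~\ref{prop: from schubert to slice}.
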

\begin{proof}
There is the Kazhdan torus action on $\cN$, which contracts $S_e$ to $e$, denote it by $\torus_e$. $\pi^+ \Om_\cN$ is naturally equivariant with respect to $\torus_e$, hence it is determined by its restriction to the formal neighborhood $S_e^{\wedge e}$ at $e$.

Since $S_e$ is a slice, there is a Poisson decomposition
\begin{equation*}
\cN^{\wedge e} \simeq O_e^{\wedge e} \wh \times S_e^{\wedge e},
\end{equation*}
and in particular the decomposition of Lie algebroids:
\begin{equation} \label{eq: decomposition of lie algebroid in formal completion slodowy}
\Om_{\cN}^{\wedge e} \simeq \cT_{O_e}^{\wedge e} \oplus \Om_{S_e}^{\wedge e}.
\end{equation}
From \eqref{eq: decomposition of lie algebroid in formal completion slodowy}, it is clear that $(\pi^+\Om_\cN)^{\wedge e} \simeq (\pi^{\wedge e})^+ \Om_\cN^{\wedge e} \simeq \Om_{S_e}^{\wedge e}$, as required.
\end{proof}

\begin{rem} \label{rem: pullback of poisson sheaves to slice}
The same proof as in Lemma~\ref{lem: pullback of cotangent algebroid to slodowy} works for any (formal) conical slice to a Poisson leaf of a symplectic singularity. It justifies the existence of an inverse image functor from Poisson sheaves on a symplectic singularity to those on a slice. We expect that $!$-restriction should be t-exact w.r.t. perverse t-structures, see also Remark~\ref{rem: !-restriction to slice}.
\end{rem}

There is a surjective morphism $\cO_\cN \T \g \twoheadrightarrow \Om_\cN$ of Lie algebroids on $\cN$, see Section~\ref{subsec: nilcone}. Restricting to $S_e$, we get a surjection of Lie algebroids on $S_e$, $\pi^+ \g \twoheadrightarrow \Om_{S_e}$ (it is straightforward to see that $\pi^+$ preserves surjectivity). It is also straightforward (as in the proofs of Theorems~\ref{perverse bases for nilcone are the same}~and~\ref{thm: basis for slice in affine gr}) to see that this surjection of Lie algebroids induces a functor between categories of perverse coherent sheaves $\cP_\coh^{\Om_{S_e}}(S_e) \rightarrow \cP_\coh^{\pi^+\g}(S_e)$, and that it commutes with IC-extension from any symplectic leaf. 

Note that slices to symplectic leaves of $S_e$ are locally isomorphic to Slodowy slices in the nilcone (see \cite[Section~7.4]{KT21}). Thus, the same argument as in the proof of Theorem~\ref{perverse bases for nilcone are the same} 
shows that given a simple $\cG$-equivariant coherent sheaf on any symplectic leaf of $S_e$, the action of $\pi^+\g$ on it factors through $\Om_{S_e}$.

It is not true in general, however, that simple $\cG$-modules on a leaf are simple as $\pi^+\g$-modules. For example, on the closed leaf $\{e\}$, simple $\cG$-modules are simple representations of the stabilizer $G_e$ of $e$ in $G$, while simple $\pi^+\g$-modules are simple representations of its Lie algebra $\g_e$. If $G_e$ is not connected, the simples differ.

So we have K-groups $K^{\cG}(S_e) = K^{G}(\cN_{\geq e})$, $K^{\pi^+\g}(S_e)$ and $K^{\Om_{S_e}}(S_e)$ with perverse bases. They are not isomorphic in general, but are very closely related. We believe one should be able to add some corrections to these K-groups (and categories), so that they become isomorphic (with bases coinciding); possibly this should involve the homotopy groupoid of the orbit $O_e$, see Remark~\ref{rem: !-restriction to slice}.

So we explained that the perverse basis in $K^{\Om_{S_e}}(S_e)$ should be related to the perverse basis in $K^{G}(\cN_{\geq e})$, which in turn is part of the affine Kazhdan--Lusztig basis for $G^\vee$. One can count this as evidence that for a general symplectic singularity, our basis should enjoy some KL-type properties.


\subsubsection{Affine Grassmannian slice to a nonzero orbit}
In Section~\ref{subsec: affine grassmannian slices}, we related the perverse basis for $\cW_0^\la$ to the perverse basis for $\ol \Gr^\la$. Here we briefly discuss a possible approach to the slice $\cW^\la_\mu$ for $\mu \neq 0$.

Let $\pi: \cW^\la_\mu \hookrightarrow \ol \Gr^\la$ be the locally closed embedding. Recall the Lie algebra action of $\g[t] \oplus \bC \hbar$ on $\ol \Gr^\la$ (see Section~\ref{subsec: affine grassmannian slices}). 
Similarly to the case of Slodowy slice in Lemma~\ref{lem: pullback of cotangent algebroid to slodowy} and its surrounding discussion, we obtain a surjection $\pi^+ (\g[t]\oplus \bC \hbar) \twoheadrightarrow \Om_{\cW^\la_\mu} \oplus \cO_{\cW^\la_\mu}\hbar$ of Lie algebroids on $\cW^\la_\mu$. We believe that similarly to Theorem~\ref{thm: basis for slice in affine gr}, the simple modules in categories of perverse coherent sheaves, equivariant w.r.t. these Lie algebroids, should be closely related.

On the other hand, similarly to Proposition~\ref{prop: from schubert to slice}, there should be a close relation between categories $\Coh^{\pi^+ (\g[t] \oplus \bC \hbar)}(\cW^\la_\mu)$ and $\Coh^{G(\cO) \rtimes \torus_\hbar}(\Gr^{\mu \leq \cdot \leq \la})$, (here $\Gr^{\mu \leq \cdot \leq \la}$ denotes the union of $G(\cO)$-orbits that intersect $\cW^\la_\mu$ nontrivially).

So, we believe, there should be a close relation between the perverse bases of $K^{\Om_{\cW^\la_\mu} \oplus \cO_{\cW^\la_\mu} \hbar}(\cW^\la_\mu)$ and $K^{G(\cO) \rtimes \torus_\hbar}(\Gr^{\mu \leq \cdot \leq \la})$, which is a part of the perverse coherent basis for the affine Grassmannian.

\subsection{Towards perverse coherent sheaves on double affine Grassmannian} \label{subsec: double affine grass}

This subsection contains no mathematical statements.

As we showed in Proposition~\ref{prop: from schubert to slice}, one can recover the category of (perverse) coherent sheaves on $\ol \Gr^\la$
from its restriction to the maximal transversal slice $\cW^\la_0$, by remembering the additional structure of being $G(\cO) \rtimes \bC^\times$-equivariant on $\ol \Gr^\la$. This approach may be useful for defining the notion of (perverse) coherent sheaves on the double affine Grassmannian, or more generally, the affine Grassmannian of a Kac--Moody group. 
The issue is, while this space is defined only as a prestack (see \cite{BV25}), the transversal slices in it are believed to be the Coulomb branches of quiver gauge theories (see \cite{Fin18}, \cite[3(viii)]{BFN19}). In this subsection, we speculate about which additional structure we should impose on sheaves on Coulomb branches, so that these sheaves should be interpreted as current-group-equivariant sheaves on the double affine Grassmannian.

First, we recall and explain what happens in the finite type case (where the affine Grassmanian is a defined ind-scheme). 
Recall the Lie algebra $\g(\cO)$ action on $\cW_0^\la$ (Section~\ref{subsec: affine grassmannian slices}). We now explain how to construct this $\g(\cO)$-action without appealing to an embedding into the projective variety $\ol \Gr^\la$.

We explain how to define the action of $\g(\cO)$ on $\cW_0$, from which the action on $\cW^\la_0$ comes by restriction. Equivalently, we need to define an action of $U(\g[t])$ on $\bC[\cW_0]$. Note that $U(\g[t])$ and $\bC[\cW_0]$ are two degenerations (commutative and cocommutative) of the Yangian $Y(\g)$, and the desired action is nothing but the (infinitesimal) dressing action for the Manin triple $(\g[t], t^{-1}\g[[t^{-1}]], \g((t^{-1})))$. One way to define it is as follows. The Hopf algebra $Y(\g)$ admits the action on itself by conjugation; its dequantization yields the dressing action $U(\g[t]) \curvearrowright \bC[\cW_0]$,  see \cite[Theorem 3.10, Definition 3.11]{Lu93}.

The key feature used in this construction is the coproduct structure on $Y(\g)$, needed to define the adjoint action (we expect that if one takes $\cW^\la_\mu$ instead of $\cW^\la_0$, one would need the coproduct for shifted Yangians of \cite{FKPRW18}). The substitute for coproduct for quantum Coulomb branches beyond the finite type case has not yet been constructed, but is expected to exist by experts in the area. One can expect that once defined, it may be used to generalize the construction above to arbitrary Kac--Moody type.

In parallel with Theorem~\ref{thm: basis for slice in affine gr}, we expect that perverse coherent sheaves on Coulomb branches of quiver gauge theories, as defined in Section~\ref{subsec: definition of the category}, should be related to sheaves with equivariance conjecturally described above.

Finally, let us mention that if such an approach is possible, it could be applied not only to coherent sheaves, but also to any reasonable category of sheaves — in particular, to equivariant constructible perverse sheaves — and might be used to better understand the geometric Satake correspondence for Kac--Moody groups \cite[3(viii)]{BFN19}, \cite{BV25}.

\end{document}